\documentclass{article}
\usepackage[utf8]{inputenc}
\usepackage{amsmath}
\usepackage{amssymb}
\usepackage{mathtools}
\usepackage{tikz-cd}

\usepackage{quiver}

\usepackage{tikz}
\usepackage[margin=3.5cm]{geometry}
\usepackage[T1]{fontenc}
\usepackage{palatino}
\usepackage{graphicx}
\usepackage{amsthm}
\usepackage{emptypage}
\usepackage{lipsum}
\usepackage{enumerate}

\usepackage{pdfpages}
\usepackage{titlepic}

\theoremstyle{plain}
\newtheorem{theorem}[equation]{Theorem}
\newtheorem*{theorem*}{Theorem}

\newtheorem*{conjecture*}{Conjecture}

\newtheorem{lemma}[equation]{Lemma}
\newtheorem{proposition}[equation]{Proposition}
\newtheorem*{proposition*}{Proposition}
\newtheorem{corollary}[equation]{Corollary}

\theoremstyle{definition}
\newtheorem{definition}[equation]{Definition}
\newtheorem*{definition*}{Definition}
\newtheorem{construction}[equation]{Construction}

\theoremstyle{remark}
\newtheorem{remark}[equation]{Remark}
\newtheorem{example}[equation]{Example}

\usepackage{enumitem}
\usepackage{hyperref}

\usepackage{chngcntr}
\counterwithin{equation}{section}
\swapnumbers

\def\mainmatter{%
    \pagenumbering{arabic}
    \setcounter{page}{1}
    \setcounter{section}{0}
    \renewcommand{\thesection}{\arabic{section}}
}%

\usepackage[
backend=biber,
style=alphabetic
]{biblatex}
\DeclareNameAlias{default}{last-first}

\newcommand{\id}{\mathrm{id}}

\newcommand{\Hom}{\mathrm{Hom}}
\newcommand{\Aut}{\mathrm{Aut}}

\newcommand{\catname}[1]{{\normalfont\textbf{#1}}}

\newcommand{\Vecbun}{\catname{vect}}

\newcommand{\Octalg}{\catname{octalg}}
\newcommand{\Octalgobj}{\catname{octobj}}

\newcommand{\proj}{\catname{proj}}

\newcommand{\GL}{\mathrm{GL}}

\newcommand{\SO}{\mathrm{SO}}
\newcommand{\Orth}{\mathrm{O}}

\newcommand{\BC}{{\mathbb{C}}}

\newcommand{\BO}{{\mathbb{O}}}
\newcommand{\BP}{{\mathbb{P}}}

\newcommand{\BR}{{\mathbb{R}}}
\newcommand{\BS}{{\mathbb{S}}}

\newcommand{\BZ}{{\mathbb{Z}}}

\newcommand{\CC}{{\mathcal C}}

\DeclareFontFamily{OT1}{rsfs}{}
\DeclareFontShape{OT1}{rsfs}{n}{it}{<-> rsfs10}{}
\DeclareMathAlphabet{\curly}{OT1}{rsfs}{n}{it}

\newcommand\Id{\operatorname{Id}}

\title{$G_2$-structures as Octonion Algebras}
\author{Isak Sundelius}
\date{April 2026}

\begin{document}

\maketitle

\mainmatter

\begin{abstract}
    We define the category of $G_2$-structures over a Riemannian 7-manifold $M$ and present an isomorphism between this category and a full subcategory of the category of octonion algebras over the ring of smooth real-valued functions $C^\infty(M)$ of the same manifold $M$. A classification of $G_2$-structures in the same metric class is shown to agree with a parametrisation of octonion algebras with isometric norm. A short study of the local structure of octonion algebras over $C^\infty(M)$ shows similarities to the theory of octonion algebras over $\BR$. Thus, many of the results on real octonion algebras, and in general octonion algebras over rings, can be applied to $G_2$-structures viewed as octonion algebras, under the aforementioned isomorphism of categories.
\end{abstract}

\tableofcontents

\section*{Introduction}

The study of $G$-structures is a rich area of geometry which yields a clear division of different subfields of geometry, from the perspective of differential geometry. For a smooth manifold $M$ with tangent bundle $TM$, it is natural to study the frame bundle $F(TM)$, the bundle of local basis choices for the tangent bundle, and its reductions. In particular, the frame bundle is a $\GL_n(\BR)$-principal bundle, with $n=\dim(M)$. This means that for every Lie subgroup $G\subseteq \GL_n(\BR)$ we can inspect whether such a manifold $M$ admits a reduction to (i.e., is everywhere patched together by elements of) the subgroup $G$. This provides deep insight into the geometry of the underlying manifold in the package of a Lie group and, for a finer study of the geometry, one can define the intrinsic torsion of a $G$-structure.

An important example of this is is when an $n$-dimensional smooth manifold $M$ admits an $O_n(\BR)$-reduction, in which case it admits a Riemannian metric, hence the structure of a Riemannian manifold. Thus, Riemannian geometry is the study of manifolds with $O_n(\BR)$-structure. Another important example is $\GL_n(\BC)$-structures on a $2n$-dimensional smooth manifold $M$, which are called almost complex structures, and which has intrinsic torsion corresponding to the Nijenhuis tensor. When this tensor vanishes, the given $\GL_n(\BC)$-structure is a complex structure, in which case the manifold is called a complex manifold. The same goes for symplectic manifolds, Kähler manifolds and so on.

The group $G_2$ is one of the exceptional simple Lie groups over $\BR$, and has a compact and non-compact version, denoted $G_2$ and $G_2^*$ respectively. Both are Lie subgroups of $\GL_7(\BR)$ and thus one may study $G_2$-structures, which can then only be admitted by manifolds of dimension 7. Manifolds that admit a $G_2$-structure have been studied thoroughly, for an overview see \cite{Bry}. They are of particular importance in string and M-theory, \cite{PhysRevD.104.126014}, \cite{acharya_m_2004}. 

The group $G_2$ also appears, as an algebraic group, in the study of composition algebras. A composition algebra over a field $k$ is, roughly, a unital algebra over $k$ that is not necessarily commutative or associative and that is endowed with a quadratic form that commutes with the multiplication. Two composition algebras are called isomorphic if there is an isomorphism of their underlying vector spaces that preserves their multiplications, identity elements and quadratic forms. In particular, the group $G_2$ appears as the automorphism group of a composition algebra of rank 8 over a field, a so-called octonion algebra. A source for this theory is \cite{SV}. A foundational result in the study of composition algebras over fields is that they only exist in dimension 1, 2, 4 and 8, a fact sometimes called the Hurwitz theorem. The latter three have a variant that is division and one which is not (in which case it is called split). Over the real numbers the composition algebras which are division are the real numbers, the complex numbers, the quaternions and the octonions, also called the Cayley numbers, respectively. In fact, it is the existence of composition algebras that guarantees the existence of cross-products in dimension one less than the associated composition algebra, i.e., 0, 1, 3 and 7 (and these are the corresponding products restricted to the imaginary part). The theory of composition algebras over fields is classical and goes back to Cayley. The connection between $G_2$-structures and octonion algebras goes back to Fernández and Gray \cite{fernandez_riemannian_1982} in 1982, who use the Cayley product and the associated cross product to construct torsion classes of $G_2$-structures. The fact that the classical theory of octonion algebras is well-understood, and has a history of application to $G_2$-structures, is a key motivation for the perspective of this article. One of the goals is to create a situation in which much of the modern theory, as well as the classical theory, of octonion algebras can be applied. This situation is reached and described in terms of modern tools.

Composition algebras, and in effect the octonions, have been defined in a more general context, namely as algebras over an arbitrary ring instead of a field. In this case, the multiplication of an octonion algebra is no longer necessarily determined by its quadratic form, which is the case over fields and, moreover, local rings, see \cite{bix}. Still, the (algebraic) group $G_2$ plays a major role in the classification of octonion algebras over a given ring with a given quadratic form.

Sergei Grigorian has continued the algebraic study of $G_2$-structures by constructing certain vector bundles from $G_2$-structures. The main source for his algebraic treatment is \cite{Gri}. Sections of these so-called octonion bundles are used to provide an algebraic expression for the different $G_2$-structures which have the same associated metric.

This article aims to combine this with the study of octonion algebras over rings by providing an alternative definition of octonion bundles, characterising them as certain octonion algebras over $C^\infty(M)$. This is done by identifying vector bundles over $M$ with projective modules over $C^\infty(M)$, and studying the structure on fibres of such bundles we get that much of the theory of octonion algebras over rings, and, locally, the classical theory of octonion algebras over $\BR$, can be applied. These definitions and results are made by use of category theory, so we provide definitions of various categories. This also makes it possible to upgrade previously known 1:1-correspondences to equivalences and isomorphisms of categories, most importantly between $G_2$-structures and octonion bundles. Among other facts, this gives us an expression for $G_2$-metrics on fibres.

We first define octonion algebra objects in the category of vector bundles as a vector bundle with multiplication, identity and bilinear form. An octonion bundle over a Riemannian manifold of dimension 7 is then an octonion algebra object that satisfies some additional compatibility conditions with the underlying manifold. These are definitions \ref{octalgobjdef} and \ref{octbundef} respectively. Due to there being an equivalence of categories $\Vecbun(M)\simeq\proj_{C^\infty(M)}$ between the category of vector bundles and that of finitely generated projective modules over $C^\infty(M)$, we get that the category of octonion algebra objects in $\Vecbun(M)$ is equivalent to the category of octonion algebras over the ring $C^\infty(M)$. The category of octonion bundles can then be considered a full subcategory of the category of octonion algebras over $C^\infty(M)$.

Despite the fact that the ring $C^\infty(M)$ lacks many nice algebraic properties, the equivalence between octonion algebra objects and octonion algebras gives us two perspectives on localisation. On the side of octonion algebra objects, we get that the data over a point on $M$ is a real octonion algebra, in the classical sense and as presented by Springer and Veldkamp in \cite{SV}. This can also be seen as the localisation of the octonion algebra to the residue field over that point. Thus, many results from the classical theory can be lifted to octonion algebra objects and, in effect, octonion bundles. Some such results for this article are the relations appearing in proposition \ref{relationslemma}.

The main result of this article is that the category of $G_2$-structures (as defined in this paper) is equivalent to a full subcategory of the category of octonion algebras over the ring $C^\infty(M)$:
\begin{proposition*}[Proposition \ref{equivalencetheorem}]
    There is an isomorphism of categories
    \begin{align*}
        \catname{G2str}(M)\overset{\sim}{\longrightarrow} \catname{octbun}_{C^\infty(M)}.
    \end{align*}
\end{proposition*}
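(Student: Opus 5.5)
We construct explicit functors in both directions and check that they are mutually inverse on the nose, not merely up to natural isomorphism. The working definitions are as follows. A $G_2$-structure on the Riemannian 7-manifold $(M,g)$ is a positive 3-form $\varphi$ whose associated metric $g_\varphi$ equals $g$ (with orientation fixed), and morphisms are the bundle isometries of $TM$ (equivalently, orientation-compatible maps) pulling one 3-form back to the other. An octonion bundle is an octonion algebra object $(\mathcal O,\cdot,1,\langle\,,\rangle)$ in $\Vecbun(M)$ (Definition \ref{octalgobjdef}) together with the compatibility data of Definition \ref{octbundef}. That data should amount to an isometric identification of the imaginary part $1^\perp$ with $(TM,g)$. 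Morphisms are algebra-object morphisms respecting this identification.

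\textbf{Functor $F\colon \catname{G2str}(M)\to\catname{octbun}_{C^\infty(M)}$.} Given $\varphi$, set $\mathcal O_\varphi=\underline{\BR}\oplus TM$ with unit $(1,0)$ and bilinear form $\langle (a,u),(b,v)\rangle = ab+g(u,v)$. Define the product
\begin{align*}
(a,u)(b,v)=\bigl(ab-g(u,v),\ av+bu+u\times_\varphi v\bigr),
\end{align*}
where $\times_\varphi$ is the cross product defined by $g(u\times_\varphi v,w)=\varphi(u,v,w)$. This is a smooth bundle map because $\varphi$ and $g$ are smooth. Fibrewise, at each $p$ there is a frame identifying $\varphi_p$ with the standard form $\varphi_0$, so the fibre is the classical real octonion algebra of \cite{SV}. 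The algebra-object axioms (multiplicativity of the norm, unitality) therefore hold pointwise, and hence globally. Via $\Vecbun(M)\simeq\proj_{C^\infty(M)}$ this gives an octonion algebra over $C^\infty(M)$ lying in the full subcategory. A morphism $f\colon(M,\varphi)\to(M,\varphi')$, an isometry with $f^*\varphi'=\varphi$, goes to $\id\oplus f$. It preserves products because $f$ intertwines the cross products, which follows from $g$-isometry together with $f^*\varphi'=\varphi$. Functoriality is immediate.

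\textbf{Functor $H$ in the other direction.} Given an octonion bundle, use the identification $\operatorname{Im}\mathcal O\cong TM$ and set $\varphi(u,v,w)=\langle uv,w\rangle$ for imaginary $u,v,w$. By the relations of Proposition \ref{relationslemma} applied fibrewise (alternativity and $\langle uv,w\rangle=-\langle v,uw\rangle$ for imaginary elements), this is alternating. Smoothness follows because the multiplication is a bundle map. To show $\varphi$ is a positive 3-form with $g_\varphi=g$, work pointwise. Each fibre is a real octonion algebra whose norm restricted to the imaginary part is positive definite, so it is the division octonions, not the split ones; this uses the compatibility condition with the Riemannian metric. Choosing a Cayley basis identifies $\varphi_p$ with $\varphi_0$ via an isometry, and this gives $g_{\varphi}=g$ (with the correct orientation, which the compatibility data must fix). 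On morphisms, a morphism restricts to the imaginary parts, since it preserves $1$ and the form, and therefore pulls back the 3-forms.

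\textbf{Inverse check.} We have $H F=\Id$ since $\langle u\times_\varphi v,w\rangle=\varphi(u,v,w)$. We have $F H=\Id$ because the product on $\underline\BR\oplus\operatorname{Im}\mathcal O$ is determined by the imaginary products $uv=-\langle u,v\rangle 1+\operatorname{Im}(uv)$, which is a fibrewise identity from Proposition \ref{relationslemma}. For $FH$ to be an identity rather than an isomorphism, the objects of $\catname{octbun}$ must be presented with the splitting $\underline\BR\oplus TM$ as part of their data. If Definition \ref{octbundef} instead only gives an isometry $1^\perp\cong TM$, one uses the canonical decomposition $\mathcal O=\BR1\oplus 1^\perp$, and the claim becomes that this reconstruction is the identity up to that structural identification.

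\textbf{Main obstacle.} The hard step is in $H$: showing that the algebraic data forces $\varphi$ to be a positive 3-form inducing exactly $g$ and the given orientation, rather than a split form or the metric with opposite orientation. The plan is to reduce everything to the fibre, invoke the classical uniqueness of the real division octonion algebra (all its Cayley triples are $G_2$-conjugate), and use positive-definiteness of the norm to exclude $G_2^*$. Orientation is a sign ambiguity, since $\varphi\mapsto-\varphi$ flips it. I expect Definition \ref{octbundef} to include an orientation condition, and I would verify that condition fibrewise by comparing $\varphi\wedge\ast\varphi$ with the volume form.
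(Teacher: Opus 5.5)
Your functors and both verifications are the paper's. First, $\id\oplus f$ pushes $m_\varphi$ to $m_{\varphi'}$. Second, an octonion-bundle isomorphism fixes $e$ and preserves $b$, so it restricts to $e^\perp=\Gamma(TM)$ and pushes $\tfrac12 b(m(-\otimes-)\otimes-)$ forward. Positivity is checked fibrewise against the standard division octonions, as in Proposition \ref{varphi_Cconstr}. Your explicit check $FH=\Id$ is also fine, since Definition \ref{octbundef} does fix $\Lambda^0\oplus TM$, the canonical $\mathsf e$ and $\mathsf b_g$ as data.

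The gap is that you prove the statement for the wrong categories. In Definition \ref{g2catprinc} (equivalently \ref{g2cat3form}) the objects are \emph{all} $G_2$-structures on $M$, with no metric or orientation fixed. A morphism $\varphi\to\varphi'$ is \emph{any} automorphism $f$ of $\Gamma(TM)$ with $f_*\varphi=\varphi'$. Likewise, $\catname{octbun}_{C^\infty(M)}$ (Remark \ref{catremark}) contains octonion bundles over $(M,g)$ for every metric $g$. Your morphism argument rests on ``$f$ is a $g$-isometry'', so it does not cover morphisms between different metric classes. The missing fact, used implicitly by the paper, is $g_{f_*\varphi}=f_*g_\varphi$. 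With it, $f$ is an isometry $(TM,g_\varphi)\to(TM,g_{\varphi'})$, your intertwining argument goes through verbatim, and $\id\oplus f$ carries $b_{g_\varphi}$ to $b_{g_{\varphi'}}$.

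Fixing an orientation is worse than unnecessary: it makes the bijection on objects false, and Definition \ref{octbundef} contains no orientation condition. If $(\Lambda^0\oplus TM,m,e,b_g)$ is an octonion bundle, so is its opposite $m^{\mathrm{op}}(x,y)=m(y,x)$. The identity of Definition \ref{octalgobjdef} for $m^{\mathrm{op}}$ is the one for $m$ with the $x_i$ and $y_i$ interchanged (using symmetry of $b$). The norm is unchanged, so the fibres are still non-split. Its 3-form is $\tfrac12 b_g(vu,w)=-\varphi(u,v,w)$. Since $-\varphi_0=(-\id)^*\varphi_0$ with $-\id$ orthogonal and $\det(-\id)=-1$ on $\BR^7$, this has the same metric and the opposite orientation. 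So with an orientation fixed only on the $G_2$ side, your $F$ misses half the octonion bundles over $(M,g)$, and your ``main obstacle'' cannot be resolved. In the paper's setup it never arises: a definite 3-form determines its own metric and orientation, so positivity is all that is needed. In fact $\varphi\cong-\varphi$ via $-\id$, which matches the conjugation isomorphism $\id\oplus(-\id)\colon C_\varphi\to C_\varphi^{\mathrm{op}}$.

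Finally, a normalization point. Octonion bundles carry $\mathsf b_g=2ad+2g$, and your $\langle\,,\rangle=ad+g$ violates the identity of Definition \ref{octalgobjdef}; try $x_i=y_i=1$. For an on-the-nose isomorphism, use $b_g$ and $\varphi=\tfrac12 b_g(uv,w)$.
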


The article is structured as follows: In section \ref{prelim} we present the preliminaries for the article, recalling important results. In particular, subsection \ref{VBasmod} presents the results from the theory of smooth vector bundles, mainly to characterise them as projective modules over the ring $C^\infty(M)$ of real-valued smooth functions on $M$. The main source for this subsection is \cite{Nes}.

In the two subsequent sections $G_2$-structures and octonion algebras are presented. The main source for the definition of and results on $G_2$-structures is \cite{Bry}. It is most importantly used to characterise $G_2$-structures both as reductions of the structure group and as 3-forms satisfying certain conditions in terms of $G_2$. A definition of category of $G_2$-structures is also presented, in particular in a way that preserves the main two characterisations of $G_2$-structures.

Both the classical theory of octonion algebras over fields as well as the more modern theory of octonion algebras over rings are used later in the article. The main source for the former is \cite{SV} and the main sources for the latter are \cite{LPR}, \cite{garibaldi_albert_2024} and \cite{AlGi}. In particular, the source for the parametrisation of octonion algebras over rings, with isometric quadratic forms, is \cite{AlGi}. Similar to the preceding subsection, a notion of category of octonion algebras (over a ring) is presented.

In section \ref{obsect} the equivalence between $G_2$-structures and certain octonion algebras is made precise. This leads to the definition of octonion algebra objects over a manifold $M$. Subsection \ref{octbunintrinsic} provides a description of octonion bundles without reference to a $G_2$-structure.

In section \ref{twistsection} we use the results of section \ref{obsect} to produce a proof of the equality of the possible twists of $G_2$-structures. This proof explicitly uses octonion algebras over $c^\infty(M)$. It also relies on the results for octonion bundles that come from octonion algebras over $\BR$, mainly \ref{relationslemma}.

In section \ref{generalisationsection} we generalise the results of the above sections such that any octonion algebra over $C^\infty(M)$ for $M$ a smooth manifold can be realised as some kind of $G_2$- or related structure.

\subsection*{Notation and conventions}\label{conventions}

\begin{itemize}
    \item $M$: A connected smooth manifold.
    \item $C^\infty(M)$: The ring of smooth $\BR$-valued functions on a connected smooth manifold $M$.
    \item $\Vecbun(M)$: The category of smooth vector bundles of finite rank over $M$.
    \item $\Gamma(F\overset{\pi}{\to}M)$: The global sections of a vector bundle $F\overset{\pi}{\to}M$.
    \item $\proj_R$: The category of finitely generated projective modules over a commutative ring $R$.
    \item $\Octalg_R$: The category of octonion algebras over $R$, \ref{octalgcatdef}.
    \item $\catname{G2str}(M)$: The category of $G_2$-structures over a Riemannian manifold $M$, \ref{g2catprinc}.
    \item $\catname{def3form}$: The category of definite 3-forms over a Riemannian manifold $M$, \ref{g2cat3form}.
    \item $\catname{octobj}_{\Vecbun(M)}$: The category of octonion algebra objects in the category $\Vecbun(M)$ of vector bundles over a smooth manifold $M$, \ref{octalgobjcatdef}, \ref{octalgobjdef}.
\end{itemize}

We will sometimes not distinguish between quadratic forms and their associated bilinear forms, when the translation is clear from the context. An algebra is assumed to be unital but is not assumed to be commutative or associative. Rings are assumed to be unital, associative and commutative.

\section{Preliminaries on Vector Bundles}\label{prelim}

\subsection{Vector Bundles as Modules}\label{VBasmod}

We recall some foundational results about smooth vector bundles over smooth manifolds. We will let $M$ denote a smooth connected manifold. The main result is an equivalence of categories $\Vecbun(M)\overset{\sim}{\to}\proj_{C^\infty(M)}$, making it possible to characterise geometric structures as algebraic ones. The main source for this section is \cite{Nes}.

We begin by recalling that a global section of a smooth vector bundle $F\overset{\pi}{\to} M$ over $M$ is a smooth map $s:M\to F$ such that $\pi\circ s=\id_M$. Denote by $\Gamma(F)$ the set of global sections of $F$ over $M$. This set has the natural structure of a $C^\infty(M)$-module. For instance, if $\Lambda^0$ denotes the trivial bundle $M\times \BR$ over $M$, then $\Gamma(\Lambda^0)=C^\infty(M)$. Furthermore, $\Gamma$ sends morphisms of vector bundles to morphisms of modules, so it determines a functor. The first two of the following theorems are stated without proof.

\begin{theorem}\cite[Theorem 12.29]{Nes}\label{nesth1}
    For any pair $F$ and $G$ of vector bundles over $M$, the functor $\Gamma$ determines a 1:1-correspondence
    \begin{align*}
        \Hom_{\Vecbun(M)}(F, G)\cong\Hom_{C^\infty(M)}(\Gamma(F), \Gamma(G)).
    \end{align*}
\end{theorem}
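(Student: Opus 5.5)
To prove that $\Gamma$ induces a bijection $\Hom_{\Vecbun(M)}(F,G)\cong\Hom_{C^\infty(M)}(\Gamma(F),\Gamma(G))$, I would first check that the map $\varphi\mapsto \Gamma(\varphi)$, $\Gamma(\varphi)(s)=\varphi\circ s$, is well defined and $C^\infty(M)$-linear. This is immediate, since $\varphi$ is fibrewise linear, so $\varphi\circ(fs)=f(\varphi\circ s)$. The substance is then injectivity and surjectivity, both of which rest on two facts about sections:
\begin{itemize}
\item every vector $v\in F_x$ extends to a global section $s$ with $s(x)=v$ (take a local frame near $x$ and multiply by a bump function);
\item the value $s(x)$ of a section is detected algebraically, namely
\begin{align*}
\{s\in\Gamma(F): s(x)=0\}=\mathfrak m_x\Gamma(F),
\end{align*}
where $\mathfrak m_x=\{f\in C^\infty(M): f(x)=0\}$.
\end{itemize}

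Injectivity is easy. If $\varphi\circ s=\psi\circ s$ for all sections $s$, then by the extension fact $\varphi$ and $\psi$ agree on every vector of every fibre, so $\varphi=\psi$.

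For surjectivity, I start from a module map $\alpha:\Gamma(F)\to\Gamma(G)$ and define $\varphi_x:F_x\to G_x$ by $\varphi_x(v)=\alpha(s)(x)$ for any section $s$ with $s(x)=v$. The key step, and the main obstacle, is showing that this is independent of the choice of $s$. By linearity it suffices to show that $s(x)=0$ implies $\alpha(s)(x)=0$, and this is exactly where the identity $\{s(x)=0\}=\mathfrak m_x\Gamma(F)$ is needed. To prove that identity, I would choose a local frame $e_1,\dots,e_r$ on a neighbourhood $U$ of $x$ and a bump function $\rho$ supported in $U$ with $\rho\equiv1$ near $x$. Then
\begin{align*}
s=\rho^2 s+(1-\rho^2)s=\sum_i (\rho f_i)(\rho e_i)+(1-\rho^2)s,
\end{align*}
where $s=\sum_i f_ie_i$ on $U$, each $\rho e_i$ is a global section, and the coefficients satisfy $\rho f_i\in\mathfrak m_x$ and $1-\rho^2\in\mathfrak m_x$. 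Applying $\alpha$ and using $C^\infty(M)$-linearity writes $\alpha(s)$ as a combination of sections with coefficients in $\mathfrak m_x$, and evaluating at $x$ gives $\alpha(s)(x)=0$.

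It then remains to check three things:
\begin{itemize}
\item $\varphi_x$ is linear, which is clear from the linearity of $\alpha$.
\item $\varphi$ is smooth. On $U$, in the frame $\rho e_i$ (which equals $e_i$ near $x$), $\varphi$ is represented by the matrix whose columns are the smooth sections $\alpha(\rho e_i)$ expressed in a local frame of $G$. Smoothness is local, so this suffices.
\item $\Gamma(\varphi)=\alpha$, which holds by construction.
\end{itemize}

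Functoriality in composition is automatic, so the correspondence is natural.
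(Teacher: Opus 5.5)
Your proof is correct and complete. The paper gives no proof of this statement; it explicitly quotes it without proof from \cite{Nes}. Your argument is the standard one used there: the lemma $\{s\in\Gamma(F): s(x)=0\}=\mathfrak m_x\Gamma(F)$ identifies $F_x$ with $\Gamma(F)/\mathfrak m_x\Gamma(F)$, a $C^\infty(M)$-linear map then descends to these quotients fibre by fibre, and smoothness is checked in local frames. One small imprecision in that last check: the matrix description holds on the neighbourhood of $x$ where $\rho\equiv 1$, not on all of $U$, which is evidently what you mean.

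Your key lemma is also exactly what the paper needs later, in Proposition~\ref{Fibrelocalprop}. There, $\kappa(\mathfrak m_p)\cong C^\infty(M)/\mathfrak m_p$, so $\Gamma(F)\otimes_{C^\infty(M)}\kappa(\mathfrak m_p)\cong\Gamma(F)/\mathfrak m_p\Gamma(F)$. Injectivity of the evaluation map to $F_p$ is then equivalent to your inclusion $\{s: s(p)=0\}\subseteq\mathfrak m_p\Gamma(F)$. The argument given in that proposition only notes that certain elements map to zero, which does not by itself establish injectivity.
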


\begin{theorem}\cite[Theorem 12.32]{Nes}\label{nesth2}
    Suppose $P$ is a $C^\infty(M)$-module. Then $P$ is isomorphic to the module of sections $\Gamma(F)$ of a smooth vector bundle $F$ over $M$ if and only $P$ is finitely generated and projective.
\end{theorem}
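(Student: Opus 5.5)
The statement to prove is Theorem \ref{nesth2}: a $C^\infty(M)$-module $P$ is isomorphic to $\Gamma(F)$ for some smooth vector bundle $F$ if and only if $P$ is finitely generated and projective. I would treat the two directions separately. Throughout I use Theorem \ref{nesth1}, the full faithfulness of $\Gamma$, together with two standard geometric inputs: Whitney embedding, and the existence of a finite trivialising cover of $M$.

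For the forward direction, let $F\to M$ have rank $k$ (constant, since $M$ is connected). The key step is to embed $F$ as a direct summand of a trivial bundle $M\times\BR^N$.
\begin{enumerate}[label=(\roman*)]
    \item By dimension theory, $M$ admits a cover by finitely many open sets $U_1,\dots,U_m$, at most $\dim M+1$ of them, over each of which $F$ is trivial. One obtains these by colouring a countable trivialising cover subordinate to a good refinement, and taking disjoint unions within each colour.
    \item Choose a partition of unity $\{\rho_i\}$ subordinate to this cover and trivialisations $\phi_i\colon F|_{U_i}\to U_i\times\BR^k$.
    \item Define $\iota\colon F\to M\times\BR^{km}$ by $\iota(v)=(\rho_i\phi_i(v))_i$. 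This is a smooth, fibrewise injective bundle map.
    \item Pull back the standard inner product along $\iota$ and take the orthogonal complement $F^\perp$. This gives $F\oplus F^\perp\cong M\times\BR^{km}$.
\end{enumerate}
Since $\Gamma$ is additive, $\Gamma(F)\oplus\Gamma(F^\perp)\cong C^\infty(M)^{km}$. Hence $\Gamma(F)$ is a direct summand of a finite free module, so it is finitely generated and projective. The subtle point is the finiteness of the cover. Without it we only get a countable cover, which would yield a non-finitely generated presentation, so this is where the real content of the forward direction lies.

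For the converse, suppose $P\oplus Q\cong C^\infty(M)^n$. Let $e\colon C^\infty(M)^n\to C^\infty(M)^n$ be the idempotent projection onto $P$. Since $C^\infty(M)^n=\Gamma(M\times\BR^n)$, Theorem \ref{nesth1} gives a bundle endomorphism $E$ of $M\times\BR^n$ with $\Gamma(E)=e$. Because $\Gamma$ is faithful and $\Gamma(E^2)=e^2=e$, we get $E^2=E$. So $E$ is a smooth map $M\to\mathrm{Mat}_n(\BR)$ whose values are idempotents.

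The main obstacle is to show that $F:=\operatorname{im}E$ is a smooth subbundle. I would proceed as follows.
\begin{enumerate}[label=(\roman*)]
    \item The rank of an idempotent equals its trace, and $\operatorname{tr}E$ is continuous and integer-valued, so the rank is locally constant. It is in fact constant since $M$ is connected.
    \item Near a point $x_0$, choose vectors $w_1,\dots,w_r$ with $E(x_0)w_j$ a basis of $\operatorname{im}E(x_0)$.
    \item The sections $x\mapsto E(x)w_j$ remain linearly independent near $x_0$, and by the rank count they span $\operatorname{im}E(x)$. They therefore give a local smooth frame, so $F$ is a smooth subbundle.
\end{enumerate}

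Finally I identify $P$ with $\Gamma(F)$. A section $s$ of $M\times\BR^n$ lies in $\Gamma(F)$ exactly when $Es=s$, that is, when $es=s$. This says $\Gamma(F)=\operatorname{im}e\cong P$, which completes the proof.
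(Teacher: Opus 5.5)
The paper gives no proof of its own here. It quotes the result from Nestruev and attributes it to Swan, and your argument is a correct version of the standard Serre--Swan proof found in those sources: a finite trivialising cover and a partition of unity exhibit $F$ as a direct summand of a trivial bundle, and conversely $P\cong\operatorname{im}e$ is the module of sections of the image of a smooth idempotent $E\colon M\to\mathrm{Mat}_n(\BR)$ of constant rank $\operatorname{tr}E$.

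Only small repairs are needed.
\begin{itemize}
    \item Colouring a countable cover gives finitely many colours only if each member meets a uniformly bounded number of others. What you really need is the dimension-theoretic fact that the trivialising cover has a refinement splitting into $\dim M+1$ families of pairwise disjoint open sets, for example the barycentric stars of a fine triangulation.
    \item In step (iv) you want the orthogonal complement of the subbundle $\iota(F)$ with respect to the standard inner product on $M\times\BR^{km}$. An inner product pulled back to $F$ does not give this complement.
    \item The Whitney embedding you announce is never used. Immersing the total space of $F$ in some $\BR^N$ and differentiating along the zero section would give a fibrewise injective bundle map $F\to M\times\BR^N$ directly, which bypasses the dimension theory altogether.
\end{itemize}
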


\begin{theorem}\cite[12.33. Equivalence of the two categories]{Nes}
     There is an equivalence of the category $\Vecbun(M)$ of vector bundles over the manifold $M$ with the category $\proj_{C^\infty(M)}$ of finitely generated projective $C^\infty(M)$-modules.
\end{theorem}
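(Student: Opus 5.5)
The plan is to show that the section functor $\Gamma:\Vecbun(M)\to\proj_{C^\infty(M)}$ is an equivalence, using the standard criterion: a functor is an equivalence of categories if and only if it is fully faithful and essentially surjective. Both properties are already available from the two preceding results, so the remaining work is to make sure the functor really lands in $\proj_{C^\infty(M)}$ and that the two theorems give exactly these properties.

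First I check that $\Gamma$ is well defined as a functor into $\proj_{C^\infty(M)}$. On objects it sends a vector bundle $F$ to $\Gamma(F)$, which is a $C^\infty(M)$-module under pointwise multiplication. The ``only if'' direction of Theorem \ref{nesth2}, applied to $P=\Gamma(F)$ itself, shows that this module is finitely generated and projective. On morphisms, a bundle map $\varphi:F\to G$ is sent to the map $s\mapsto\varphi\circ s$. This map is $C^\infty(M)$-linear because $\varphi$ is fibrewise linear. Functoriality, meaning compatibility with identities and composition, is immediate from associativity of composition of maps.

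Next, full faithfulness is precisely Theorem \ref{nesth1}. That theorem says that for every pair $F,G$ the map $\Hom_{\Vecbun(M)}(F,G)\to\Hom_{C^\infty(M)}(\Gamma(F),\Gamma(G))$ induced by $\Gamma$ is a bijection.

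Finally, essential surjectivity is the ``if'' direction of Theorem \ref{nesth2}. Every finitely generated projective $C^\infty(M)$-module $P$ is isomorphic to $\Gamma(F)$ for some smooth vector bundle $F$.

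With both properties in hand, the standard criterion gives that $\Gamma$ is an equivalence. If an explicit quasi-inverse is wanted, I would use the axiom of choice: for each $P$ in $\proj_{C^\infty(M)}$ choose a bundle $F_P$ and an isomorphism $\eta_P:P\to\Gamma(F_P)$. On morphisms, send $f:P\to Q$ to the unique bundle map $\psi:F_P\to F_Q$ with $\Gamma(\psi)=\eta_Q\circ f\circ\eta_P^{-1}$; this exists and is unique by Theorem \ref{nesth1}. The isomorphisms $\eta_P$ then form a natural isomorphism $\Id\Rightarrow\Gamma\circ\Phi$, where $\Phi$ denotes this quasi-inverse. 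The other natural isomorphism $\Id\Rightarrow\Phi\circ\Gamma$ comes from lifting $\eta_{\Gamma(F)}$ through the full faithfulness of $\Gamma$.

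I do not expect a real obstacle here, since the genuine content, namely the Serre–Swan-type statements, is packaged in Theorems \ref{nesth1} and \ref{nesth2}. The only point needing care is naturality of the chosen isomorphisms. This follows formally from the uniqueness part of Theorem \ref{nesth1}, because any two bundle maps with the same image under $\Gamma$ coincide.
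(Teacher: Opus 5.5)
Your proposal is correct and takes the same route as the paper. $\Gamma$ lands in $\proj_{C^\infty(M)}$ by the ``only if'' direction of Theorem \ref{nesth2}, it is fully faithful by Theorem \ref{nesth1}, and it is essentially surjective by the ``if'' direction of Theorem \ref{nesth2}. Your explicit quasi-inverse simply spells out the standard fully-faithful-plus-essentially-surjective criterion that the paper invokes implicitly.
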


\begin{proof}
    Theorem \ref{nesth1} and theorem \ref{nesth2}, together with the fact that every module of sections $\Gamma(F)$ of a vector bundle $F$ over $M$ is a finitely generated projective $C^\infty(M)$-module, give us that $\Gamma$ is an equivalence.
\end{proof}

This fact was proven by Swan in 1962, \cite{swan_vector_1962}, and is known as the Serre--Swan theorem or, often in this setting, just Swan's theorem. This equivalence between categories preserves the tensor product and homomorphisms:

\begin{theorem}\cite[12.39 Theorem.]{Nes}\label{sectionstensor}
    The map
    \begin{align*}
        \Gamma(\pi)\otimes\Gamma(\eta)&\to\Gamma(\pi\otimes\eta)\\
        s\otimes t&\mapsto [p\mapsto s(p)\otimes t(p)],
    \end{align*}
    is an isomorphism for any two vector bundles $\pi$ and $\eta$ over the manifold $M$.
\end{theorem}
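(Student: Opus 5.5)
The statement to prove is Theorem \ref{sectionstensor}: the map $\Gamma(\pi)\otimes_{C^\infty(M)}\Gamma(\eta)\to\Gamma(\pi\otimes\eta)$, $s\otimes t\mapsto[p\mapsto s(p)\otimes t(p)]$, is an isomorphism. I would reduce to the trivial bundle case, using that both sides are additive in each variable and that every bundle is a direct summand of a trivial one.

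First I check that the map is well defined. The assignment $(s,t)\mapsto [p\mapsto s(p)\otimes t(p)]$ yields a smooth section: in local trivialisations $s(p)\otimes t(p)$ has coordinates $s_i(p)t_j(p)$. The assignment is also $C^\infty(M)$-bilinear, since $(fs)(p)\otimes t(p)=s(p)\otimes (ft)(p)=f(p)\,s(p)\otimes t(p)$. So it induces a $C^\infty(M)$-linear map $\Phi_{\pi,\eta}$ on the tensor product over $C^\infty(M)$. This map is natural in $\pi$ and $\eta$ with respect to bundle morphisms.

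Next, the trivial case. If $\pi=M\times\BR^m$ and $\eta=M\times\BR^n$, then $\Gamma(\pi)\cong C^\infty(M)^m$ and $\Gamma(\eta)\cong C^\infty(M)^n$, with bases the constant sections $e_i$ and $f_j$. Hence $\Gamma(\pi)\otimes\Gamma(\eta)\cong C^\infty(M)^{mn}$ with basis $e_i\otimes f_j$. Likewise $\pi\otimes\eta\cong M\times\BR^{mn}$, and $\Gamma(\pi\otimes\eta)$ is free on the constant sections $e_i\otimes f_j$. The map $\Phi$ sends basis to basis, so it is an isomorphism.

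For general bundles, by Theorem \ref{nesth2} the module $\Gamma(\pi)$ is finitely generated projective. So there is a bundle $\pi'$ with $\Gamma(\pi)\oplus\Gamma(\pi')\cong C^\infty(M)^m$, and by Theorem \ref{nesth1} this gives $\pi\oplus\pi'\cong M\times\BR^m$ as bundles. Similarly $\eta\oplus\eta'$ is trivial. Both $\Gamma$ and the fibrewise tensor product commute with finite direct sums: $\Gamma(\alpha\oplus\beta)=\Gamma(\alpha)\oplus\Gamma(\beta)$, and $\otimes$ distributes over $\oplus$ both for modules and for bundles. By naturality, $\Phi_{\pi\oplus\pi',\eta\oplus\eta'}$ therefore decomposes as the direct sum of the four maps $\Phi_{\pi,\eta}$, $\Phi_{\pi,\eta'}$, $\Phi_{\pi',\eta}$ and $\Phi_{\pi',\eta'}$. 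A direct sum of maps is an isomorphism exactly when each summand is, so $\Phi_{\pi,\eta}$ is an isomorphism.

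The main obstacle is the naturality and compatibility bookkeeping in the last step. One must check that the direct sum decompositions on both sides match under $\Phi$. Concretely, the inclusions and projections $\pi\rightleftarrows\pi\oplus\pi'$ induce maps on both sides that commute with $\Phi$. This follows from the pointwise definition of $\Phi$, but it should be verified rather than asserted.
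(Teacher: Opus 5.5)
Your argument is correct. Note that the paper gives no proof of this statement: like Theorems~\ref{nesth1} and~\ref{nesth2}, it is imported from Nestruev, so there is no in-paper route to compare against.

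Your proof runs in four steps:
\begin{enumerate}
\item you check $C^\infty(M)$-bilinearity and naturality of $\Phi$;
\item you verify the free case on the bases of constant sections;
\item you obtain a complement $\pi\oplus\pi'\cong M\times\BR^m$ from Theorem~\ref{nesth2} and the full faithfulness in Theorem~\ref{nesth1};
\item you read off $\Phi_{\pi,\eta}$ as one block of an isomorphism.
\end{enumerate}
This is the standard argument. It uses only results the paper already takes as black boxes, and neither of those involves tensor products, so it fits the paper's logic without circularity.

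The compatibility you flag is routine. Naturality with respect to the inclusions $\iota_a$ and projections $\rho_a$, together with $\sum_a\iota_a\rho_a=\id$, makes $\Phi_{\pi\oplus\pi',\eta\oplus\eta'}$ block diagonal. There is also a second, implicit use of naturality that you should state explicitly. Your free case is proved for the literal product bundles, so you need naturality with respect to the bundle isomorphism $\pi\oplus\pi'\cong M\times\BR^m$ to conclude that $\Phi_{\pi\oplus\pi',\eta\oplus\eta'}$ is an isomorphism.

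You were also right to read the tensor product as being over $C^\infty(M)$. Over $\BR$ the map fails to be injective once $\dim M>0$: for the trivial line bundle, $f\otimes 1-1\otimes f\mapsto 0$ whenever $f$ is nonconstant.

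A fibrewise alternative, closer in spirit to Proposition~\ref{Fibrelocalprop}, would realise $\Phi$ as $\Gamma$ of a bundle map via Theorems~\ref{nesth1} and~\ref{nesth2}. One would then check that on each fibre it is the canonical isomorphism onto $\pi_p\otimes\eta_p$. That route requires identifying the fibres of $\Gamma(\pi)\otimes\Gamma(\eta)$, which your splitting argument avoids.
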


\begin{theorem}\cite[12.39 Theorem.]{Nes}\label{sectionshoms}
    There is an isomorphism
    \begin{align*}
        \Gamma(\Hom(\pi, \eta))&\cong\Hom_{C^\infty(M)}(\Gamma(\pi), \Gamma(\eta)).
    \end{align*}
\end{theorem}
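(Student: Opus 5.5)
The statement to prove is Theorem \ref{sectionshoms}: $\Gamma(\Hom(\pi,\eta))\cong\Hom_{C^\infty(M)}(\Gamma(\pi),\Gamma(\eta))$. I would not argue fibrewise from scratch. Instead I would reduce it to Theorem \ref{nesth1} by viewing global sections of $\Hom(\pi,\eta)$ as bundle morphisms $\pi\to\eta$ over the identity of $M$.

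\textbf{Step 1: sections are bundle maps.} Given a section $\sigma\in\Gamma(\Hom(\pi,\eta))$, define $\Phi_\sigma:\pi\to\eta$ by $v\mapsto \sigma(p)(v)$ for $v$ in the fibre $\pi_p$. In local trivialisations over an open set $U$, $\sigma$ is a smooth matrix-valued function $U\to\mathrm{Mat}_{m\times n}(\BR)$. Hence $\Phi_\sigma$ is smooth and fibrewise linear, i.e. a morphism in $\Vecbun(M)$. Conversely, a morphism $\Phi:\pi\to\eta$ restricts to linear maps $\Phi_p:\pi_p\to\eta_p$. These assemble into a section of $\Hom(\pi,\eta)$, and the same local matrix description shows this section is smooth. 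The two constructions are mutually inverse, so we get a bijection $\Gamma(\Hom(\pi,\eta))\cong\Hom_{\Vecbun(M)}(\pi,\eta)$.

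\textbf{Step 2: linearity over $C^\infty(M)$.} Endow $\Hom_{\Vecbun(M)}(\pi,\eta)$ with the $C^\infty(M)$-module structure $(f\Phi)(v)=f(\pi(v))\Phi(v)$. The bijection of Step 1 is then evidently $C^\infty(M)$-linear.

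\textbf{Step 3: pass to modules.} Compose with the correspondence of Theorem \ref{nesth1}, which sends $\Phi$ to $\Gamma(\Phi):s\mapsto\Phi\circ s$. We have
\[
\Gamma(f\Phi)(s)=f\cdot(\Phi\circ s)=f\,\Gamma(\Phi)(s),
\]
so this bijection respects the module structures and is therefore an isomorphism of $C^\infty(M)$-modules. Explicitly, the composite sends $\sigma$ to the module map $s\mapsto[p\mapsto\sigma(p)(s(p))]$.

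The only genuinely nontrivial input is the surjectivity contained in Theorem \ref{nesth1}: every $C^\infty(M)$-linear map $\Gamma(\pi)\to\Gamma(\eta)$ is induced pointwise, i.e. it is local and $\varphi(s)(p)$ depends only on $s(p)$. That result is taken as given here.

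If one wanted to avoid relying on it, I would instead prove locality directly. If $s$ vanishes at $p$, write $s=\sum f_i s_i$ with $f_i(p)=0$, using a local frame together with a bump function; then $C^\infty(M)$-linearity forces $\varphi(s)(p)=0$. This locality argument is the step I expect to be the main obstacle.
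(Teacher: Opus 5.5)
The paper gives no proof of this statement. Like Theorem \ref{nesth1}, it is quoted from \cite[12.39]{Nes} and used as a black box, so your argument is a genuine derivation where the paper only has a citation.

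Your proof is correct. You identify $\Gamma(\Hom(\pi,\eta))$ with bundle maps $\pi\to\eta$ over $\id_M$ (Step 1). You then check that this identification and $\Phi\mapsto\Gamma(\Phi)$ are both $C^\infty(M)$-linear (Steps 2--3). Together these show that the Hom half of \cite[12.39]{Nes} is a formal corollary of Theorem \ref{nesth1}, so the paper did not need it as a separate input.

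Your route has a concrete side benefit: the explicit formula $\sigma\mapsto\bigl(s\mapsto[p\mapsto\sigma(p)(s(p))]\bigr)$. This is exactly the identification the paper uses without comment in the proof of Lemma \ref{localdegen=sectiondegen}. There it writes $b'(s)(p)=\Gamma(\mathsf{b})(p)\circ s(p)$, and later passes back from module maps to sections of $\Hom(F,F^\vee)$. Your argument therefore also justifies that step.

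The price is that all the analytic content stays inside Theorem \ref{nesth1}. That content is the fact that a $C^\infty(M)$-linear map of section modules is induced pointwise. Your optional locality sketch is the standard way to supply it: writing $s=\chi^2 s+(1-\chi^2)s$ with $\chi$ a bump function does give $s=\sum f_i s_i$ globally with $f_i(p)=0$. If you went that way, you would also need one more check: the fibrewise maps you obtain assemble into a smooth bundle map. This follows by expressing the images of the globalised frame sections $\chi e_j$ in a local frame of $\eta$.

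For comparison, another standard route avoids locality altogether. The natural map is an isomorphism for trivial bundles, since both sides are matrices over $C^\infty(M)$. It is additive in both variables, and every bundle is a direct summand of a trivial one, which is the content behind Theorem \ref{nesth2}. That approach trades the bump-function argument for Swan-type input, whereas yours uses only full faithfulness of $\Gamma$.
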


\begin{remark}
    In fact, the above isomorphisms are natural, which makes the global sections functor $\Gamma$ a monoidal equivalence.
\end{remark}

\subsection{Localisation and Fibres}

Further investigating this equivalence we get that basic algebraic tools, such as localisation, can be realised geometrically:

\begin{proposition}\label{evalCprop}
    For every point $p\in M$, the map
    \begin{align*}
        \phi:C^\infty(M)_{\mathfrak{m}_p}/\mathfrak{m}_pC^\infty(M)_{\mathfrak{m}_p}&\longrightarrow \Lambda^0_p\\
        \left[\frac{f}{g}\right]&\longmapsto \frac{f(p)}{g(p)}
    \end{align*}
    is an isomorphism.
\end{proposition}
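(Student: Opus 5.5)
The plan is to realise $\phi$ as the map induced by the evaluation homomorphism $\mathrm{ev}_p: C^\infty(M)\to\BR\cong\Lambda^0_p$, $f\mapsto f(p)$. Throughout, $\mathfrak{m}_p=\ker(\mathrm{ev}_p)=\{f\in C^\infty(M): f(p)=0\}$, and we identify $\Lambda^0_p$ with $\BR$. The argument rests on one standard fact: the residue field of the localisation of a ring at a maximal ideal is the quotient of the ring by that ideal, $R_{\mathfrak m}/\mathfrak m R_{\mathfrak m}\cong R/\mathfrak m$. So the proof reduces to three checks: $\mathfrak{m}_p$ is a maximal ideal with $C^\infty(M)/\mathfrak{m}_p\cong\BR$; $\phi$ is well defined; and the composite identification agrees with the formula in the statement.

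First, $\mathrm{ev}_p$ is a ring homomorphism. It is surjective because constant functions are smooth. Hence $C^\infty(M)/\mathfrak{m}_p\cong\BR$, which is a field, so $\mathfrak{m}_p$ is maximal.

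Next, well-definedness of $\phi$. A fraction $f/g$ in $C^\infty(M)_{\mathfrak{m}_p}$ has $g\notin\mathfrak{m}_p$, so $g(p)\neq0$ and $f(p)/g(p)$ makes sense. Suppose $f/g=f'/g'$. Then there is some $h\notin\mathfrak{m}_p$ with $h(fg'-f'g)=0$. Evaluating at $p$ and dividing by $h(p)\neq0$ gives $f(p)g'(p)=f'(p)g(p)$, so the two values agree. Thus $\mathrm{ev}_p$ extends to a ring homomorphism $C^\infty(M)_{\mathfrak{m}_p}\to\BR$. This extension kills $\mathfrak{m}_pC^\infty(M)_{\mathfrak{m}_p}$, since its elements are fractions $f/g$ with $f(p)=0$. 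Therefore $\phi$ descends to the quotient.

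Finally, bijectivity. Surjectivity is immediate from the constants: $\phi([c/1])=c$. For injectivity, suppose $f(p)/g(p)=0$. Then $f(p)=0$, so $f\in\mathfrak{m}_p$, and hence $f/g=f\cdot(1/g)\in\mathfrak{m}_pC^\infty(M)_{\mathfrak{m}_p}$. Alternatively, the target is a field and the source is a field, since $C^\infty(M)_{\mathfrak{m}_p}$ is local with maximal ideal $\mathfrak{m}_pC^\infty(M)_{\mathfrak{m}_p}$. A unital ring homomorphism between fields is then automatically injective.

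There is no real obstacle. The only point requiring care is the localisation equivalence relation, which involves the auxiliary factor $h$ because $C^\infty(M)$ has zero divisors (bump functions). That is exactly why $h(p)\neq0$ is needed in the well-definedness check.
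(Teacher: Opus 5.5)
Your proof is correct and follows essentially the same route as the paper: evaluation at $p$ is well defined on fractions, surjectivity comes from choosing a numerator with prescribed value at $p$, and injectivity comes from $f(p)=0\Rightarrow f\in\mathfrak{m}_p$. Your version is slightly more careful than the paper's in two places: you handle the auxiliary factor $h$ in the localisation relation explicitly, whereas the paper tacitly assumes equal fractions have equal values at $p$, and you use constants instead of bump functions for surjectivity.
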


\begin{proof}
    The map $\phi$ is well-defined, since if $\frac{f}{g}, \frac{f'}{g'}\in C^\infty(M)_{\mathfrak{m}_p}$ are in the same class
    \begin{align*}
        \left[\frac{f}{g}\right]=\left[\frac{f'}{g'}\right]
    \end{align*}
    then, by definition, there exists an element $\frac{d}{h}\in \mathfrak{m}_pC^\infty(M)_{\mathfrak{m}_p}$ such that
    \begin{align*}
        \frac{f}{g}-\frac{f'}{g'}=\frac{d}{h}.
    \end{align*}
    Since  $\frac{d}{h}\in \mathfrak{m}_pC^\infty(M)_{\mathfrak{m}_p}$ we get that $d(p)=0$ and so
    \begin{align*}
        \phi\left[\frac{f}{g}\right]=\frac{f(p)}{g(p)}=\frac{f(p)}{g(p)}-\frac{d(p)}{h(p)}=\frac{f'(p)}{g'(p)}=\phi\left[\frac{f'}{g'}\right].
    \end{align*}
    The linearity of the map is due to the fact that evaluation is linear, i.e., $(rf+r'f')(p)=rf(p)+r'f'(p)$ for every $r, r'\in\BR$ and $f, f'\in C^\infty(M)$. Surjectivity is given by picking $f, g\in C^\infty(M)$ bump-functions about $p$, with $g(p)=1$ and $f(p)$ any value in $\BR$. Injectivity follows from
    \begin{align*}
        \phi\left[\frac{f}{g}\right]=0
    \end{align*}
    if and only if $f(p)=0$ if and only if $f\in\mathfrak{m}_p$. This concludes the proof.
\end{proof}

\begin{proposition}\label{Fibrelocalprop}
    Let $F$ be a smooth vector bundle over $M$. Then, for every point $p\in M$, the map
    \begin{align*}
        \varphi:\Gamma(F)\otimes_{C^\infty(M)}C^\infty(M)_{\mathfrak{m}_p}/\mathfrak{m}_pC^\infty(M)_{\mathfrak{m}_p}&\longrightarrow F_p\\
        [s:M\to F]\otimes\left[\frac{f}{g}\mid f, g:M\to\Lambda^0, g(p)\neq 0\right]&\longmapsto s(p)\frac{f(p)}{g(p)}
    \end{align*}
    where
    \begin{align*}
        \mathfrak{m}_p:=\{r:M\to \Lambda^0\mid\; r(p)=0\}
    \end{align*}
    is an isomorphism of vector spaces over $\BR$.
\end{proposition}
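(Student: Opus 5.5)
We reduce to Proposition \ref{evalCprop} by writing $\kappa_p:=C^\infty(M)_{\mathfrak{m}_p}/\mathfrak{m}_pC^\infty(M)_{\mathfrak{m}_p}$, so that $\kappa_p\cong\BR\cong\Lambda^0_p$ via $\phi$. The plan has three steps: show $\varphi$ is well defined and $\BR$-linear, show it is surjective, and deduce injectivity from a dimension count. The dimension count is where projectivity of $\Gamma(F)$ (Theorem \ref{nesth2}) is used.

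\textbf{Well-definedness.} We check that the assignment $(s,[f/g])\mapsto s(p)\,\phi[f/g]$ is $C^\infty(M)$-balanced. For $h\in C^\infty(M)$,
\begin{align*}
(hs)(p)\,\phi[f/g]=h(p)s(p)\frac{f(p)}{g(p)}=s(p)\,\phi[hf/g].
\end{align*}
It is also additive in each variable. Independence of the representative of $[f/g]$ was already shown in the proof of Proposition \ref{evalCprop}. The universal property of the tensor product then gives a well-defined homomorphism $\varphi$, which is $\BR$-linear.

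\textbf{Surjectivity.} Take a local frame $e_1,\dots,e_n$ of $F$ on a neighbourhood $U$ of $p$, and a bump function $\beta$ supported in $U$ with $\beta(p)=1$. Then $\beta e_i$ extends by zero to a global section, and $\varphi\big((\beta e_i)\otimes[1]\big)=e_i(p)$. Hence the image contains a basis of $F_p$, so $\varphi$ is surjective.

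\textbf{Injectivity by dimension count.} We show that $\Gamma(F)\otimes\kappa_p$ is spanned over $\kappa_p\cong\BR$ by the $n$ elements $(\beta e_i)\otimes[1]$. Since $\varphi$ is surjective onto the $n$-dimensional space $F_p$, it is then an isomorphism. For any global section $s$, on $U$ we write $s=\sum_i a_i e_i$ with smooth $a_i$. Then
\begin{align*}
s=\beta^2 s+(1-\beta^2)s=\sum_i (\beta a_i)(\beta e_i)+(1-\beta^2)s,
\end{align*}
where $\beta a_i$ extends by zero to a smooth global function. Since $1-\beta^2\in\mathfrak{m}_p$, we get $(1-\beta^2)s\otimes[1]=s\otimes[1-\beta^2]=0$ in $\Gamma(F)\otimes\kappa_p$. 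Consequently
\begin{align*}
s\otimes[1]=\sum_i (\beta e_i)\otimes[\beta a_i]=\sum_i (\beta a_i)(p)\,(\beta e_i)\otimes[1].
\end{align*}
Since every element of $\kappa_p$ is a real scalar by Proposition \ref{evalCprop}, the claimed spanning follows.

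\textbf{Alternative route.} One could instead use Theorem \ref{nesth2}: write $\Gamma(F)\oplus Q\cong C^\infty(M)^N$, and use that $\otimes\kappa_p$ and evaluation at $p$ are both additive and agree on free modules by Proposition \ref{evalCprop}. This works if one knows $Q=\Gamma(F')$ with $F\oplus F'$ trivial, which is the standard proof of Theorem \ref{nesth2}.

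The main obstacle is injectivity. The partition-of-unity argument above is the step to get right: it crucially uses that $1-\beta^2$ lies in $\mathfrak{m}_p$ and can therefore be absorbed into the residue field.
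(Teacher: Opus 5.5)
Your proof is correct, and on the decisive step it is more complete than the paper's. For well-definedness, the paper only asserts it from the well-definedness of $\phi$ and of evaluation; you also check $C^\infty(M)$-balancedness. For surjectivity, the paper uses a bump-function section with prescribed value at $p$; you make this precise with a local frame. The genuine difference is injectivity. The paper observes that $s(p)\frac{f(p)}{g(p)}$ vanishes exactly when $s(p)=0$ or $f(p)=0$, and concludes injectivity directly. This only identifies which pure tensors map to zero. It does not show that $s\otimes[f/g]$ is actually zero in $\Gamma(F)\otimes_{C^\infty(M)}\kappa_p$ when $s(p)=0$. That requires the local fact that a section vanishing at $p$ lies in $\mathfrak{m}_p\Gamma(F)$. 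Passing from sums to pure tensors is harmless, since every class in $\kappa_p$ is a real multiple of $[1]$. Your decomposition $s=\sum_i(\beta a_i)(\beta e_i)+(1-\beta^2)s$ proves precisely this local fact, since it gives $s-\sum_i a_i(p)\,\beta e_i\in\mathfrak{m}_p\Gamma(F)$. Recasting it as a spanning statement plus a dimension count also removes any element-chasing in the tensor product. So the paper's route is shorter but leaves the key local lemma implicit, whereas yours makes it explicit using only local frames and bump functions. One small correction to your framing: the main argument never uses projectivity of $\Gamma(F)$ or Theorem \ref{nesth2}; only your alternative route does.
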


\begin{proof}
    The map is well-defined due to the map $\phi$ of proposition \ref{evalCprop} being well-defined and the fact that evaluation $s\mapsto s(p)$ is well-defined.

    The residue field $\kappa(\mathfrak{m}_p)=C^\infty(M)_{\mathfrak{m}_p}/\mathfrak{m}_pC^\infty(M)_{\mathfrak{m}_p}$ is a field isomorphic to $\BR$, by proposition \ref{evalCprop} and the fact that $\Lambda^0_p=\BR$. This means that $\Gamma(F)\otimes_{C^\infty(M)}C^\infty(M)_{\mathfrak{m}_p}/\mathfrak{m}_pC^\infty(M)_{\mathfrak{m}_p}$ is a real vector space. Since $F_p$ too is a vector space over $\BR$ we want to show that $\varphi$ is $\BR$-linear, and this is almost immediate:

    Let $r, r'\in\BR$ and $s, s'\in\Gamma(F)$ and $\left[\frac{f}{g}\right], \left[\frac{f'}{g'}\right]\in C^\infty(M)_{\mathfrak{m}_p}/\mathfrak{m}_pC^\infty(M)_{\mathfrak{m}_p}$. Then
    \begin{align*}
        \varphi\left(rs\otimes\left[\frac{f}{g}\right]+r's'\otimes\left[\frac{f'}{g'}\right]\right)&=rs(p)\frac{f(p)}{g(p)}+r's'(p)\frac{f'(p)}{g'(p)}\\
        &=r\varphi\left(s\otimes\left[\frac{f}{g}\right]\right)+r'\varphi\left(s'\otimes\left[\frac{f'}{g'}\right]\right).
    \end{align*}
    Surjectivity is, similar to before, due to the fact that we can pick $s\in\Gamma(F)$ a bump-function on a sufficiently small neighbourhood of $p$, with value $s(p)=1$. Since $s(p)\frac{f(p)}{g(p)}$ is zero whenever $s(p)=0$ or $f(p)=0$ the map is injective so we are done.
\end{proof}

\section{$G_2$-structures}\label{G2prelim}

\subsection{Short introduction to $G_2$-structures}

We again let $M$ be a connected smooth manifold and denote its dimension by $n$. Let $F(TM)$ denote the \emph{frame bundle} of the tangent bundle $TM$ of $M$. We call the elements of the fibres $F(TM)_p=\{f:\BR^n\to T_pM \text{ linear}\}$ of the frame bundle \emph{frames} and elements of its dual \emph{coframes}. It is a $\GL_n(\BR)$-principal bundle, so its rank is $n^2=\dim_\BR(\GL_n(\BR))$.

\begin{definition}[$G$-structure]\label{gstruct}
    Let $G$ be a Lie subgroup of $\GL_n(\BR)$. A \emph{$G$-structure} on $M$ is a principal $G$-bundle $F_G(TM)$ that is also a fibre subbundle of $F(TM)$, with fibrewise inclusion $G\hookrightarrow\GL_n(\BR)$.
    
    We call $F_G(TM)$ a \emph{$G$-reduction} of the frame bundle.
\end{definition}

A smooth manifold $M$ equipped with a bilinear form $g:\Gamma(TM)\otimes\Gamma(TM)\to C^\infty(M)$ that is positive-definite over every point $p\in M$ is called \emph{Riemannian}, in which case the bilinear form $g$ is called a metric. A smooth manifold $M$ admits a Riemannian structure, i.e., a metric $g$ as above, if and only if it admits an $\Orth(n)$-structure. Furthermore, a manifold $M$ is orientable if and only if it admits a $\GL^+_n(\BR)$-structure, where $\GL^+_n(\BR)\subseteq\GL_n(\BR)$ denotes the subgroup of matrices with positive determinant.

We introduce one of many definitions of the group $G_2$ as a subgroup of $\GL_7(\BR)$. The main source for this section is \cite{Bry}.

We begin by defining a 3-form $\varphi_0$ on the vector space dual of $\BR^7$ by picking a basis $e_1, \dots, e_7$ of $\BR^7$, and letting $e^1, \dots, e^7$ be the corresponding dual basis. By writing $e^{ijk}:=e^i\wedge e^j\wedge e^k$ we then define the 3-form $\varphi_0$ on $\BR^7$ as
\begin{align*}
    \varphi_0=e^{123}+e^{145}+e^{167}+e^{246}-e^{257}-e^{347}-e^{356}\in\bigwedge^3((\BR^7)^\vee).
\end{align*}
This the allows us to define the compact Lie group $G_2$ over $\BR$:

\begin{definition}[Compact group $G_2$]\cite[Definition 1]{Bry}
    We define the compact group $G_2$ as the stabilizer of $\varphi_0$, i.e.,
    \begin{align*}
        G_2:=\{\; \phi\in\GL_7(\BR): \; \phi^*(\varphi_0)=\varphi_0 \; \},
    \end{align*}
    where the pullback $\phi^*$ denotes precomposition by $\phi^{\otimes 3}$.
\end{definition}

\begin{remark}
    With $G_2$ defined, as a subgroup of $\GL_7(\BR)$, we have that a $G_2$-structure on a Riemannian manifold $M$ is a $G_2$-reduction of the frame bundle of $M$ as in definition \ref{gstruct}. Note that there is a chain of inclusions of subgroups
    \begin{align*}
        G_2\subset \SO_7(\BR)\subset \Orth_7(\BR)\subset\GL_7(\BR).
    \end{align*}
    This yields an analogous chain of $G$-reductions of the frame bundle $F(TM)$.
    
    There is an alternative characterisation of $G_2$-structure, conducted in \cite{Bry}. To make this, we first present a remark connecting to the above definition of the 3-form $\varphi_0$.
\end{remark}

\begin{remark}\cite[2.2. Associated structures]{Bry}
    The group $G_2$ preserves the metric and orientation for which $e_1, \dots, e_7$ is an oriented orthonormal basis. In this setting, a metric on a vector space is a positive-definite symmetric bilinear form. For the time being we denote the associated metric by $\langle, \rangle$. Recall that an orientation of a vector space is a choice of ordered basis together with a choice of positive sign for it. The sign of any other ordered basis is given by the sign of the determinant of the base change matrix from the chosen positive basis.

    Below we recall the definition of the \emph{Hodge star operator} associated to a metric and an orientation. The source for the following description is \cite{Jost}:

    Let $V$ be a real vector space of dimension $n$ with a positive-definite symmetric bilinear form $\langle, \rangle$ and $\bigwedge^p V$ the $p$-fold exterior product of $V$. One then obtains a scalar product on $\bigwedge^p V$ by
    \begin{align*}
        \langle v_1\wedge\dots\wedge v_p,  w_1\wedge\dots\wedge w_p\rangle=\det([\langle v_i, w_j\rangle]_{1\leq i, j\leq p})
    \end{align*}
    Letting $V$ carry an orientation, there is a linear operator
    \begin{align*}
        *:\bigwedge^pV\to \bigwedge^{n-p}V
    \end{align*}
    determined by
    \begin{align*}
        *(e_{i_1}\wedge\dots\wedge e_{i_p})=e_{j_1}\wedge\dots\wedge e_{j_{n-p}}
    \end{align*}
    where the indices $j_1, \dots, j_{n-p}$ are chosen such that $e_{i_1},\dots e_{i_p},e_{j_1},\dots, e_{j_{n-p}} $ is a positive basis of $V$.

    Returning to the setting of $\BR^7$ with the positively oriented basis $e_1, \dots, e_7$ and metric $\langle, \rangle$ we get that the Hodge star operator $*$ on $\bigwedge^7(\BR^7)^\vee$ satisfies
    \begin{align*}
        *(e^{123})=e^{4567}
    \end{align*}
    by looking at the signs of the permutations of $e_1, \dots, e_7$ (and in effect the determinants of base-change matrices). Thus, the Hodge dual of $\varphi_0$ is given by the $4$-form
    \begin{align*}
        *(\varphi_0)=e^{4567}+e^{2367}+e^{2345}+e^{1357}-e^{1346}-e^{1256}-e^{1247}.
    \end{align*}
    This means that $G_2$ also fixes this form.

    Let $\bigwedge_+^3((\BR^7)^\vee)$ be defined as the $\GL_7(\BR)$-orbit of $\varphi_0$ in $\bigwedge^3((\BR^7)^\vee)$, with the action induced by matrix multiplication. The elements of this set will all have the same associated bilinear form.
    
    With this it is possible to enter the context of a connected 7-dimensional Riemannian manifold $M$ with metric $g$, in order to give an equivalent definition of $G_2$-structures. Consider the union of the subspaces $\bigwedge_+^3(T_p^\vee M)$, with $p\in M$, as a subbundle of $\bigwedge^3(T^\vee M)$, which is denoted by $\bigwedge_+^3(T^\vee M)$.
\end{remark}

\begin{definition}[Definite 3-forms on manifolds]\cite[Definition 2]{Bry}
    A 3-form $\sigma$ on $M$ which takes values in $\bigwedge_+^3(T^\vee M)$ will be said to be a \emph{definite} 3-form on $M$.
\end{definition}

\begin{remark}
    Whenever $M$ is Riemannian, there is a canonical isomorphism\footnote{This is given by tensor-hom adjunction}
    \begin{align*}
        \Gamma\left(\bigwedge^k T^\vee M\right)\cong\bigwedge^k\Gamma(T^\vee M).
    \end{align*}
    This restricts to the positive forms, and so we will take a definite 3-form to mean an element of
    \begin{align*}
        \bigwedge^3_+\Gamma(T^\vee M).
    \end{align*}
    For the same reason we also have canonical isomorphisms
    \begin{align*}
        \bigwedge^3\Gamma(T^\vee M)\cong\bigwedge^3\Gamma(TM)^\vee\cong\left(\bigwedge^3\Gamma(T M)\right)^\vee,
    \end{align*}
    with which we may uniquely define a definite 3-form as an element of
    \begin{align*}
        \left(\bigwedge^3_+\Gamma(T M)\right)^\vee.
    \end{align*}

    Bryant explains in \cite[3.1.1. $G_2$\textit{-structures and definite 3-forms.}]{Bry} that there is a 1:1-correspondence between $G_2$-structures and definite 3-forms over a given smooth 7-dimensional manifold $M$. The explanation is given in terms of coframes, and is, as will be explained, equivalent to the following presentation of the result.
\end{remark}

\begin{proposition}\label{G2definite3form}
    Let $M$ be a smooth 7-dimensional manifold. Let $\varphi\in\bigwedge_+^3\Gamma(T^\vee M)$ be a definite 3-form. Then the fibre bundle $F_\varphi$, whose fibre over any $p\in M$ is defined by
    \begin{align*}
        (F_\varphi)_p:=\{u\in \mathrm{Isom}(\BR^7, T_pM) \; : \; u^*(\varphi_p)=\varphi_0\},
    \end{align*}
    is a $G_2$-structure over $M$.
\end{proposition}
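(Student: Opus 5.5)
The plan is to show three things in turn: each fibre $(F_\varphi)_p$ is a right $G_2$-torsor; $F_\varphi$ is a smooth subbundle of $F(TM)$ with smooth local sections; and the $G_2$-action is compatible with the inclusion $G_2\hookrightarrow\GL_7(\BR)$. Together these verify Definition \ref{gstruct}. Throughout I read $\mathrm{Isom}(\BR^7,T_pM)$ as the linear isomorphisms, i.e.\ the fibre $F(TM)_p$ of the frame bundle.

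\emph{Step 1: fibres.} Fix $p$. Since $\varphi$ is definite, $\varphi_p\in\bigwedge^3_+(T_p^\vee M)$. Choosing any linear isomorphism $v:\BR^7\to T_pM$ identifies $\bigwedge^3_+(T^\vee_pM)$ with the $\GL_7(\BR)$-orbit of $\varphi_0$. Hence there is $u\in\GL(\BR^7,T_pM)$ with $u^*\varphi_p=\varphi_0$, so $(F_\varphi)_p\neq\emptyset$.

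The group $G_2$ acts on $(F_\varphi)_p$ on the right by $u\mapsto u\circ g$, because $(u\circ g)^*\varphi_p=g^*u^*\varphi_p=g^*\varphi_0=\varphi_0$. The action is free, being a restriction of the free action of $\GL_7(\BR)$ on frames. It is transitive: if $u,u'\in(F_\varphi)_p$, then $g:=u^{-1}u'$ satisfies $g^*\varphi_0=u'^*(u^{-1})^*\varphi_0=u'^*\varphi_p=\varphi_0$, so $g\in G_2$. Thus $(F_\varphi)_p$ is a $G_2$-torsor inside $F(TM)_p$, and the action is the restriction of the $\GL_7(\BR)$-action along the inclusion.

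\emph{Step 2: local smooth sections.} This is the main obstacle, and I would attack it via the orbit map. Consider
\[
\pi:\GL_7(\BR)\to\bigwedge^3_+((\BR^7)^\vee),\qquad A\mapsto (A^{-1})^*\varphi_0 .
\]
It is a surjective equivariant map onto an orbit, so it identifies $\bigwedge^3_+$ with the homogeneous space $\GL_7(\BR)/G_2$. The quotient map of a Lie group by a closed subgroup is a smooth principal bundle and admits smooth local sections. Moreover, $\bigwedge^3_+$ is open in $\bigwedge^3$, because $\dim\GL_7-\dim G_2=49-14=35=\dim\bigwedge^3(\BR^7)^\vee$; I would quote this from \cite{Bry} rather than reprove it.

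Now take a smooth local frame $v:U\times\BR^7\to TM|_U$ near $p$. Then $x\mapsto v_x^*\varphi_x$ is a smooth map $U\to\bigwedge^3_+((\BR^7)^\vee)$. Composing it with a smooth local section $\sigma$ of $\pi$, defined after shrinking $U$, gives a smooth $A:U\to\GL_7(\BR)$ with $(A_x^{-1})^*\varphi_0=v_x^*\varphi_x$. Setting $u_x:=v_x\circ A_x^{-1}$, we get $u_x^*\varphi_x=(A_x^{-1})^*v_x^*\varphi_x$, and one checks with the convention that this equals $\varphi_0$ (replace $A$ by $A^{-1}$ if the convention is reversed). So $u$ is a smooth local section of $F_\varphi$.

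\emph{Step 3: trivialisations.} The local section $u$ yields the map $U\times G_2\to F_\varphi|_U$, $(x,g)\mapsto u_x\circ g$. This is a bijection by Step 1, and it is the restriction of the smooth trivialisation $U\times\GL_7(\BR)\to F(TM)|_U$ to the embedded submanifold $U\times G_2$. Therefore $F_\varphi$ is an embedded fibre subbundle of $F(TM)$ and a principal $G_2$-bundle with fibrewise inclusion $G_2\hookrightarrow\GL_7(\BR)$. This is exactly a $G_2$-structure in the sense of Definition \ref{gstruct}.

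Finally, since $G_2\subset\SO_7$ preserves the metric and orientation attached to $\varphi_0$, these frames are automatically isometries for the metric induced by $\varphi$. This justifies the notation $\mathrm{Isom}$ in the statement when $g$ is taken to be the metric determined by $\varphi$.
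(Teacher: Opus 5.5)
Your proposal is correct. Its core is the paper's argument, and it adds a step the paper leaves out.

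Your Step 1 is exactly the paper's proof. The paper fixes one $u$ with $u^*(\varphi_p)=\varphi_0$, notes that every other such frame is $u\circ g$ for a unique $g\in G_2$, and concludes that each fibre carries a free and transitive $G_2$-action. It then simply asserts that $F_\varphi$ is a subbundle of $F(TM)$ and stops. It also leaves non-emptiness of the fibres implicit, which you check.

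Your Steps 2 and 3 supply what that assertion needs: smooth local sections of $F_\varphi$, and from them local trivialisations obtained by restricting those of $F(TM)$ to $U\times G_2$. You get the sections from local sections of $\GL_7(\BR)\to\GL_7(\BR)/G_2$, transported through the identification $\GL_7(\BR)/G_2\cong\bigwedge^3_+((\BR^7)^\vee)$. So the paper's version establishes only the fibrewise torsor structure. Yours also proves the smooth principal-bundle and fibre-subbundle conditions of Definition \ref{gstruct}.

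For that identification as smooth manifolds you use more than openness of the orbit. You also need the orbit map to be an immersion: the kernel of its differential at the identity is the Lie algebra of the stabiliser $G_2$. With $49-14=35$, this makes $\GL_7(\BR)/G_2\to\bigwedge^3((\BR^7)^\vee)$ an open embedding. In particular, this gives the openness you cite from Bryant.

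One concrete fix in Step 2. With your convention $\pi(A)=(A^{-1})^*\varphi_0$ and $A_x=\sigma(v_x^*\varphi_x)$, the correct frame is $u_x:=v_x\circ A_x$. Then $u_x^*\varphi_x=A_x^*(A_x^{-1})^*\varphi_0=(A_x^{-1}A_x)^*\varphi_0=\varphi_0$. The choice $u_x=v_x\circ A_x^{-1}$ that you wrote gives $(A_x^{-2})^*\varphi_0$ instead. Your hedge covers this, but you should commit to the correct formula.
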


\begin{proof}
    The bundle defined is a subbundle of the frame bundle $F(TM)$. Over any given point $p\in M$, the pullback $u^*(\varphi_p)$ is given by $\varphi_p\circ(u\otimes u\otimes u)$. We see that picking any $u\in \Hom(\BR^7, T_pM)$ satisfying $u^*(\varphi_p)=\varphi_0$, the remaining are given by pre-composition by some unique element of $G_2\subset\GL_7(\BR)$. This shows that every fibre has an action of $G_2$ that is free and transitive, which indeed gives us that $F_\varphi$ is a $G_2$-structure.
\end{proof}

\begin{remark}
    In \cite[3.1.1. $G_2$\textit{-structures and definite 3-forms.}]{Bry}, Bryant gives the analogous construction but in terms of co-frames, i.e., the definition of the bundle $F_\varphi$ is instead taken to be
    \begin{align*}
        (F_\varphi)_p:=\{u\in\mathrm{Isom}(T_pM, V) \; : \; u^*(\varphi_0)=\varphi_p\},
    \end{align*}
    where $V$ is a 7-dimensional real vector space. Note the difference in which form is being pulled back.
    
    Since there is a canonical isomorphism
    \begin{align*}
        \{u\in \mathrm{Isom}(V, T_pM) \; : \; u^*(\varphi_p)=\varphi_0\}\cong\{u\in\mathrm{Isom}(T_pM, V) \; : \; u^*(\varphi_0)=\varphi_p\}
    \end{align*}
    for any real 7-dimensional vector space $V$, the frame bundle is canonically isomorphic to the co-frame bundle, whence the construction in \cite{Bry} is dual to proposition \ref{G2definite3form}. Furthermore, Bryant states that all $G_2$-structures are given by the construction, a fact which still holds in the case of subbundles of the frame bundle and $F_\varphi$ in the notation above. This means that there is a 1:1-correspondence between $G_2$-structures on $M$ and the set $\bigwedge^3_+\Gamma(T^\vee M)$ of definite 3-forms on $M$. This can be seen by the fact that
    \begin{align*}
        (F_\varphi)_p\cong\GL(\BR)/G_2 \hspace{4mm} \text{ for every } p\in M.
    \end{align*}

    In the sequel, $G_2$-structures will be used as, or at least in reference to, their associated definite 3-forms. We continue by presenting the obstructions for a Riemannian manifold $M$ to admitting $G_2$-structure.
\end{remark}

\begin{theorem}\cite{gray_vector_1969, fernandez_riemannian_1982}\label{g2obstr}
    Let $M$ be a 7-dimensional smooth manifold. Then, $M$ admits a $G_2$-structure if and only if the Stiefel--Whitney classes $w_1$ and $w_2$ both vanish.
\end{theorem}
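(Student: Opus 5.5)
The plan is to reduce existence of a $G_2$-structure to a reduction of structure group along the chain $G_2\subset\SO_7(\BR)\subset\Orth_7(\BR)\subset\GL_7(\BR)$, and handle each step by obstruction theory. First, every smooth manifold admits a Riemannian metric (partition of unity), so $F(TM)$ always reduces to $\Orth_7$; since $\GL_7(\BR)/\Orth_7$ is contractible, nothing is lost here. The reduction from $\Orth_7$ to $\SO_7$ exists if and only if $M$ is orientable, i.e., if and only if $w_1(M)=0$. Thus the content of the statement is: an oriented Riemannian $7$-manifold admits a $G_2$-reduction if and only if $w_2=0$. In terms of Proposition \ref{G2definite3form}, this is the existence of a definite $3$-form.

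For necessity, I would use that $G_2$ is simply connected. Then the inclusion $G_2\hookrightarrow\SO_7$ lifts through the double cover $\mathrm{Spin}(7)\to\SO_7$. A $G_2$-structure therefore induces a $\mathrm{Spin}(7)$-structure, that is, a spin structure, which forces $w_2=0$. Concretely, $w_2$ is the obstruction to lifting the classifying map $M\to B\SO_7$ to $B\mathrm{Spin}(7)$, and $BG_2\to B\SO_7$ factors through $B\mathrm{Spin}(7)$.

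For sufficiency, assume $w_1=w_2=0$ and choose a spin structure, so that $TM$ has structure group $\mathrm{Spin}(7)$. The key fact I would invoke is that $\mathrm{Spin}(7)$ acts transitively on the unit sphere $S^7$ of the real $8$-dimensional spin representation, with stabiliser $G_2$. Hence $\mathrm{Spin}(7)/G_2\cong S^7$. A reduction of the $\mathrm{Spin}(7)$-bundle $P$ to $G_2$ is then equivalent to a section of the associated bundle $P\times_{\mathrm{Spin}(7)}S^7$, which is the unit sphere bundle of the real spinor bundle $\BS$ of rank $8$. So it suffices to produce a nowhere-vanishing spinor. Since the fibre $S^7$ is $6$-connected and $\dim M=7$, the only obstruction is the Euler class $e(\BS)\in H^8(M;\BZ)=0$. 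Explicitly, a generic section of the rank-$8$ bundle $\BS$ over the $7$-manifold $M$ misses the zero section by transversality. Given a unit spinor $\psi$, the $3$-form $\varphi(X,Y,Z)=\langle\psi, X\cdot Y\cdot Z\cdot\psi\rangle$ (up to sign) is pointwise in the $\GL_7$-orbit of $\varphi_0$, so it is a definite $3$-form. By Proposition \ref{G2definite3form}, it yields the desired $G_2$-structure.

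The main obstacle is the representation-theoretic input: the transitivity of $\mathrm{Spin}(7)$ on $S^7$ with stabiliser exactly $G_2$ (identified with the stabiliser of $\varphi_0$ as defined here), together with the simple connectivity of $G_2$ used for necessity. I would cite these as classical facts (e.g.\ from \cite{Bry}) rather than reprove them, perhaps sketching them via the octonions: $\mathrm{Spin}(7)$ acts on $\BO\cong\BR^8$, and the stabiliser of $1$ is the automorphism group of $\BO$. The remaining steps are standard obstruction theory and transversality.
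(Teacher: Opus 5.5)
Your argument is correct. It also supplies a proof where the paper only gives a citation. The theorem is quoted from Gray and Fern\'andez--Gray, and the paper's only hint at the underlying argument (in the proof of the rank-$7$-bundle proposition in Section~\ref{generalisationsection}) is Gray's obstruction theory for $2$-fold vector cross products, i.e.\ octonion multiplications on $\Lambda^0\oplus TM$ with the real part omitted.

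You go through $\mathrm{Spin}(7)$ instead. Necessity comes from $\pi_1(G_2)=0$. Sufficiency comes from $\mathrm{Spin}(7)/G_2\cong S^7$ together with a nowhere-vanishing section of the rank-$8$ real spinor bundle, which you convert into a definite $3$-form so that Proposition~\ref{G2definite3form} applies. That last step correctly handles the fact that the stabiliser of a spinor is only conjugate to the paper's $G_2=\mathrm{Stab}(\varphi_0)$.

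What your route buys is a self-contained argument resting on two classical Lie-theoretic facts. It also isolates exactly where $\dim M=7$ is used: only to kill the obstructions to a unit spinor, the first of which is the Euler class of the spinor bundle in $H^8(M;\BZ)$. This hypothesis silently disappears in the Section~\ref{generalisationsection} generalisation to rank-$7$ bundles $E$ over an arbitrary $M$, and that generalisation does fail over higher-dimensional bases. Every rank-$7$ bundle over $S^8$ has $w_1=w_2=0$, but since $\pi_7(G_2)=0$ while $\pi_7(\SO_7(\BR))\cong\BZ$, only the trivial one admits a $G_2$-structure.

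The vector-cross-product route fits the paper's octonionic viewpoint more directly, but the two pictures are close. A unit spinor $\psi$ identifies the spinor bundle with $\Lambda^0\oplus TM$ via $a+\alpha\mapsto a\psi+\alpha\cdot\psi$, which is essentially Grigorian's octonion bundle. The fact that $\pm\psi$ give the same $3$-form is the $S^7$ versus $\BR\BP^7=\SO_7(\BR)/G_2$ discrepancy, reflected in the sign symmetry of Theorem~\ref{Bryanttwist}.
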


\begin{remark}
    The Stiefel--Whitney classes $w_i=w_i(TM)$, $i\in\BZ_{\geq 0}$, are cohomology classes of the tangent bundle $TM$ of $M$. These are classes of the cohomology ring $H^*(M;\BZ/2\BZ)$ with the cup product. The vanishing of the Stiefel--Whitney classes $w_1$ and $w_2$ correspond to $M$ being orientable and that $M$ admits a spin structure, respectively.
\end{remark}

\begin{remark}
    We now focus on a classification of $G_2$-structures that has been made. We say that $G_2$-structures with the same associated metric and orientation are \emph{isometric}. This means that two $G_2$-structures are isometric if and only if, at every point, one lies in the $\SO_7(\BR)$-orbit of the other. In the literature, a metric $g$ which admits a $G_2$-structure to begin with is often called a \emph{$G_2$-metric}.

    Grigorian presents a result originally found in \cite{Bry}. It is the precise parametrisation of isometric $G_2$-structures. Recall that the contraction $a\lrcorner \sigma$ of a $k$-form $\sigma\in\bigwedge^kT^\vee M$ by a vector field $a\in\Gamma(TM)$ constitutes evaluation in the first argument or arguments
    \begin{align*}
        a\lrcorner \sigma:=\sigma(a, -, \dots, -)\in\bigwedge^{k-1}T^\vee M.
    \end{align*}
    We then have the following result:
\end{remark}

\begin{theorem}\cite[(3.6)]{Bry}\cite[Theorem 2.3.]{Gri}\label{Bryanttwist}
    Let $M$ be a 7-dimensional manifold. Suppose $\varphi$ is a positive 3-form on $M$ with associated Riemannian metric $g$. Then any positive 3-form $\Tilde{\varphi}$ for which $g$ is also the associated metric, is given by the following expression:
    \begin{align*}
        \tilde{\varphi}=(a^2-|\alpha|^2)\varphi-2a\alpha\lrcorner(*\varphi)+2\alpha^\flat\wedge(\alpha\lrcorner\varphi).
    \end{align*}
    where $a$ is a scalar function on $M$ and $\alpha$ is a vector field such that
    \begin{align}\label{Aeq}
        a^2+g(\alpha, \alpha)=1.
    \end{align}
\end{theorem}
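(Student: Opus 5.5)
Work pointwise, since both sides are tensorial and the claim is fibrewise. Fix $p\in M$; then $\varphi_p$ and $\tilde\varphi_p$ are definite 3-forms on $T_pM$ inducing the same metric $g_p$ and the same orientation. In an oriented orthonormal frame adapted to $\varphi_p$ we may identify $(T_pM,\varphi_p,g_p)$ with $(\BR^7,\varphi_0,\langle,\rangle)$. By the remark preceding Theorem \ref{Bryanttwist}, $\tilde\varphi_p=A^*\varphi_0$ for some $A\in\SO_7(\BR)$, determined up to right multiplication by $G_2$. So the set of definite 3-forms isometric to $\varphi_0$ is the orbit $\SO_7/G_2$, and the task is to identify this orbit with the set of pairs $(a,\alpha)$ satisfying \eqref{Aeq}.

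The key step is $\SO_7/G_2\cong\BR\BP^7=S^7/\{\pm1\}$. Realise $\BR^7$ as the imaginary octonions $\mathrm{Im}\,\BO$ with $\varphi_0(x,y,z)=\langle xy,z\rangle$. For a unit octonion $q=a+\alpha$, with $a\in\BR$ and $\alpha\in\mathrm{Im}\,\BO$, consider the twisted product $x\circ_q y:=(xq)(\bar q y)$, or equivalently the form $\varphi_q(x,y,z)=\langle (xq)(\bar q y),z\rangle$ restricted to imaginary vectors. Up to conventions, this is the construction Grigorian uses. First check that $x\mapsto$ conjugating constructions of this type give an octonion algebra structure with the same norm, because the norm is multiplicative. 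Hence $\varphi_q$ is definite with metric $g$. Next show the map $S^7\to\SO_7/G_2$, $q\mapsto\varphi_q$, is surjective with fibres $\{\pm q\}$. Dimension counting gives $\dim\SO_7-\dim G_2=21-14=7$. Injectivity modulo $\pm1$ follows because $\varphi_q=\varphi_0$ forces $q$ to lie in the real line. Since $S^7$ is compact, connected and of the same dimension, and the orbit is connected, the image is open and closed, hence everything.

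Then expand $\varphi_q$ in terms of $a$ and $\alpha$ using octonion identities: the Moufang identities, the alternative laws, and $x^2=-|x|^2$ for imaginary $x$. Express the associator through $*\varphi$ via $[x,y,z]=2\,\chi(x,y,z)$, with $\langle\chi(x,y,z),w\rangle=*\varphi(x,y,z,w)$ up to sign convention, and the commutator through $\varphi$. Collecting terms should give $(a^2-|\alpha|^2)\varphi$ from the $a^2$ and $|\alpha|^2$ parts, $-2a\,\alpha\lrcorner*\varphi$ from the cross terms, and $2\alpha^\flat\wedge(\alpha\lrcorner\varphi)$ from the terms quadratic in $\alpha$. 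Globally, $(a,\alpha)$ need only be chosen locally, and the expression is invariant under $(a,\alpha)\mapsto(-a,-\alpha)$, so the statement as given holds pointwise. Smoothness of $a$ and $\alpha$ can be arranged locally if required.

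I expect the main obstacle to be this explicit expansion. Sign conventions for $*\varphi$ and the contraction must match $\varphi_0$ as written, and the term quadratic in $\alpha$ requires the identity $\alpha\times(\alpha\times x)=-|\alpha|^2x+\langle\alpha,x\rangle\alpha$. I would first verify the formula in the special case $\alpha=e_1$, $a=\cos\theta$, by direct comparison with $\varphi_0$, to fix all signs. I would then conclude by $G_2$-equivariance, since $G_2$ acts transitively on unit imaginary vectors and both sides are equivariant.
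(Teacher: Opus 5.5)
The paper does not prove this statement. It quotes it from Bryant and Grigorian and uses it as a black box in the remark after Theorem~\ref{twistequality}, to conclude that algebraic twists are $G_2$-structures. What the paper does prove, in Theorem~\ref{twistequality}, is your third step. It expands $\tfrac12 b_g((\alpha A)(A^{-1}\beta),\gamma)$ for a general unit $A=A_r+A_i$, using Proposition~\ref{relationslemma} and Grigorian's associator formula, and matches the result term by term with the displayed expression. Your shortcut is valid and cheaper: check $q=\cos\theta+\sin\theta\,e_1$, then use $\varphi_{g(q)}=(g^{-1})^*\varphi_q$ for $g\in G_2$ together with the $G_2$-invariance of $\varphi$, $*\varphi$ and the metric. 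Your composition-algebra argument that $\varphi_q$ is definite with metric $g$ is the direct route the paper mentions but declines. Two of your adjustments to the statement are correct. Adding ``same orientation'' is necessary, since $-\varphi$ has metric $g$ but does not lie in the $\SO_7$-orbit of $\varphi$. Reading the conclusion locally is also right: $\tilde\varphi$ fixes $(a,\alpha)$ only up to sign. A global continuous choice exists only if the line subbundle of $\Lambda^0\oplus TM$ it spans is trivial, which can fail when $H^1(M;\BZ/2\BZ)\neq 0$.

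The genuine gap is in the surjectivity step, which is the real content of the statement. You check only that the fibre over $\varphi_0$ is $\{\pm1\}$. You then use injectivity of $\BR\BP^7\to\SO_7/G_2$ everywhere, via invariance of domain, to get an open image. Nothing you have set up carries the base-point computation to other fibres. The only symmetry you use is $G_2$-equivariance, and $G_2$ fixes $\varphi_0$ while moving $q$ only within the spheres $\{a+\alpha : |\alpha|=\mathrm{const}\}$. You have no transitive action on $S^7$ compatible with $q\mapsto\varphi_q$. Equal dimensions plus one good fibre do not make a smooth map open.

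The cheapest repair keeps your ingredients: show that $\varphi_0$ is a regular value. Differentiating $(xq_t)(\bar q_t y)$ along $q_t=\cos t+\sin t\,u$ at $t=0$ gives the associator $(xu)y-x(uy)$. So the differential at $q=1$ sends $u$ to a nonzero multiple of $u\lrcorner *\varphi$, which is injective. The induced map $\BR\BP^7\to\SO_7/G_2$ then has a regular value with exactly one preimage, so its mod $2$ degree is $1$ and it is surjective. This must be done on $\BR\BP^7$; on $S^7$ the preimage count is $2$.

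Alternatively, do the expansion first and recover $(a,\alpha)$ up to sign from $\tilde\varphi$ using $\Lambda^3=\Lambda^3_1\oplus\Lambda^3_7\oplus\Lambda^3_{27}$. The $\Lambda^3_1$-part is $\tfrac{8a^2-1}{7}\varphi$. The $\Lambda^3_7$-part is $-2a\,\alpha\lrcorner*\varphi$. The $\Lambda^3_{27}$-part determines the trace-free part of $\alpha\otimes\alpha$. This gives injectivity on all of $\BR\BP^7$, after which your invariance-of-domain argument goes through. The classical route is the double cover $S^7=\mathrm{Spin}(7)/G_2\to\SO_7/G_2$.
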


\begin{remark}
    Like earlier, $*\varphi$ denotes the Hodge dual of the 4-form $\varphi$. Note that changing the sign of $a$ and $\alpha$ simultaneously leaves the above expression unchanged, due to the relevant operations being linear. Later, we will consider the section $A=a+\alpha\in\Gamma(\Lambda^0\oplus TM)$ with $\Lambda^0$ the trivial bundle over $M$.

    We denote a $G_2$-structure such as the above by $\tilde{\varphi}_A$, where $A$ denotes the pair $(a, \alpha)$ satisfying equation \ref{Aeq}. In light of this, Grigorian remarks that, due to the theorem above, the $G_2$-structures on $M$ are parametrised by sections of the $\BR\BP^7$-bundle $\BP(\Lambda^0\oplus TM)$. This can be verified by the fact that
    \begin{align*}
        \SO_7(\BR)/G_2\cong\BR\BP^7
    \end{align*}
    locally parametrises isometric $G_2$-structures.
\end{remark}

\subsection{The category of $G_2$-structures}

We introduce a definition of the category of $G_2$-structures. Although we will again focus on $G_2$-structures as definite 3-forms, the definition will be natural in light of the classical definition of a $G_2$-structure. In particular, we present two definitions, one for each notion of $G_2$-structure, and show that these agree up to isomorphism of categories. In order to define the morphisms of these respective categories we present two respective definitions of pushforward of a $G$-structure.

\begin{definition}[Pushforward of $G$-structure]
    Let $P\overset{\pi}{\to} M$ be a $G$-structure viewed as a $G$-principal subbundle of $F(TM)$, with inclusion $P\overset{\iota}{\hookrightarrow}F(TM)$. Let $\mathrm{f}:TM\overset{\cong}{\to}TM$ be an automorphism of the tangent bundle $TM$. We then define the \emph{pushforward of $P$ along $\mathrm{f}$}, denoted by $\mathrm{f}P$, as the $G$-principal by $P$ and as a subbundle of $F(TM)$ by the composition
    \begin{align*}
        P\overset{\iota}{\hookrightarrow} F(TM)\overset{\mathrm{f}\circ}{\to} F(TM).
    \end{align*}
\end{definition}

\begin{remark}
    The pushforward $\mathrm{f}P$ has the structure of a $G$-principal bundle as well as a subbundle of $F(TM)$, as $F(TM)\overset{f\circ}{\to} F(TM)$ is an isomorphism of $\GL_n(\BR)$-principal bundles. This means that $\mathrm{f}P$ is a $G$-structure.

    For the remainder of this subsection we fix a connected 7-dimensional Riemannian manifold $M$ with metric $g$.
\end{remark}

\begin{definition}[Category of $G_2$-structures (principal bundles)]\label{g2catprinc}
    Let the category denoted by $\catname{G2str}(M)$ consist of the following data:
    \begin{itemize}
        \item Objects: $G_2$-structures viewed as $G_2$-reductions of the frame bundle $F(TM)$.
        \item Morphisms: Automorphisms $\mathrm{f}:TM\overset{\cong}{\to}TM$ such that $P'=\mathrm{f}P$. We will, by abuse of notation, write this as $\mathrm{f}:P\to P'$.
    \end{itemize}
    We refer to it as the \emph{category of $G_2$-structures}.
\end{definition}

\begin{definition}[Pushforward of 3-forms]
    Let $\varphi\in\bigwedge_+^3\Gamma(T^\vee M)$ be a $G_2$-structure viewed as a definite 3-form. Let $f:\Gamma(TM)\overset{\cong}{\to}\Gamma(TM)$ be a $C^\infty(M)$-linear automorphism of the module $\Gamma(TM)$. We then define the \emph{pushforward of $\varphi$}, denoted by $f_*\varphi$, as the 3-form given by the composition
    \begin{align*}
        \varphi\circ(f^{-1}\otimes f^{-1}\otimes f^{-1}).
    \end{align*}
\end{definition}

\begin{remark}
    Since $f$ is an isomorphism we get that it is locally given by isomorphisms $f_p:T_pM\overset{\cong}{\to}T_pM$. Thus, $f_*\varphi$ is, over every point $p\in M$, contained in the $\GL_7(\BR)$-orbit of $\varphi_p$ and hence in the orbit of $\varphi_0$, by virtue of $\varphi$ being a definite 3-form. This means that $f_*\varphi$ itself is a definite 3-form. 
\end{remark}

\begin{definition}[Category of definite 3-forms]\label{g2cat3form}
    Let the category denoted by $\catname{def3form}(M)$ consist of the following data:
    \begin{itemize}
        \item Objects: $G_2$-structures as definite 3-forms. In other words,
        \begin{align*}
            \mathrm{ob}(\catname{def3form}(M))=\bigwedge_+^3\Gamma(TM)
        \end{align*}
        \item Morphisms: $C^\infty(M)$-linear automorphisms $f:\Gamma(TM)\overset{\cong}{\to}\Gamma(TM)$ such that $\varphi'=f_*\varphi$. We write this as $f:\varphi\to \varphi'$.
    \end{itemize}
    We refer to this as the \emph{category of definite 3-forms}.
\end{definition}

\begin{remark}
    We may refer to both categories as \emph{category of $G_2$-structures} due to the following facts.
\end{remark}

\begin{lemma}\label{G2catdefcoh}
    Let $\varphi$ be a definite 3-form with associated $G_2$-principal bundle $F_\varphi$. Also let $\mathsf{f}:TM\overset{\cong}{\to}TM$ be an isomorphism of smooth vector bundles and $f=\Gamma(\mathsf{f})$ the associated $C^\infty(M)$-module automorphism of $\Gamma(TM)$. We then have that
    \begin{align*}
        \mathsf{f}F_{\varphi}=F_{f_*\varphi}.
    \end{align*}
\end{lemma}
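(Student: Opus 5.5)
The plan is to prove the equality fibrewise, since both sides are subbundles of the frame bundle $F(TM)$ determined by their fibres over each $p\in M$. By definition of pushforward of a $G$-structure, the fibre of $\mathsf{f}F_\varphi$ over $p$ is
\begin{align*}
    (\mathsf{f}F_\varphi)_p=\{\mathsf{f}_p\circ u \; : \; u\in\mathrm{Isom}(\BR^7, T_pM),\ u^*(\varphi_p)=\varphi_0\}.
\end{align*}
The fibre of $F_{f_*\varphi}$ over $p$ is, by proposition \ref{G2definite3form},
\begin{align*}
    (F_{f_*\varphi})_p=\{v\in\mathrm{Isom}(\BR^7, T_pM) \; : \; v^*((f_*\varphi)_p)=\varphi_0\}.
\end{align*}
So the goal is to show these two subsets of $F(TM)_p$ coincide for every $p$.

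The first step is to identify $(f_*\varphi)_p$ with $\varphi_p\circ(\mathsf{f}_p^{-1}\otimes\mathsf{f}_p^{-1}\otimes\mathsf{f}_p^{-1})$. This uses that $f=\Gamma(\mathsf{f})$ and that, under the identifications of theorems \ref{nesth1}, \ref{sectionstensor} and \ref{sectionshoms} together with the localisation of proposition \ref{Fibrelocalprop}, evaluation at $p$ commutes with composition and tensor products. In particular, $\Gamma(\mathsf{f})^{-1}=\Gamma(\mathsf{f}^{-1})$ restricts on fibres to $\mathsf{f}_p^{-1}$. I expect this compatibility between the module-theoretic pushforward and the pointwise one to be the only real obstacle. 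It is where the passage from $C^\infty(M)$-modules to fibres has to be justified: one checks that for sections $s_1,s_2,s_3$ one has $(\varphi\circ f^{-1\otimes3})(s_1,s_2,s_3)(p)=\varphi_p(\mathsf{f}_p^{-1}s_1(p),\dots)$. Because every vector in $T_pM$ extends to a global section (a bump-function argument as in proposition \ref{Fibrelocalprop}), this determines the fibrewise form.

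With that identification the remaining step is a direct computation. For $v=\mathsf{f}_p\circ u$ we get
\begin{align*}
    v^*((f_*\varphi)_p)=\varphi_p\circ(\mathsf{f}_p^{-1}\mathsf{f}_pu)^{\otimes 3}=u^*(\varphi_p),
\end{align*}
so $v^*((f_*\varphi)_p)=\varphi_0$ if and only if $u^*(\varphi_p)=\varphi_0$. Since $u\mapsto\mathsf{f}_p\circ u$ is a bijection of $\mathrm{Isom}(\BR^7,T_pM)$ with inverse $v\mapsto\mathsf{f}_p^{-1}\circ v$, this gives both inclusions $(\mathsf{f}F_\varphi)_p\subseteq(F_{f_*\varphi})_p$ and the reverse. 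Hence the fibres agree for all $p$, and so the subbundles of $F(TM)$ agree.

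Finally, I would note that the $G_2$-principal structures agree too. In both bundles $G_2$ acts by precomposition on $\BR^7$, and this action commutes with postcomposition by $\mathsf{f}_p$. So $\mathsf{f}F_\varphi=F_{f_*\varphi}$ as $G_2$-structures, not merely as subsets.
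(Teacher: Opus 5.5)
Your proposal is correct and follows essentially the same route as the paper: a fibrewise computation that identifies $(f_*\varphi)_p$ with $\varphi_p\circ(\mathsf{f}_p^{-1})^{\otimes 3}$ and then reindexes frames via $u\mapsto\mathsf{f}_p\circ u$. You additionally justify the passage from the module-level pushforward to the pointwise one, and you check that the $G_2$-actions agree; the paper leaves both of these points implicit.
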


\begin{proof}
    By direct calculation we get that
    \begin{align*}
        F_{f_*\varphi}&=\{u:\BR^7\to T_pM\; \mid \; p\in M\; \text{ and }\; \varphi_0=u^*(f(p)_*\varphi_p)\}\\
        &=\{u:\BR^7\to T_pM\; \mid \; p\in M\; \text{ and }\; \varphi_0=(f(p)^{-1}\circ u)^*(\varphi_p)\}\\
        &=\{f(p)\circ w:\BR^7\to T_pM\; \mid \; p\in M\; \text{ and }\; \varphi_0=w^*(\varphi_p)\}\\
        &=\{\mathsf{f}_p\circ w:\BR^7\to T_pM\; \mid \; p\in M\; \text{ and }\; \varphi_0=w^*(\varphi_p)\}\\
        &=\mathsf{f}F_\varphi
    \end{align*}
    which proves the assertion.
\end{proof}

\begin{proposition}[Isomorphism of the above categories]\label{G2catequiv}
    There is an isomorphism of categories
    \begin{align*}
        \catname{def3form}(M)&\overset{\cong}{\longrightarrow}\catname{G2str}(M)\\
        \varphi&\longmapsto F_\varphi
    \end{align*}
\end{proposition}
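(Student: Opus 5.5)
We define a functor $F:\catname{def3form}(M)\to\catname{G2str}(M)$ and show it is bijective on objects and on hom-sets.

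\textbf{The functor.} On objects, $F(\varphi)=F_\varphi$, the $G_2$-reduction constructed in Proposition \ref{G2definite3form}. On morphisms, a $C^\infty(M)$-linear automorphism $f:\Gamma(TM)\to\Gamma(TM)$ with $\varphi'=f_*\varphi$ is sent to the unique bundle automorphism $\mathsf{f}:TM\to TM$ with $\Gamma(\mathsf{f})=f$. This $\mathsf{f}$ exists and is unique by Theorem \ref{nesth1} applied with $F=G=TM$. Since $\Gamma$ is a functor and the correspondence of Theorem \ref{nesth1} is a bijection, $\Gamma$ preserves inverses, so $\mathsf{f}$ is an isomorphism of vector bundles.

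\textbf{Well-definedness on morphisms.} We need $\mathsf{f}:F_\varphi\to F_{\varphi'}$ to be a morphism in $\catname{G2str}(M)$, i.e. $\mathsf{f}F_\varphi=F_{\varphi'}$. This is exactly Lemma \ref{G2catdefcoh}: $\mathsf{f}F_\varphi=F_{f_*\varphi}=F_{\varphi'}$.

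\textbf{Functoriality.} Identities and composition are preserved because $\Gamma$ is a functor and the assignment $f\mapsto\mathsf{f}$ is its inverse on hom-sets. Composition of morphisms in $\catname{def3form}(M)$ is composition of module automorphisms, and it corresponds to composition of bundle automorphisms. One also checks $(g\circ f)_*=g_*\circ f_*$ directly from the definition of pushforward, so composites in each category are again morphisms.

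\textbf{Bijectivity on objects.} We use the 1:1-correspondence between definite 3-forms and $G_2$-structures recalled from \cite{Bry} in the remark after Proposition \ref{G2definite3form}.
\begin{itemize}
\item Surjectivity: every $G_2$-reduction is of the form $F_\varphi$.
\item Injectivity: $\varphi$ is recovered from $F_\varphi$ as $\varphi_p=(u^{-1})^*\varphi_0$ for any $u\in(F_\varphi)_p$. This is independent of the choice of $u$ because $G_2$ stabilises $\varphi_0$.
\end{itemize}

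\textbf{Bijectivity on hom-sets.} Injectivity follows from Theorem \ref{nesth1}. For surjectivity, let $\mathsf{f}:F_\varphi\to F_{\varphi'}$ be a morphism in $\catname{G2str}(M)$, so $\mathsf{f}F_\varphi=F_{\varphi'}$. Put $f=\Gamma(\mathsf{f})$. Lemma \ref{G2catdefcoh} gives $F_{f_*\varphi}=\mathsf{f}F_\varphi=F_{\varphi'}$, and injectivity on objects then gives $f_*\varphi=\varphi'$. Hence $f$ is a morphism $\varphi\to\varphi'$ mapping to $\mathsf{f}$.

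\textbf{Main obstacle.} The hardest point is the injectivity on objects, i.e. that $F_\varphi$ determines $\varphi$. The paper only cites Bryant for this, so the argument should be spelled out explicitly as above using the stabiliser definition of $G_2$. A second point to check is that $\catname{def3form}(M)$ indexes objects by forms, while the 3-form side is phrased using $\bigwedge^3_+\Gamma(TM)$ through the canonical identifications. These must be matched consistently so that "bijective on objects" is meaningful.
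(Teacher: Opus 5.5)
Your proposal is correct and follows the same route as the paper. Objects are matched by Bryant's correspondence, and hom-sets by Theorem \ref{nesth1} combined with Lemma \ref{G2catdefcoh}; your explicit recovery $\varphi_p=(u^{-1})^*\varphi_0$ also justifies the step $\{f: f_*\varphi=\varphi'\}=\{f: F_{f_*\varphi}=F_{\varphi'}\}$, which the paper uses without comment.
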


\begin{proof}
    The remark of Bryant shows that we have a 1:1-correspondence of the objects. Lemma \ref{G2catdefcoh} gives us that there is an isomorphism
    \begin{align*}
        \Hom_{\catname{def3form}(M)}(\varphi, \varphi')&=\{f:f_*\varphi=\varphi'\}\\
        &=\{f:F_{f_*\varphi}=F_{\varphi'}\}\hspace{6mm} \text{$\Gamma$ is an equivalence}\\
        &\cong\{\mathsf{f}:\mathsf{f}F_\varphi=F_{\varphi'}\}\\
        &=\Hom_{\catname{G2str}(M)}(F_\varphi, F_{\varphi'}).
    \end{align*}
    This concludes the proof.
\end{proof}

\begin{remark}
    Due to the above isomorphism between the two categories we will be using $\catname{G2str}(M)$ as the preferred reference for the category of $G_2$-structures.
\end{remark}

\section{Octonion algebras}\label{octalgprelim}

We present an overview of octonion algebras, both over fields and over arbitrary rings. Much of what is presented, and used later in the article, that concerns the former is gathered from \cite{SV}, while the latter references the articles \cite{LPR} and \cite{AlGi}.

\subsection{Octonion algebras over fields}

We begin by recalling the definition of a composition algebra. For the moment, let $k$ denote an arbitrary field.

\begin{definition}
    A \emph{composition algebra} $C$ over $k$ is a not necessarily associative algebra over $k$ with identity element $e$ such that there exists a quadratic form $q$ on C which satisfies
    \begin{align*}
        q(xy) = q(x)q(y) \hspace{6mm} (x, y\in C).
    \end{align*}
    and that the induced bilinear form $b_q$, defined by
    \begin{align*}
        b_q(x, y)=q(x+y)-q(x)-q(y).
    \end{align*}
    is non-degenerate.
\end{definition}

\begin{remark}\label{uniqueoctalg/field}
    We call a composition algebra $C$ that has rank 8 over $R$ an \emph{octonion algebra} over $R$.

    Octonion algebras over fields are more well-behaved than octonion algebras over general rings. In particular, any composition algebra over a field is completely determined by its bilinear form. There exists only one isomorphism class of octonion algebras over the complex numbers $\BC$. The situation over $\BR$ is different in that there are two possible isomorphism classes of octonion algebras; the members of one isomorphism class are called the split octonions over $\BR$, containing zero divisors, and the other is referred to as the non-split octonions over $\BR$, which does not contain any zero divisors.

    An exposition on octonion algebras over fields, and the more general concept of composition algebras over fields, is given in \cite{SV}. We will not state the important results of this source here, but rather refer to those results when needed later.
\end{remark}

\begin{remark}\label{Pfisterformintro}
    There is a characterisation of the quadratic forms $q:C\to \BR$ for which the pair $(C, q)$ admits an octonion multiplication $m$, making the triple $(C, q, m)$ into an octonion algebra over $\BR$, \cite[1.10 Classification over Special Fields]{SV}. These are the \emph{(threefold) Pfister forms} and are given, under a choice of basis $\{e_0, \dots, e_7\}$, by
    \begin{align*}
        e_0^2+\alpha e_1^2+\beta e_2^2+\alpha\beta e_3^2+\gamma e_4^2+\alpha\gamma e_5^2+\beta\gamma e_6^2+\alpha\beta\gamma e_7^2
    \end{align*}
    with non-zero constants $\alpha, \beta, \gamma\in\BR$.
\end{remark}

\subsection{Octonion algebras over arbitrary rings}

This section briefly presents some results on octonion algebras over commutative rings. The main sources for this section are \cite{LPR} and \cite{AlGi}. Throughout we let $k$ denote an arbitrary commutative ring and we assume an algebra over $k$.

\begin{definition}\cite[4.1. The concept of an octonion algebra]{LPR}\label{octalgdef}
    An algebra $C$ over $k$ is called an \emph{octonion algebra} if it is finitely generated and projective of rank 8 as a $k$-module, contains an identity element and admits a norm, i.e. a quadratic form $q_C:C\to k$, uniquely determined by the following two conditions:
    \begin{enumerate}[label=(\roman*)]
        \item The induced symmetric bilinear form
        \begin{align*}
            b_C(x, y)=q_C(x+y)-q_C(x)-q_C(y)
        \end{align*}
        defines a linear isomorphism from the $k$-module $C$ onto its dual $C^\vee$ by the assignment $x\mapsto b_C(x, -)$.

        \item $q_C$ permits composition, i.e., the relation
        \begin{align}\label{multnorm}
            q_C(xy)=q_C(x)q_C(y)
        \end{align}
        holds for all $x, y\in C$.
    \end{enumerate}
\end{definition}

\begin{remark}
    We present fact which unlocks a possible variant of the above definition over a ring $R$ containing $\frac{1}{2}$. In particular, it makes no mention of a quadratic form. It will be used in defining octonion algebra objects, and when doing so we prefer (multi-)linear maps to quadratic forms.
\end{remark}

\begin{lemma}\label{octalglemma}
    Let $R$ be a ring containing $\frac{1}{2}$. Then an $R$-algebra $A$ over $R$ with a quadratic form $q_A$ with associated symmetric bilinear form $b_A$ satisfies
    \begin{align*}
        q_A(ab)=q_A(a)q_A(b)
    \end{align*}
    for every $a, b\in A$ if and only if it satisfies
    \begin{align}\label{bilformmultcomm}
        b_A(x_1, x_2)b_A(y_1, y_2)=b_A(x_1y_1, x_2y_2)+b_A(x_1y_2, x_2y_1)
    \end{align}
    for every $x_1, x_2, y_1, y_2\in A$.
\end{lemma}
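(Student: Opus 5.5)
The plan is to prove both directions by polarization, using only the defining properties of a quadratic form over $R$: $q_A(rx)=r^2q_A(x)$, $q_A(x+y)=q_A(x)+q_A(y)+b_A(x,y)$ with $b_A$ symmetric bilinear, and consequently $b_A(x,x)=2q_A(x)$. The hypothesis $\frac12\in R$ is needed only in the reverse direction.

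\emph{Forward direction.} Assume $q_A(ab)=q_A(a)q_A(b)$ for all $a,b$. First fix $y\in A$. The map $x\mapsto q_A(xy)$ is a quadratic form, since it is $q_A$ precomposed with the linear map of right multiplication by $y$. Its polar form is $(x_1,x_2)\mapsto b_A(x_1y,x_2y)$. Likewise $x\mapsto q_A(x)q_A(y)$ is a quadratic form with polar form $b_A(x_1,x_2)q_A(y)$. Equal quadratic forms have equal polar forms, because each polar form is computed from the quadratic form by $q(x_1+x_2)-q(x_1)-q(x_2)$. Hence
\begin{align*}
    b_A(x_1y,x_2y)=b_A(x_1,x_2)\,q_A(y)\qquad(x_1,x_2,y\in A).
\end{align*}

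Next fix $x_1,x_2$ and regard both sides as functions of $y$. The left side $y\mapsto b_A(x_1y,x_2y)$ is a quadratic form in $y$: it is homogeneous of degree $2$, and its polar form, obtained by bilinearity of $b_A$ and distributivity of the multiplication, is
\begin{align*}
    b_A(x_1(y_1+y_2),x_2(y_1+y_2))-b_A(x_1y_1,x_2y_1)-b_A(x_1y_2,x_2y_2)=b_A(x_1y_1,x_2y_2)+b_A(x_1y_2,x_2y_1).
\end{align*}
The polar form of the right side $b_A(x_1,x_2)q_A(y)$ is $b_A(x_1,x_2)b_A(y_1,y_2)$. Equating the two polar forms gives \eqref{bilformmultcomm}.

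\emph{Reverse direction.} Assume \eqref{bilformmultcomm} and set $x_1=x_2=a$ and $y_1=y_2=b$. This gives $b_A(a,a)b_A(b,b)=2b_A(ab,ab)$. Since $b_A(x,x)=2q_A(x)$, this becomes $4q_A(a)q_A(b)=4q_A(ab)$. Multiplying by $\left(\tfrac12\right)^2$ yields $q_A(ab)=q_A(a)q_A(b)$.

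The only point requiring care is the claim that polar forms of equal quadratic forms agree, which holds immediately from the definition. Beyond that, the main task is bookkeeping in the second polarization step, where one must check that the cross terms match the pattern $x_1y_1,x_2y_2$ and $x_1y_2,x_2y_1$ in \eqref{bilformmultcomm}.
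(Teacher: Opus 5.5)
Your proof is correct and follows the same route as the paper: the reverse direction is exactly the paper's specialization $x_1=x_2=a$, $y_1=y_2=b$, followed by cancelling the factor $4$ using $\frac{1}{2}\in R$. For the forward direction the paper only cites Springer--Veldkamp, whose argument is your two-step linearization (first in $x$ to get $b_A(x_1y,x_2y)=b_A(x_1,x_2)q_A(y)$, then in $y$), so you have written out what the paper leaves to the reference.
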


\begin{proof}
    The definition \ref{octalgdef} implies that the relation \ref{bilformmultcomm} is satisfied; this direction is proven identically to \cite[5]{SV}.

    For the other direction, we assume that $A$ is an $R$-algebra with (non-degenerate) symmetric bilinear form $b_A$ satisfying \ref{bilformmultcomm}. Then
    \begin{align*}
        b_A(a, a)b_A(b, b)&=b_A(ab, ab)+b_A(ab, ab)\\
        &=2b_A(ab, ab).
    \end{align*}
    This means that the associated quadratic form $q_A$, defined by $2q_A(x)=b_A(x, x)$ for $x\in A$, satisfies
    \begin{align*}
        2q_A(a)2q_A(b)=2(2q_A(ab))\overset{\frac{1}{2}\in R}{\iff} q_A(a)q_A(b)=q_A(ab).
    \end{align*}
    This concludes the proof.
\end{proof}
We now return to the context of a general ring $k$ and continue by introducing some notions of maps of octonion algebras.
\begin{definition}[Isometry]
    Let $C$ be a $k$-module and let $q$ be a quadratic form over $C$, i.e., a map (of sets) $q:C\to k$ such that $q(\lambda m)=\lambda^2q(m)$ for every $\lambda\in k$ and every $m\in C$. We call any such pair $(C, q)$ a \emph{quadratic module}.

    For two quadratic modules $(C, q)$ and $(C', q')$, an \emph{isometry} from $(C, q)$ to $(C', q')$ is a $k$-module homomorphism
    \begin{align*}
        \phi:C\to C'
    \end{align*}
    such that $q'\circ\phi=q$. If $\phi$ is as above we say that $\phi$ is an \emph{isometry}, hence $(C, q)$ and $(C', q')$ are in that case said to be \emph{isometric}.
\end{definition}

We now introduce some ways of comparing the multiplications of algebras.

\begin{definition}[Isotopy]\label{isotopedef}
    Let $(A, *_A)$ and $(B, *_B)$ be $k$-algebras. They are said to be \emph{isotopic} if there exist $k$-module isomorphisms $f_i:A\to B$, $i=1, 2, 3$, such that
    \begin{align*}
        f_1(x*_A y)=f_2(x)*_Bf_3(y)
    \end{align*}
    for every $x, y\in A$.
\end{definition}

\begin{remark}
    When the maps $f_i=:f$ for every $i=1, 2, 3$ in the above definition, we see that $f$ is an isomorphism of $k$-algebras. If $f:A\to B$ is an isomorphism of $k$-algebras that are also octonion algebras, then $f$ is an isometry when considered as a map of quadratic modules. Two $k$-algebras are said to be \emph{isotopic} or, alternatively, \emph{isotopes} if there exists an isotopy between them.

    The paper \cite{AlGi} provides a complete parametrisation of the octonion algebras over $k$ with isometric quadratic forms. The parametrisation may be formulated as follows:
\end{remark}

\begin{theorem}\label{twistoctalg}
    Let $C$ be an octonion algebra over $k$ and $q$ its quadratic form. Let $*$ denote its multiplication. Then, if $C'$ is an octonion algebra over $k$ with $(C', q')$ isometric to $(C, q)$, there is isomorphism
    \begin{align*}
        C'\cong C^{a, b}
    \end{align*}
    where $C^{a, b}$ has the same underlying module as $C$ and multiplication $*'$ given by
    \begin{align*}
        x*'y=(x*a)*(b*y)
    \end{align*}
    for $a, b\in C$ satisfying $q(a)=q(b)=1$.
\end{theorem}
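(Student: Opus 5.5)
The plan is to reduce the statement to the classification of octonion algebras with a fixed norm via isotopes, and then to show that every such isotope is isomorphic to one of the form $C^{a,b}$.

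\emph{Step 1: reduce to a common underlying quadratic module.} Let $\phi:(C',q')\to(C,q)$ be an isometry. Transport the multiplication of $C'$ along $\phi$. This gives an octonion algebra $C''$ with underlying quadratic module $(C,q)$, and $\phi$ becomes an isomorphism $C'\cong C''$. It therefore suffices to show that any octonion multiplication $\diamond$ on $(C,q)$, compatible with $q$, is isomorphic to some $C^{a,b}$.

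\emph{Step 2: show that $C''$ is an isotope of $C$ in a controlled way.} Both multiplications compose the same norm $q$. For fixed $x$, the left multiplications $L^\diamond_x$ and $L_x$ are similitudes of multiplier $q(x)$. This follows by linearising the composition law, as in Lemma \ref{octalglemma} when $\frac12\in k$, or directly from the identities of Definition \ref{octalgdef} in general. Write $e''$ for the identity of $C''$ and put $u=e''$. Then $q(u)=1$, and $u$ is invertible in $C$ with inverse its conjugate $\bar u$.

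Next define the maps $f_2:x\mapsto x\diamond e'$ and $f_3:y\mapsto e'\diamond y$ appropriately, so that the pair becomes an isotopy of the form $x\diamond y = f_1^{-1}(f_2(x)f_3(y))$ with $f_i\in \mathrm{O}(q)$. The classical approach, following the argument in \cite{AlGi}, is to invoke the principle of triality. Every isotopy between octonion algebras with the same norm has components in the orthogonal group, and the triple $(f_1,f_2,f_3)$ is determined up to the related-triple structure by $f_1$ together with two norm-one elements.

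\emph{Step 3: normalise.} After composing with the isometry $f_1$, the new multiplication is obtained from $*$ by precomposing with similitudes. Evaluating at the identity of $C$ identifies these similitudes as $R_a$ and $L_b$ for $a,b$ with $q(a)=q(b)=1$. This yields $x*'y=(x*a)*(b*y)$ up to an isomorphism.

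The main obstacle is Step 2 over an arbitrary commutative ring. There we lack the field-theoretic fact that the multiplication is determined by $q$, and we need the related-triples theorem (triality) for $\mathrm{O}(q)$ over $k$, which holds only locally. I would prove the identification fppf-locally, where $C$ is split and the classical argument applies. I would then descend, using that $\Aut(C)=G_2$ and that the set of pairs $(a,b)$ is parametrised by a torsor under $\mathrm{Spin}$. This descent step is where I would lean on \cite{AlGi}.
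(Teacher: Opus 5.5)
The paper gives no proof of this statement. It is quoted as a reformulation of the main theorem of \cite{AlGi}, so the real question is whether your sketch supplies that argument. It does not: Steps~1 and~3 are fine, but Step~2, which you rightly single out as the obstacle, is the entire theorem and is not proved.

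On Step~3, there is one slip: you must evaluate at the identity $e_\star$ of the new algebra, which in general is not $1_C$. Suppose $x\star y=g_2(x)*g_3(y)$ with $g_2,g_3\in\mathrm{O}(q)$ and $\star$ unital. Setting $y=e_\star$ gives $R_c\circ g_2=\id$ for $c=g_3(e_\star)$, so $g_2=R_{\bar c}$. Symmetrically, $g_3=L_{\bar d}$ for $d=g_2(e_\star)$. Hence $(C,\star)=C^{\bar c,\bar d}$ on the nose, with $q(\bar c)=q(\bar d)=q(e_\star)=1$.

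The gap is in Step~2, which presupposes what is to be proved. Granting Step~3, the existence of an isotopy $f_1(x\diamond y)=f_2(x)*f_3(y)$ with orthogonal components is essentially the theorem, and you never construct one. The maps you propose, $x\mapsto x\diamond e''$ and $y\mapsto e''\diamond y$, are the identity, since $e''$ is the unit of $\diamond$. Triality describes an isotopy once it exists; it does not produce one. Your claim that isotopy components are orthogonal is also false as stated: $(\id,\lambda\,\id,\lambda^{-1}\id)$ is an isotopy $C\to C$ for every unit $\lambda$. Over a field, or over a local ring by \cite{bix}, an isotopy exists trivially because $C''\cong C$. Over a general $k$, that is exactly what fails.

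Nor is the local-to-global step a descent of an identification $C''\cong C^{a,b}$. Locally (already Zariski-locally) you only get $C''\cong C=C^{1,1}$, and since pairs $(a,b)$ are far from unique, local choices do not glue. What the local isomorphisms actually give is an $\Aut(C)$-torsor whose class lies in $\ker\bigl(H^1(k,\Aut(C))\to H^1(k,\mathrm{O}(q))\bigr)$. That class comes from a $k$-point of $\mathrm{O}(q)/\Aut(C)$, with twisted multiplication $x\diamond y=g(g^{-1}x*g^{-1}y)$ for local representatives $g$.

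The substance of \cite{AlGi} is to describe this quotient by triality, realising $\mathrm{Spin}(q)$ as related triples $t=(t_1,t_2,t_3)$ with $t_1(x*y)=t_2(x)*t_3(y)$.
\begin{itemize}
\item If $g=t_1$, the computation of your Step~3 gives $g(g^{-1}x*g^{-1}y)=(x*\overline{t_3(1)})*(\overline{t_2(1)}*y)$.
\item The map $t\mapsto(t_2(1),t_3(1))$ is invariant under $t\mapsto t\cdot(h,h,h)$ for $h\in\Aut(C)$.
\item Its stabiliser at $(1,1)$ is exactly $\Aut(C)$, since $t_2(1)=t_3(1)=1$ forces $t_1=t_2=t_3$.
\item So it identifies $\mathrm{Spin}(q)/\Aut(C)$ with the scheme of pairs of norm-one elements, and this is what makes the pair global.
\end{itemize}
Passing between $\mathrm{O}(q)$ and $\mathrm{Spin}(q)$ is a further part of that work.

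Your phrase ``pairs $(a,b)$ parametrised by a torsor under $\mathrm{Spin}$'' reverses the roles: $\mathrm{Spin}(q)$ is an $\Aut(C)$-torsor over the homogeneous space of pairs. Citing \cite{AlGi} for all of this, as the paper does, is legitimate. But then Step~2 is not a step of your proof; it is the whole theorem.
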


\begin{remark}
    As an isotope of $C$, an isotopy $C\to C^{a, b}$ is, with the notation in definition \ref{isotopedef}, defined by $f_1=\id$, $f_2=R_a$ and $f_3=L_b$, with the latter two maps being right- and left multiplication respectively.
\end{remark}

\begin{remark}
    This is related to the quotient group scheme $\mathrm{Spin}(q_C)/\Aut(C)\cong\BS^7$, with $\mathrm{Spin}(q_C)$ the double cover of $\SO(q_C)$ and $\Aut(C)$ the automorphism $k$-group scheme of $C$, which is semisimple and of type $G_2$, appears in the proof of theorem \ref{twistoctalg}, and relates to the Lie group quotient
    \begin{align*}
        \SO_7(\BR)/G_2\cong\BR\BP^7
    \end{align*}
    which will play an important role later.
    
    The relations between the isotopes $C^{a, b}$ appearing in \ref{twistoctalg} are studied in \cite[Proposition 2.4.]{AlGi}. Using isomorphisms appearing in the proposition, theorem \ref{twistoctalg} can be reduced further to any octonion algebra $C'$ isometric to $C$ being isomorphic to $C^{a, a^{-1}}$ where $q_C(a)=1$. In particular,
    \begin{align*}
        C^{a, b}&\cong C^{1, ba^{-1}} &&\text{\cite[Proposition 2.4. (3)]{AlGi}}\\
        &\cong C^{1, q(ba^{-1})(ba^{-1})^{-1}}&&\text{multiplication by scalar is linear}\\
        &\cong C^{q(ba^{-1})(ba^{-1})^{-1}, q(ba^{-1})^{-1}ba^{-1}}&&\text{\cite[Proposition 2.4. (2)]{AlGi}}.
    \end{align*}
\end{remark}

\subsection{The category of octonion algebras}

We define the category of octonion algebras, with the category $\proj_R$ as a starting point. Since an octonion algebra $C$ over a commutative ring $R$ is required to be a projective module of rank 8, morphisms of octonion algebras are required to be linear isomorphisms of the base module in order to preserve this structure.

\begin{definition}\label{octalgpushf}
    Let $C$ and $C'$ be rank 8 projective modules over a ring $R$ and let $f:C\to C'$ be an $R$-linear isomorphism, i.e., an isomorphism of $R$-modules. If we let $C$ be an octonion algebra, and let $m_C, e_C, b_C$ be the multiplication, identity respectively bilinear form, we define the multiplication, identity and bilinear form on the target by the following
    \begin{align*}
        (f_*m_C)(x\otimes y)&:=f(m(f^{-1}(x)\otimes f^{-1}(y))),\\
        (f_*e)&:=f(e),\\
        (f_*b_C)(x\otimes y)&:=b_C(f^{-1}(x)\otimes f^{-1}(y)).
    \end{align*}
\end{definition}

\begin{definition}[Category of octonion algebras over $R$]\label{octalgcatdef}
    Let $R$ be a commutative ring. Let the category $\Octalg_R$ consist of the following data:
    \begin{itemize}
        \item Objects: Octonion algebras over $R$ as in definition \ref{octalgdef}.
        \item Morphisms: Let $C$ and $C'$ be octonion algebras over $R$ with $m_C, e_C, b_C$ and $m_{C'}, e_{C'}, b_{C'}$ the multiplication, identity and bilinear form of $C$ and $C'$ respectively. An isomorphism of $R$-modules
        \begin{align*}
            f:C\overset{\cong}{\to} C'
        \end{align*}
        is a \emph{morphism of octonion algebras} if it satisfies the following:
        \begin{align}\label{octalgmorphrels}
            m_{C'}&=f_*m_C,\nonumber\\
            e_{C'}&=f_*e_C,\\
            b_{C'}&=f_*b_C.\nonumber
        \end{align}
    \end{itemize}
\end{definition}

\begin{remark}
    The second condition of \ref{octalgmorphrels} follows from the first condition, since the unit is unique and since the map $f:C\to C'$ is bijective. Due to there being an inclusion $\mathrm{Span}_R\{e_C\}\hookrightarrow C$ for any octonion algebra $C$ and since the unit is preserved under morphisms of octonion algebras, this means that such a morphism is only determined by its values on and in the parts of $C$ and $C'$ that do not belong to the spans of the respective identities.

    By definition, this is a category in which every morphism is an isomorphism. This makes $\Octalg_R$ a groupoid.
\end{remark}

\section{Octonion bundles}\label{obsect}

In this section we pave the way for describing $G_2$-structures as octonion algebras. In the first subsection we define octonion algebra objects in $\Vecbun(M)$. They are constructed in such a way that, by taking the sections of their defining data, they correspond to octonion algebras over $C^\infty(M)$. We use this in the second subsection to define octonion bundles, which are octonion algebra objects of the category of vector bundles over $M$ which we will show correspond to $G_2$-structures.

\subsection{Octonion algebra objects in $\Vecbun(M)$}\label{octalgobjsection}

Before presenting the definition of an octonion algebra object, let us, for any vector bundle $F$, define $\mathsf{T}_{1324}$ and $\mathsf{T}_{1423}$ as the twist maps
\begin{align*}
    \mathsf{T}_{1324}:F\otimes F\otimes F\otimes F&\longrightarrow F\otimes F\otimes F\otimes F\\
    x_1\otimes x_2\otimes x_3\otimes x_4 &\longmapsto x_1\otimes x_3\otimes x_2\otimes x_4
\end{align*}
and
\begin{align*}
    \mathsf{T}_{1423}:F\otimes F\otimes F\otimes F&\longrightarrow F\otimes F\otimes F\otimes F\\
    x_1\otimes x_2\otimes x_3\otimes x_4 &\longmapsto x_1\otimes x_4\otimes x_2\otimes x_3.
\end{align*}
Note that these are isomorphisms of vector bundles. Also recall that the sum of two morphisms of vector bundles is itself a morphism of vector bundles.

\begin{definition}[Octonion algebra object]\label{octalgobjdef}
    Let $M$ be a smooth manifold and let $\Lambda^0$ denote the trivial bundle. We define an \emph{octonion algebra object} (in the category of vector bundles over $M$) to be a smooth vector bundle $F\overset{\pi}{\to} M$ of rank 8 together with morphisms of vector bundles $\mathsf{m}:F\otimes F\to F$, $\mathsf{e}:\Lambda^0\to F$ and $\mathsf{b}:F\otimes F\to \Lambda^0$ which make the following diagrams commute:
    \begin{center}
        \begin{tikzcd}
        \Lambda^0\otimes F \arrow[r, "\mathsf{e}\otimes\mathrm{Id}"] \arrow[rd, "\cong"'] & F\otimes F \arrow[d, "\mathsf{m}"] & F\otimes \Lambda^0 \arrow[l, "\mathrm{Id}\otimes \mathsf{e}"'] \arrow[ld, "\cong"] \\
                                                                                   & F                         &                                                                            
        \end{tikzcd}
        \hspace{8mm}
        \begin{tikzcd}
        F\otimes F \arrow[r, "\mathsf{b}"] \arrow[d, "\mathsf{T}"'] & \Lambda^0 \\
        F\otimes F \arrow[ru, "\mathsf{b}"']               &     
        \end{tikzcd}
        \end{center}
        \begin{center}
        \begin{tikzcd}
        F\otimes F\otimes F\otimes F \arrow[d, "\mathsf{b}\otimes \mathsf{b}"'] \arrow[r, "{\lambda_{(\mathsf{m}, \mathsf{b})}}"] & \Lambda^0 \\
        \Lambda^0\otimes\Lambda^0 \arrow[ru, "\cong"']                                                                            &          
        \end{tikzcd}
    \end{center}
    Here, $\lambda_{(\mathsf{m}, \mathsf{b})}$ denotes the sum of morphisms
    \begin{align*}
        \mathsf{b}\circ(\mathsf{m}\otimes\mathsf{m})\circ \mathsf{T}_{1324}+\mathsf{b}\circ(\mathsf{m}\otimes\mathsf{m})\circ \mathsf{T}_{1423}.
    \end{align*}
    Further, we require for $\mathsf{b}$ to be nowhere degenerate, i.e., the map $v\mapsto \mathsf{b}_p(v, -)$ from $F_p$ to $F_p^\vee$ is an isomorphism of vector spaces for every point $p\in M$.
\end{definition}

\begin{remark}
    This way of defining an octonion algebra in terms of commuting diagrams has been done for instance in \cite{daza-garcia_octonions_2024}, in this case to construct composition algebra objects in the category of super vector spaces. Indeed, the above definition works in any symmetric tensor category (over a field of characteristic not 2), but also in the category $\proj_R$ for any ring $R\ni\frac{1}{2}$, since we have a well defined notion of rank, tensor product, twist maps and translation between quadratic and bilinear forms.
\end{remark}

Before presenting the following proposition, we provide the following lemma:

\begin{lemma}\label{localdegen=sectiondegen}
    Let $M$ be a smooth manifold and let $F$ be a vector bundle over $M$. Let $\mathsf{b}:F\otimes F\to \Lambda^0$ be a morphism of vector bundles. Then $\mathsf{b}_p:F_p\otimes F_p\to \Lambda^0_p=\BR$ is non-degenerate for every point $p\in M$ if and only if the morphism $b:\Gamma(F)\otimes\Gamma(F)\to \Gamma(\Lambda^0)\cong C^\infty(M)$ given by $b=\Gamma(\mathsf{b})$ is non-degenerate as a bilinear form of $C^\infty(M)$-modules.
\end{lemma}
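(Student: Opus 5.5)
The plan is to make non-degeneracy a statement about a single vector bundle morphism and then transport it through the equivalence $\Gamma$. To $\mathsf{b}$ we attach the adjoint morphism of vector bundles $\mathsf{b}^\flat:F\to F^\vee=\Hom(F,\Lambda^0)$, given fibrewise by $v\mapsto \mathsf{b}_p(v,-)$. Saying that $\mathsf{b}_p$ is non-degenerate for every $p\in M$ is exactly saying that each $\mathsf{b}^\flat_p:F_p\to F_p^\vee$ is an isomorphism of vector spaces.

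Next we use the standard fact that a morphism of smooth vector bundles which is a linear isomorphism on every fibre is an isomorphism of vector bundles. Its fibrewise inverse is smooth, because in local trivialisations it is given by inverting a smoothly varying invertible matrix. So $\mathsf{b}$ is nowhere degenerate if and only if $\mathsf{b}^\flat$ is an isomorphism in $\Vecbun(M)$.

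On the module side, $b=\Gamma(\mathsf{b})$ is non-degenerate when $b^\flat:\Gamma(F)\to\Gamma(F)^\vee=\Hom_{C^\infty(M)}(\Gamma(F),C^\infty(M))$, $s\mapsto b(s,-)$, is an isomorphism. Theorem \ref{sectionshoms}, applied with $\eta=\Lambda^0$, identifies $\Gamma(F^\vee)$ with $\Gamma(F)^\vee$. The main point to verify is that under this identification $\Gamma(\mathsf{b}^\flat)$ corresponds to $b^\flat$. For a section $s$, the section $\Gamma(\mathsf{b}^\flat)(s)$ sends $p$ to $\mathsf{b}_p(s(p),-)$. The isomorphism of theorem \ref{sectionshoms} sends this to the module map $t\mapsto[p\mapsto \mathsf{b}_p(s(p),t(p))]$, and that map is $b(s,-)$ by theorem \ref{sectionstensor}. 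So the identification is a pointwise evaluation check.

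Once this is established, $\Gamma$ is an equivalence of categories, so by theorem \ref{nesth1} it reflects and preserves isomorphisms. Hence $\mathsf{b}^\flat$ is an isomorphism if and only if $b^\flat$ is, which proves the lemma.

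The main obstacle is the direction from the module statement to the fibres, which relies on the ``fibrewise isomorphism implies bundle isomorphism'' argument run backwards through $\Gamma$. If one prefers to avoid it, there is a direct route via proposition \ref{Fibrelocalprop}. Tensoring the isomorphism $b^\flat$ with the residue field $\kappa(\mathfrak m_p)\cong\BR$ gives an isomorphism $F_p\to (\Gamma(F)^\vee)\otimes\kappa(\mathfrak m_p)$. Since $\Gamma(F)$ is finitely generated projective, dualising commutes with base change, so the target is $F_p^\vee$. The resulting map is $\mathsf{b}^\flat_p$, so each $\mathsf{b}_p$ is non-degenerate.
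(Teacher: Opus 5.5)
Your proof is correct and follows essentially the same route as the paper. Both arguments identify $s\mapsto b(s,-)$, via $\Gamma(F)^\vee\cong\Gamma(F^\vee)$, with $\Gamma$ of the bundle map $F\to F^\vee$, $v\mapsto\mathsf{b}_p(v,-)$, and then use two facts: $\Gamma$ is an equivalence, and a bundle morphism is an isomorphism iff it is one on every fibre (the paper asserts this, you justify it via the smooth matrix inverse).

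One small correction: the smooth-inverse argument is what the fibres-to-module direction needs. The module-to-fibres direction you flag as the "main obstacle" only uses the trivial fact that a bundle isomorphism restricts to isomorphisms on fibres, so your alternative route through proposition \ref{Fibrelocalprop} is valid but unnecessary.
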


\begin{proof}
    Denote by $P=\Gamma(F)$, the sections of $F$ over $M$. The map $b:P\otimes P\to C^\infty(M)$ is induced by the sections of the map $\mathsf{b}$.
    
    We have $\Gamma(\mathsf{b})\in\Gamma(\Hom(F\otimes F, \Lambda^0))$. Using the second natural isomorphism of theorem \ref{sectionstensor} we can identify $\Gamma(\mathsf{b})$ with an element of $\Hom(\Gamma(F\otimes F), \Gamma(\Lambda^0))$, namely the map $b'$ defined by
    \begin{align*}
        b':\Gamma(F\otimes F)&\to \Gamma(\Lambda^0)\\
        [s:M\to F\otimes F] &\mapsto b'(s)
    \end{align*}
    where $b'(s):M\to \Lambda^0$ sends a point $p\in M$ to $\Gamma(\mathsf{b})(p)\circ s(p)$. In turn, using the natural isomorphism of theorem \ref{sectionstensor}, we get that the map $b$ is given by sending a section $s\otimes t\in P\otimes P$ to the section
    \begin{align}\label{bdef}
        M&\overset{b(s\otimes t)}{\longrightarrow} \Lambda^0\\ \nonumber
        p&\longmapsto \Gamma(\mathsf{b})(p)\circ(s(p)\otimes t(p)).
    \end{align}
    Note that there is equality $\Gamma(\mathsf{b})(p)=\mathsf{b}_p$ for $p\in M$.

    The bilinear form $b:P\otimes P\to \Gamma(\Lambda^0)$ is non-degenerate if and only if the map
    \begin{align*}
        P&\to P^\vee\\
        s&\mapsto b(s, -)
    \end{align*}
    is an isomorphism. Since there is an isomorphism
    \begin{align*}
        P^\vee=\Gamma(F)^\vee=\Hom(\Gamma(F), \Gamma(\Lambda^0))\cong\Gamma(\Hom(F, \Lambda^0))=\Gamma(F^\vee)
    \end{align*}
    the above map $P\to P^\vee$ being an isomorphism is equivalent to the induced map
    \begin{align*}
        P&\to \Gamma(F^\vee)
    \end{align*}
    being an isomorphism. This map sends a section $s\in P$ to the section of $\Hom(F, \Lambda^0)$ which sends the point $p\in M$ to the map
    \begin{align*}
        F_p&\to \Lambda^0_p\\
        v&\mapsto (b(s, -))(t)(p)=b(s, t)(p),
    \end{align*}
    where $t\in\Gamma(F)$ is a section chosen to satisfy $t(p)=v$ for $v\in F_p$, making the above map well-defined. Such sections always exist thanks to the existence of bump-functions. Again using the natural isomorphism to identify the map $\Gamma(F)\to \Gamma(F^\vee)$ with a section in $\Gamma(\Hom(F, F^\vee))$, we see that this resulting section is defined by sending a point $p\in M$ to
    \begin{align*}
        F_p&\to F_p^\vee\\
        v&\mapsto [w\mapsto b(s, t)(p)]
    \end{align*}
    by choosing for $v, w\in F_p$ sections $s, t\in P$ with $s(p)=v$ and $t(p)=
    w$. Recalling from the expression \ref{bdef} that $b(s, t)(p)=\mathsf{b}_p(s(p)\otimes t(p))$, this tells us that this is the section of the map of bundles $F\to F^\vee$ given for every point $p\in M$ by
    \begin{align}\label{localmap}
        F_p&\to F_p^\vee \\
        v&\mapsto [w\mapsto \mathsf{b}_p(v, w)]=\mathsf{b}_p(v, -). \nonumber
    \end{align}
    Since $\Gamma$ is an equivalence of categories, this morphism of bundles is an isomorphism if and only if the map $P\to P^\vee$, $s\mapsto b(s, -)$, is an isomorphism. The bundle map $F\to F^\vee$ is an isomorphism if and only if for every point $p\in M$ the map \ref{localmap} is an isomorphism. This concludes the proof.
\end{proof}

We now show that the datum of an octonion algebra object is the same as the datum of an octonion algebra over $C^\infty(M)$. We later upgrade this to an equivalence of categories.

\begin{proposition}\label{objalgcorrprop}
    Octonion algebra objects over $M$ correspond 1:1 to octonion algebras over $C^\infty(M)$ by the global sections functor $\Gamma$.
\end{proposition}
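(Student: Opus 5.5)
The plan is to transport each piece of data and each axiom of definition \ref{octalgobjdef} across the monoidal equivalence $\Gamma:\Vecbun(M)\to\proj_{C^\infty(M)}$. We then check that the translated conditions are exactly those of definition \ref{octalgdef}, in the bilinear form of lemma \ref{octalglemma}; this is legitimate since $\frac{1}{2}\in C^\infty(M)$.

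\textbf{From objects to algebras.} Start from an octonion algebra object $(F,\mathsf{m},\mathsf{e},\mathsf{b})$ and set $C=\Gamma(F)$. By theorem \ref{nesth2} this is finitely generated projective, and it has constant rank $8$: localising at $\mathfrak{m}_p$ and passing to the residue field gives $F_p\cong\BR^8$ by proposition \ref{Fibrelocalprop}, and $M$ is connected. Using theorem \ref{sectionstensor} to identify $\Gamma(F\otimes F)\cong C\otimes C$ and $\Gamma(\Lambda^0)=C^\infty(M)$, we define
\begin{align*}
m=\Gamma(\mathsf{m}):C\otimes C\to C,\qquad e=\Gamma(\mathsf{e})(1)\in C,\qquad b=\Gamma(\mathsf{b}):C\otimes C\to C^\infty(M).
\end{align*}
Because $\Gamma$ is a functor compatible with the tensor product, it carries commutative diagrams to commutative diagrams. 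It also sends the twist maps $\mathsf{T}$, $\mathsf{T}_{1324}$, $\mathsf{T}_{1423}$ to the corresponding permutations of tensor factors, and it is additive on morphisms, so $\Gamma(\lambda_{(\mathsf{m},\mathsf{b})})=\lambda_{(m,b)}$. Reading off the three diagrams then gives:
\begin{itemize}
\item $e$ is a two-sided unit for $m$;
\item $b$ is symmetric;
\item relation \ref{bilformmultcomm} holds for $b$ and $m$.
\end{itemize}
Nowhere degeneracy of $\mathsf{b}$ is equivalent to non-degeneracy of $b$ by lemma \ref{localdegen=sectiondegen}. Set $q(x)=\tfrac12 b(x,x)$. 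Lemma \ref{octalglemma} then shows that $q$ permits composition, so $(C,m,e,q)$ is an octonion algebra over $C^\infty(M)$.

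\textbf{From algebras to objects.} Given an octonion algebra $C$ over $C^\infty(M)$, theorem \ref{nesth2} provides a rank 8 bundle $F$ with $\Gamma(F)\cong C$. Theorems \ref{nesth1} and \ref{sectionstensor} show that the maps $m$, $e$ (viewed as $C^\infty(M)\to C$, $r\mapsto re$) and the polar form $b_C$ come from unique bundle morphisms $\mathsf{m},\mathsf{e},\mathsf{b}$. Faithfulness of $\Gamma$ lets us reflect the identities (unit, symmetry, relation \ref{bilformmultcomm} via lemma \ref{octalglemma}) back to commutativity of the diagrams. Lemma \ref{localdegen=sectiondegen} again handles non-degeneracy.

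The two assignments are mutually inverse up to the canonical isomorphisms. The uniqueness part of theorem \ref{nesth1} guarantees that the morphisms are determined by their sections, and since $2$ is invertible, the quadratic form and its bilinear form determine each other.

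The main obstacle is checking carefully that the sum $\lambda_{(\mathsf{m},\mathsf{b})}$ and the twist maps correspond under $\Gamma$ to the algebraic expressions in relation \ref{bilformmultcomm}. This requires using the naturality of the isomorphism of theorem \ref{sectionstensor} on fourfold tensor products. I would do this by evaluating both sides on elementary tensors $s_1\otimes s_2\otimes s_3\otimes s_4$ pointwise at each $p\in M$, where everything reduces to the fibrewise formula. A smaller point to check is that "rank 8" in definition \ref{octalgdef} means constant rank, which follows from connectedness of $M$.
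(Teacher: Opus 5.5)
Your proposal is correct and follows essentially the same route as the paper. Both arguments transport $(F,\mathsf m,\mathsf e,\mathsf b)$ and the three defining diagrams across the monoidal equivalence $\Gamma$, using theorem \ref{sectionstensor} for the twist maps and additivity for $\lambda_{(\mathsf m,\mathsf b)}$. Both then invoke lemma \ref{localdegen=sectiondegen} for non-degeneracy and lemma \ref{octalglemma} (with $\frac12\in C^\infty(M)$) to convert relation \ref{bilformmultcomm} into multiplicativity of the norm, and run the argument backwards with $\Gamma^{-1}$. Your pointwise check on elementary tensors is a more hands-on version of the paper's factorisation $\mathsf{T}_{1423}=(\Id_1\otimes\mathsf{T}\otimes\Id_4)\circ(\Id_{12}\otimes\mathsf{T})$. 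Your remark on constant rank via connectedness of $M$ (equivalently, $C^\infty(M)$ has no nontrivial idempotents, which also covers maximal ideals not of the form $\mathfrak{m}_p$) fills in a detail the paper passes over.
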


\begin{proof}
    The module $P=\Gamma(F)$ is a projective module of rank 8 over $C^\infty(M)$ if and only if $F$ is a rank 8 vector bundle over $M$. A bilinear form $b:P\otimes P\to \Lambda^0$ is non-degenerate if and only if every $\mathsf{b}_p:F_p\otimes F_p\to \Lambda^0_p$ is non-degenerate and this follows from lemma \ref{localdegen=sectiondegen}. Note that the latter is part of definition \ref{octalgobjdef}, so the non-degeneracy statements are equivalent. The natural isomorphism of \ref{sectionstensor} ensures that the map
    \begin{align*}
        \Gamma(\mathsf{T}):\Gamma(F\otimes F)&\longrightarrow\Gamma(F\otimes F)\\
        [s:p\mapsto s_1(p)\otimes s_2(p)]&\longmapsto[\Gamma(\mathsf{T})(s):p\mapsto s_2(p)\otimes s_1(p)]
    \end{align*}
    is naturally isomorphic to the map 
    \begin{align*}
        T:\Gamma(F)\otimes\Gamma(F)&\longrightarrow\Gamma(F)\otimes\Gamma(F)\\
        s_1\otimes s_2&\longmapsto s_2\otimes s_1.
    \end{align*}
    By functoriality, and theorem \ref{sectionstensor}, this means that the twist maps $\mathsf{T}_{1324}$ and $\mathsf{T}_{1423}$ are sent to the maps
    \begin{align*}
        T_{1324}:P\otimes P\otimes P\otimes P&\overset{\cong}{\longrightarrow} P\otimes P\otimes P\otimes P\\
        x_1\otimes x_2\otimes x_3\otimes x_4 &\longmapsto x_1\otimes x_3\otimes x_2\otimes x_4
    \end{align*}
    and
    \begin{align*}
        T_{1423}:P\otimes P\otimes P\otimes P&\overset{\cong}{\longrightarrow} P\otimes P\otimes P\otimes P\\
        x_1\otimes x_2\otimes x_3\otimes x_4 &\longmapsto x_1\otimes x_4\otimes x_2\otimes x_3,
    \end{align*}
    in particular since $\mathsf{T}_{1324}=\Id_1\otimes \mathsf{T}\otimes \Id_4$ and $\mathsf{T}_{1423}=(\Id_1\otimes\mathsf{T}\otimes\Id_4)\circ(\Id_{12}\otimes \mathsf{T})$. The natural isomorphism $\Gamma(F\otimes F)\cong\Gamma(F)\otimes\Gamma(F)$ similarly gives us maps
    \begin{align*}
        m:P\otimes P\to P, \hspace{6mm} b:P\otimes P\to C^\infty(M), \hspace{6mm} T:P\otimes P\to P\otimes P
    \end{align*}
    in the category $\proj_{C^\infty(M)}$, corresponding to $\Gamma(\mathsf{m})$, $\Gamma(\mathsf{b})$ and $\Gamma(\mathsf{T})$. Due to $\Gamma$ being a functor of additive categories, this in turn means that $\lambda_{(\mathsf{m}, \mathsf{b})}$ is sent to the map
    \begin{align*}
        \lambda_{m, b}:=b\circ(m\otimes m)\circ T_{1324}+b\circ(m\otimes m)\circ T_{1423}.
    \end{align*}
    Finally, since $\Gamma$ is a functor, the maps given by these identifications satisfy
    \begin{center}
        \begin{tikzcd}
        C^\infty(M)\otimes P \arrow[r, "e\otimes\mathrm{Id}"] \arrow[rd, "\cong"'] & P\otimes P \arrow[d, "m"] & P\otimes C^\infty(M) \arrow[l, "\mathrm{Id}\otimes e"'] \arrow[ld, "\cong"] \\
                                                                                   & P                         &                                                                            
        \end{tikzcd}
        \hspace{8mm}
        \begin{tikzcd}
        P\otimes P \arrow[r, "b"] \arrow[d, "T"'] & C^\infty(M) \\
        P\otimes P \arrow[ru, "b"']               &     
        \end{tikzcd}
        \end{center}
        \begin{center}
        \begin{tikzcd}
        P\otimes P\otimes P\otimes P \arrow[d, "b\otimes b"'] \arrow[r, "{\lambda_{(m, b)}}"] & C^\infty(M) \\
        C^\infty(M)\otimes C^\infty(M) \arrow[ru, "\cong"']                                                                            &          
        \end{tikzcd}
    \end{center}
    where, again, $\lambda_{m, b}:=b\circ(m\otimes m)\circ T_{1324}+b\circ(m\otimes m)\circ T_{1423}$.
    
    The map $m$ determines a multiplication on $P$ with unit given by $e$, by the upper left diagram. Furthermore, $P$ admits a bilinear form $b$, by the fact that it is defined on the tensor product $P\otimes P$, which is symmetric by the top right diagram. The lower diagram means that for $x_1, x_2, y_1, y_2\in P$,
    \begin{align*}
        b(x_1, x_2)b(y_1, y_2)=b(x_1y_1, x_2y_2)+b(x_1y_2, x_2y_1).
    \end{align*}
    By lemma \ref{octalglemma}, this is the second condition of definition \ref{octalgdef}. Note that these relations are not only implied by but equivalent to the corresponding diagrams commuting in definition \ref{octalgobjdef}; we see this from evaluating the maps on elements. Thus, using the functor $\Gamma^{-1}$ for the other direction analogously, we get that there is a 1:1-correspondence between octonion algebras over $C^\infty(M)$ and octonion algebra objects over $\Vecbun(M)$.
\end{proof}

This allows us to define morphisms of octonion algebra objects in the same way as morphisms of octonion algebras over a ring, and this updates the correspondence to an equivalence between the category of octonion algebra objects over $M$ and the category of octonion algebras over $C^\infty(M)$.

\begin{definition}[Category of octonion algebra objects over $\Vecbun(M)$]\label{octalgobjcatdef}
    Let $M$ be a smooth manifold and let $\Vecbun(M)$ be its category of smooth vector bundles. We let the category $\catname{octobj}_{\Vecbun(M)}$ consist of the following data:
    \begin{itemize}
        \item Objects: Octonion algebra objects over $M$ as defined in \ref{octalgobjdef}.
        
        \item Morphisms: Let $(F, \mathsf{m}, \mathsf{e}, \mathsf{b})$ and $(F', \mathsf{m}', \mathsf{e}', \mathsf{b}')$ be octonion algebra objects in $\Vecbun(M)$. An isomorphism
        \begin{align*}
            f:F\overset{\cong}{\to} F'
        \end{align*}
        is a \emph{morphism of composition algebra objects} if it satisfies the following:
        \begin{align}\label{octalgobjmorphrels}
            \mathsf{m'}&=f_*\mathsf{m},\nonumber\\
            \mathsf{e'}&=f_*\mathsf{e},\\
            \mathsf{b'}&=f_*\mathsf{b}.\nonumber
        \end{align}
    \end{itemize}
\end{definition}

\begin{remark}
    The pushforward is defined analogously to \ref{octalgpushf}, i.e., the relations \ref{octalgobjmorphrels} can be written as
    \begin{align*}
        \mathsf{m'}&=f\circ\mathsf{m}\circ(f^{-1}\otimes f^{-1}),\\
        \mathsf{e'}&=f\circ\mathsf{e}, \\
        \mathsf{b'}&=\mathsf{b}\circ(f^{-1}\otimes f^{-1}).
    \end{align*}
    Since $f:F\to F'$ is assumed to be an isomorphism, $\catname{octobj}_{\Vecbun(M)}$ is a groupoid.

    We can now prove that the category of octonion algebra objects over $M$ is equivalent to the octonion algebras over $C^\infty(M)$:
\end{remark}

\begin{proposition}[Equivalence]\label{octalgcatobjequivalence}
    The functor
    \begin{align*}
        \Octalgobj_{\Vecbun(M)}&\overset{\Gamma}{\longrightarrow} \Octalg_{C^\infty(M)}\\
        (F, \mathsf{m}, \mathsf{e}, \mathsf{b})&\longmapsto (\Gamma(F), \Gamma(\mathsf{m}), \Gamma(\mathsf{e}), \Gamma(\mathsf{b}))
    \end{align*}
    is an equivalence of categories.
\end{proposition}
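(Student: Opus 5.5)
The plan is to check the three standard ingredients of an equivalence: $\Gamma$ is a well-defined functor, it is essentially surjective, and it is fully faithful.

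\emph{Well-definedness and functoriality.} By Proposition \ref{objalgcorrprop}, for an octonion algebra object $(F,\mathsf{m},\mathsf{e},\mathsf{b})$ the tuple $(\Gamma(F),\Gamma(\mathsf{m}),\Gamma(\mathsf{e}),\Gamma(\mathsf{b}))$ is an octonion algebra over $C^\infty(M)$. Here $\Gamma(\mathsf{m})$ and $\Gamma(\mathsf{b})$ are read through the natural isomorphisms $\Gamma(F\otimes F)\cong\Gamma(F)\otimes\Gamma(F)$ of Theorem \ref{sectionstensor} and $\Gamma(\Lambda^0)=C^\infty(M)$. For a morphism $f:F\to F'$ I take $\Gamma(f)$. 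It is an isomorphism of modules because $\Gamma$ is a functor and $f$ is invertible. I then apply $\Gamma$ to the relations \eqref{octalgobjmorphrels}, written as $\mathsf{m}'=f\circ\mathsf{m}\circ(f^{-1}\otimes f^{-1})$, $\mathsf{e}'=f\circ\mathsf{e}$ and $\mathsf{b}'=\mathsf{b}\circ(f^{-1}\otimes f^{-1})$. Using functoriality, together with naturality of the tensor isomorphism (so that $\Gamma(f^{-1}\otimes f^{-1})$ corresponds to $\Gamma(f)^{-1}\otimes\Gamma(f)^{-1}$), these become exactly the relations \eqref{octalgmorphrels} for $\Gamma(f)$. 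Compositions and identities are preserved since $\Gamma$ is a functor.

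\emph{Essential surjectivity.} Let $(P,m,e,b)$ be an octonion algebra over $C^\infty(M)$. By Theorem \ref{nesth2} there is a rank 8 bundle $F$ with an isomorphism $\theta:\Gamma(F)\cong P$. Pushing the structure forward along $\theta^{-1}$ as in Definition \ref{octalgpushf} gives an octonion algebra structure on $\Gamma(F)$ that is isomorphic to $P$ in $\Octalg_{C^\infty(M)}$. By Theorem \ref{nesth1}, combined with Theorems \ref{sectionstensor} and \ref{sectionshoms}, the structure maps $\theta^{-1}_*m$, $\theta^{-1}_*e$ and $\theta^{-1}_*b$ come from unique bundle maps $\mathsf{m},\mathsf{e},\mathsf{b}$. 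The converse direction of Proposition \ref{objalgcorrprop}, which uses Lemma \ref{localdegen=sectiondegen} for non-degeneracy, shows that $(F,\mathsf{m},\mathsf{e},\mathsf{b})$ is an octonion algebra object. Its image under $\Gamma$ is the pushed-forward algebra, which is isomorphic to $P$.

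\emph{Full faithfulness.} Faithfulness is immediate from Theorem \ref{nesth1}: distinct bundle maps have distinct images. For fullness, let $g:\Gamma(F)\to\Gamma(F')$ be a morphism of octonion algebras. Theorem \ref{nesth1} gives a unique bundle map $f$ with $\Gamma(f)=g$. Applying the same theorem to $g^{-1}$ and using uniqueness shows that $f$ is an isomorphism. Finally, $\Gamma(f\circ\mathsf{m}\circ(f^{-1}\otimes f^{-1}))=\Gamma(\mathsf{m}')$ and the analogous identities for $\mathsf{e}$ and $\mathsf{b}$ hold because $g$ satisfies \eqref{octalgmorphrels}. Injectivity of $\Gamma$ on Hom-sets, applied to $\Hom(F\otimes F,F')$, $\Hom(\Lambda^0,F')$ and $\Hom(F'\otimes F',\Lambda^0)$, then gives \eqref{octalgobjmorphrels}.

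The main obstacle is bookkeeping rather than substance. Every comparison between $\Gamma$ applied to a map out of a tensor product of bundles and the corresponding map out of a tensor product of modules must go through the natural isomorphism of Theorem \ref{sectionstensor}. Naturality in both variables is what lets $\Gamma(f^{-1}\otimes f^{-1})$ be replaced by $\Gamma(f)^{-1}\otimes\Gamma(f)^{-1}$. I would make this explicit once, by invoking the monoidality remark, and then use it in both the functoriality step and the fullness step.
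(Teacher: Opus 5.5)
Your proposal is correct and takes essentially the same route as the paper. The paper combines the equivalence $\Gamma:\Vecbun(M)\to\proj_{C^\infty(M)}$, the object correspondence of Proposition \ref{objalgcorrprop}, and the observation that the pushforward conditions of Definition \ref{octalgobjcatdef} hold for $f$ exactly when those of Definition \ref{octalgcatdef} hold for $\Gamma(f)$. Your write-up is more explicit, and your transport of structure along $\theta$ correctly handles the fact that $\Gamma$ is only essentially surjective on objects, which the paper's phrase ``1:1-correspondence'' glosses over.
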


\begin{proof}
    This is due to the fact that $\Gamma:\Vecbun(M)\to \proj_{C^\infty(M)}$ is an equivalence of categories and that it provides a 1:1-correspondence between the objects, as stated in \ref{objalgcorrprop}. In particular, if $F, F'$ are octonion algebra objects, an isomorphism $f:F\to F'$ of vector bundles is a morphism of octonion algebra objects if and only if it satisfies the pushforward axiom of \ref{octalgobjmorphrels}, which occurs if and only if $\Gamma(F)$ satisfies the pushforward axiom of \ref{octalgmorphrels} which is equivalent to $\Gamma(F)\to\Gamma(F')$ being a morphism of octonion algebras.
\end{proof}

\begin{remark}
    The equivalence is given even more directly by defining octonion algebra objects over an arbitrary category $\CC$, as is done with for instance algebra objects and Hopf algebras. We then need to require the category to be tensor category. This is needed to, among other things, say something about the non-degeneracy of the bilinear form. With morphisms of octonion algebra objects defined as isomorphisms of underlying objects that push forward the multiplication, identity and bilinear form (as in \ref{octalgmorphrels} or \ref{octalgobjmorphrels}) it is immediate that categories of octonion algebra objects are preserved under equivalence of respective underlying categories.

    The fact that the octonion algebra objects of $\proj_R$ are the octonion algebras over $R$ is the content of the last remark in the proof of proposition \ref{objalgcorrprop}.
\end{remark}

\begin{remark}\label{elemoctalgrmk}
    It is natural to ask if the additional structure that an octonion algebra object in $\Vecbun(M)$ has an influence on the underlying bundle. For instance, the admission of an identity $\mathsf{e}:\Lambda^0\to F$ for a rank 8 bundle $F$ gives us a distinguished subbundle of $F$ that is isomorphic to $\Lambda^0$. By this fact, there is a splitting $F\cong \Lambda^0\oplus F'$ with $F'$ a rank 7 bundle. To further understand the global structure, and the properties of the associated octonion algebra, we first inspect the structure of the fibres, i.e., over any given point $p\in M$.
\end{remark}

\begin{lemma}\label{octalgobjfibre}
    Let $M$ be a smooth manifold and let $(F, \mathsf{m}, \mathsf{e}, \mathsf{b})$ be an octonion algebra object of $\Vecbun(M)$. Then, for every point $p\in M$, the data at the fibre, i.e., the quadruple $(F_p, \mathsf{m}_p, \mathsf{e}_p, \mathsf{b}_p)$, define an octonion algebra over $\BR$ with underlying vector space (or module) $F_p$.
\end{lemma}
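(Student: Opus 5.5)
The plan is to restrict each piece of structure in Definition \ref{octalgobjdef} to the fibre over $p$ and check the axioms of an octonion algebra over $\BR$ directly. There are two ways to obtain the fibre data: evaluate the bundle morphisms at $p$, or localise the octonion algebra $\Gamma(F)$ over $C^\infty(M)$ from Proposition \ref{objalgcorrprop} at the residue field $\kappa(\mathfrak{m}_p)\cong\BR$ and use Proposition \ref{Fibrelocalprop}. I will take the first, more concrete route and mention the second only as a consistency check. Morphisms of vector bundles restrict fibrewise to linear maps, and $(F\otimes F)_p=F_p\otimes F_p$ and $\Lambda^0_p=\BR$. So we obtain an $\BR$-bilinear multiplication $\mathsf{m}_p:F_p\otimes F_p\to F_p$, an element $e_p:=\mathsf{e}_p(1)\in F_p$, and a bilinear form $\mathsf{b}_p:F_p\otimes F_p\to\BR$. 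The underlying space $F_p$ has dimension 8, since $F$ has rank 8, so it is a finitely generated projective $\BR$-module of rank 8.

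Next I will restrict each commutative diagram to the fibre over $p$. Commutativity of a diagram of bundle morphisms holds pointwise, and the fibre at $p$ of a tensor product, sum or composite of morphisms is the corresponding tensor product, sum or composite of the fibre maps. The twist maps $\mathsf{T}_{1324}$ and $\mathsf{T}_{1423}$ restrict to the usual permutations of tensor factors of $F_p^{\otimes 4}$. This gives three facts:
\begin{enumerate}[label=(\alph*)]
\item By the unit diagram, $e_p$ is a two-sided identity for $\mathsf{m}_p$.
\item By the twist diagram, $\mathsf{b}_p$ is symmetric.
\item By the $\lambda$-diagram, for all $x_1,x_2,y_1,y_2\in F_p$,
\[
\mathsf{b}_p(x_1,x_2)\,\mathsf{b}_p(y_1,y_2)=\mathsf{b}_p(x_1y_1,x_2y_2)+\mathsf{b}_p(x_1y_2,x_2y_1).
\]
\end{enumerate}
Non-degeneracy of $\mathsf{b}_p$ is imposed directly in Definition \ref{octalgobjdef}. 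Alternatively, it follows from Lemma \ref{localdegen=sectiondegen} applied to $\Gamma(\mathsf{b})$.

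Define $q_p(x):=\tfrac12\mathsf{b}_p(x,x)$. Then $q_p$ is a quadratic form whose polarisation $q_p(x+y)-q_p(x)-q_p(y)$ equals $\mathsf{b}_p(x,y)$. By Lemma \ref{octalglemma} with $R=\BR$, where $\tfrac12\in\BR$, identity (c) is equivalent to $q_p(xy)=q_p(x)q_p(y)$. Non-degeneracy of $\mathsf{b}_p$ says that $x\mapsto \mathsf{b}_p(x,-)$ is an isomorphism $F_p\to F_p^\vee$, which is condition (i) of Definition \ref{octalgdef}. Hence $(F_p,\mathsf{m}_p,e_p,q_p)$ is an octonion algebra over $\BR$.

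The one point needing care is the uniqueness clause for the norm in Definition \ref{octalgdef}. Over a field this is classical: the norm of a composition algebra is determined by the algebra structure (see \cite{SV}), so I will simply cite that. The other slightly delicate point is that the fibre of the sum $\lambda_{(\mathsf{m},\mathsf{b})}$ is the sum of the fibres. This holds because addition of bundle morphisms is defined pointwise. I expect no real obstacle beyond bookkeeping these identifications.
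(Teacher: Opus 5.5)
Your proposal is correct and takes the same route as the paper: you restrict the bundle morphisms and the three commuting diagrams of Definition \ref{octalgobjdef} to the fibre over $p$, use $\Lambda^0_p=\BR$ and the pointwise non-degeneracy of $\mathsf{b}_p$, and mention evaluation of global sections at $p$ as the alternative viewpoint. Beyond the paper's brief argument, you also spell out two steps it leaves implicit: the passage from the diagram identity to $q_p(xy)=q_p(x)q_p(y)$ via Lemma \ref{octalglemma} with $q_p(x)=\tfrac12\mathsf{b}_p(x,x)$, and the uniqueness of the norm, which is automatic over a field.
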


\begin{proof}
    By definition \ref{octalgobjdef} the restriction $F_p$ is a vector space of dimension 8 over $\BR$ and the maps
    \begin{align*}
        \mathsf{m}_p:F_p\otimes F_p\to F_p, \hspace{6mm} \mathsf{b}_p:F_p\otimes F_p\to \Lambda^0_p, \hspace{6mm} \mathsf{e}_p:\Lambda^0_p\to F_p,
    \end{align*}
    where $\mathsf{b}_p:F_p\otimes F_p\to \Lambda^0_p$ is non-degenerate, make the following diagrams commute:
    \begin{center}
        \begin{tikzcd}
        \Lambda^0_p\otimes F_p \arrow[r, "\mathsf{e}_p\otimes\mathrm{Id}"] \arrow[rd, "\cong"'] & F_p\otimes F_p \arrow[d, "\mathsf{m}_p"] & F_p\otimes \Lambda^0_p \arrow[l, "\mathrm{Id}\otimes \mathsf{e}_p"'] \arrow[ld, "\cong"] \\
                                                                                   & F_p                         &                                                                            
        \end{tikzcd}
        \hspace{8mm}
        \begin{tikzcd}
        F_p\otimes F_p \arrow[r, "\mathsf{b}_p"] \arrow[d, "\mathsf{T}"'] & \Lambda^0_p \\
        F_p\otimes F_p \arrow[ru, "\mathsf{b}_p"']               &     
        \end{tikzcd}
        \end{center}
        \begin{center}
        \begin{tikzcd}
        F_p\otimes F_p\otimes F_p\otimes F_p \arrow[d, "\mathsf{b}_p\otimes \mathsf{b}_p"'] \arrow[r, "{(\lambda_{(\mathsf{m}, \mathsf{b})})_p}"] & \Lambda^0_p \\
        \Lambda^0_p\otimes\Lambda^0_p \arrow[ru, "\cong"']                                                                            &          
        \end{tikzcd}
    \end{center}
    Since $\Lambda^0$ is the trivial bundle we have $\Lambda^0_p=\BR$ which gives us that the above defines a real octonion algebra. Alternatively, passing from $(F, \mathsf{m}, \mathsf{e}, \mathsf{b})$ to its associated octonion algebra via global sections, via proposition \ref{objalgcorrprop}, it suffices to evaluate the given sections at the point $p\in M$.
\end{proof}

\begin{remark}\label{questionremark}
     This local structure provides insight into the restrictions of the global structure, and will be the subject of lemma \ref{relationslemma}. Before this, we can again ask the question of whether an octonion algebra object $F$ has further restrictions on its underlying bundle. With respect to the splitting $F=\Lambda^0\oplus F'$ given by the identity there exist different options for the remaining rank 7 part. We present two examples, one of which will be our main focus later.
\end{remark}

\begin{example}
    Let $M$ be any smooth manifold. Then any octonion algebra $(C, m, e, b)$ over $\BR$ can be extended to an octonion algebra object $((\Lambda^0)^{\oplus8}, \mathsf{m}, \mathsf{e}, \mathsf{b})$ where, for any $p\in M$,
    \begin{align*}
        (\Lambda^0)^{\oplus8}_p=C, \hspace{6mm} \mathsf{m}_p:=m, \hspace{6mm} \mathsf{e}_p:=e, \hspace{6mm} \mathsf{b}_p:=b,
    \end{align*}
    given an isomorphism $\Lambda^0_p\to\BR$. In other words, this extension to the trivial rank 8 bundle is given by assigning to every fibre the same real octonion algebra, namely $(C, b, m, e)$.
\end{example}

\begin{example}
    Let $M$ be a smooth 7-dimensional manifold. We will see that an octonion algebra object with underlying bundle equal to $\Lambda^0\oplus TM$ exists under certain conditions (theorems \ref{g2obstr} and \ref{maintheorem}). Such octonion algebra objects will be the main interest in the study of octonion bundles. Note that this applies to the previous example whenever $M$ is parallelisable, i.e., whenever $TM\cong(\Lambda^0)^{\oplus 7}$.
\end{example}

\begin{remark}
    We continue the local treatment of octonion algebra objects by establishing relations important for next subsection and section \ref{twistsection}.
\end{remark}

\begin{proposition}[Relations]\label{relationslemma}
    Let $(P, m, e, b)$ be an octonion algebra over $C^\infty(M)$. Then any elements $x, y, z\in P$ and $x_i, y_i\in P$, $i=1, 2$, satisfy
    \begin{align}\label{masterformula}
        xy+yx-b(y\otimes 1)x-b(x\otimes 1)y+b(x\otimes y)=0,
    \end{align}
    \begin{align}\label{3formrels}
        \begin{cases}
            b(xy\otimes z)=b(y\otimes \Bar{x}z)\\
            b(xy\otimes z)=b(x\otimes z\Bar{y})\\
            b(xy\otimes \Bar{z})=b(yz\otimes \Bar{x}),
        \end{cases}
    \end{align}
    and
    \begin{align}\label{barmultrel}
        b(x, x)=2x\Bar{x}
    \end{align}
    where the involution is defined as
    \begin{align*}
        \Bar{x}:=b(x\otimes 1)-x.
    \end{align*}
\end{proposition}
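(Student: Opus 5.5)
The plan is to reduce all three identities to the fibres and then use the classical theory of real octonion algebras. By Proposition \ref{octalgcatobjequivalence} and Proposition \ref{objalgcorrprop}, the algebra $(P,m,e,b)$ is, up to isomorphism, $\Gamma$ of an octonion algebra object $(F,\mathsf{m},\mathsf{e},\mathsf{b})$. So I may take $P=\Gamma(F)$ with $m=\Gamma(\mathsf{m})$, $e=\Gamma(\mathsf{e})$ and $b=\Gamma(\mathsf{b})$. Unwinding the isomorphism of Theorem \ref{sectionstensor}, as in equation \eqref{bdef} in the proof of Lemma \ref{localdegen=sectiondegen}, gives the pointwise formulas
\begin{align*}
(xy)(p)=\mathsf{m}_p(x(p)\otimes y(p)),\qquad b(x\otimes y)(p)=\mathsf{b}_p(x(p)\otimes y(p)),\qquad 1(p)=\mathsf{e}_p(1).
\end{align*}
Hence $\bar{x}(p)=\overline{x(p)}$, where the bar on the right is computed in the fibre. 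Since two sections of $F$ (or of $\Lambda^0$) are equal if and only if they agree at every $p\in M$, each of \eqref{masterformula}, \eqref{3formrels} and \eqref{barmultrel} holds in $P$ if and only if it holds in $(F_p,\mathsf{m}_p,\mathsf{e}_p,\mathsf{b}_p)$ for all $p$. By Lemma \ref{octalgobjfibre}, each fibre is a real octonion algebra. This reduces the proposition to the case of a real octonion algebra $C$ with bilinear form $b=b_q$ and norm $q(x)=\tfrac12 b(x,x)$.

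Over a field I will use the standard facts from \cite{SV}, Chapter 1. The first is the quadratic equation
\begin{align*}
x^2-b(x,1)x+q(x)1=0.
\end{align*}
Polarising it, that is, replacing $x$ by $x+y$ and subtracting the equations for $x$ and $y$, gives exactly \eqref{masterformula}. Here $b(x,y)$ is read as $b(x,y)1$, using $b(x+y,x+y)-b(x,x)-b(y,y)=2b(x,y)$.

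Setting $y=x$ in \eqref{masterformula} gives $2x^2-2b(x,1)x+b(x,x)=0$. Combined with $x\bar{x}=b(x,1)x-x^2$, this yields $2x\bar{x}=b(x,x)$, which is \eqref{barmultrel}.

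For \eqref{3formrels} I cite the adjoint identities $b(xy,z)=b(y,\bar{x}z)=b(x,z\bar{y})$ from \cite{SV}, Lemma 1.3.2. Alternatively, they follow from the linearised composition law \eqref{bilformmultcomm} by taking $y_1=1$ and using \eqref{masterformula} to rewrite $b(x_1,x_2y_2)$-type terms. For the third relation I will chain the second identity with two properties of the involution: it is an anti-automorphism, $\overline{yz}=\bar{z}\,\bar{y}$, and it is an isometry of $b$. This gives
\begin{align*}
b(xy,\bar{z})=b(x,\bar{z}\,\bar{y})=b(x,\overline{yz})=b(\bar{x},yz).
\end{align*}
By symmetry of $b$ (the top right diagram of Definition \ref{octalgobjdef}), this equals $b(yz,\bar{x})$.

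The main obstacle is bookkeeping rather than any deep step. The paper's conventions differ from \cite{SV}: here $b$ is the polar form $b_q$, so $b(x,x)=2q(x)$, and the ``trace'' is $b(x,1)$. I expect the most care to be needed in tracking the factors of $2$ and in verifying that $\bar{x}=b(x,1)1-x$ agrees with the Springer–Veldkamp conjugation. The other point to make rigorous is the claim that $\Gamma$ of the structure maps is computed pointwise. For that I would rely on the explicit description in Theorem \ref{sectionstensor} and on the fact that bump functions give enough sections to separate points of each fibre, as in Proposition \ref{Fibrelocalprop}.
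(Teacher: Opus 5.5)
Your proposal is correct and takes the same route as the paper. Like the paper, you realise $P$ as the sections of an octonion algebra object, note that each fibre $(F_p,\mathsf{m}_p,\mathsf{e}_p,\mathsf{b}_p)$ is a real octonion algebra by Lemma \ref{octalgobjfibre}, and transfer the classical identities of \cite{SV} pointwise to sections. Your explicit polarisation of the quadratic equation for \eqref{masterformula}, the specialisation $y=x$ giving \eqref{barmultrel}, and the chain $b(xy,\bar z)=b(x,\overline{yz})=b(\bar x,yz)$ for the third relation in \eqref{3formrels} all check out; they fill in details the paper leaves to the citations.
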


\begin{proof}
    First, let $(F, \mathsf{m}, \mathsf{e}, \mathsf{b})$ denote the octonion algebra object of $\Vecbun(M)$ associated to $(P, m, e, b)$. Let $p\in M$ be any point and let $(F_p, \mathsf{m}_p, \mathsf{e}_p, \mathsf{b}_p)$ denote the real octonion algebra given by the octonion algebra object $F$ at $p$, utilising lemma \ref{octalgobjfibre}. On this real octonion algebra, the relations of \cite[Remark 1.2.2]{SV}, \cite[Proposition 1.2.3]{SV} and \cite[Lemma 1.3.2]{SV} hold. In effect, the relations are satisfied on the fibre $F_p$ for any point $p\in M$, hence globally;

    In particular, any identity that holds for elements in a real octonion algebra, expressed using elements of the octonion algebra and the multiplication and bilinear form, hold for any sections of an octonion algebra object. This is since sections are globally defined and evaluate over any point $p\in M$ to elements of a real octonion algebra. This concludes the proof.
\end{proof}

\begin{remark}
    Note that, as is stated in the last paragraph of the proof, the above procedure may be applied to any identities satisfied by a real octonion algebra to get analogous results for the sections of an octonion algebra object over $\Vecbun(M)$.

    The above relations also hold in any octonion algebra over any ring; The first identity, for instance, is given by linearising relation (4-3) of \cite{LPR}. The remaining identities \ref{3formrels} and \ref{barmultrel} almost immediately by definition of the involution.
\end{remark}

\subsection{Octonion bundles: Intrinsic definition and first consequences}\label{octbunintrinsic}

Octonion bundles were first defined in \cite{Gri}. This definition uses a predetermined $G_2$-structure $\varphi$ and, from that, constructs a metric and multiplication on the bundle $\Lambda^0\oplus TM$. We aim to do this without reference to a predetermined $G_2$-structure. We instead define octonion bundles by realising them as octonion algebra objects in the category $\Vecbun(M)$. In particular, we define octonion bundles as a certain class of octonion algebra objects. To do this we first present the following definition. Throughout this section we assume that $M$ is connected.

\begin{definition}[Globally split and globally non-split octonion algebra objects]\label{splitoctalgobj}
    Let $M$ be a smooth manifold and let $(F, \mathsf{m}, \mathsf{e}, \mathsf{b})$ be an octonion algebra object in $\Vecbun(M)$. We say that this object is \emph{globally split} if, for every point $p\in M$, the real octonion algebra $(F_p, \mathsf{m}_p, \mathsf{e}_p, \mathsf{b}_p)$ is split. In the same way we say that the octonion algebra object is \emph{globally non-split} if, for every point $p\in M$, the real octonion algebra $(F_p, \mathsf{m}_p, \mathsf{e}_p, \mathsf{b}_p)$ is non-split.
\end{definition}

To show that there exist globally split respectively non-split octonion algebra objects, and to see how these behave, we first need the following lemma and proposition.

\begin{lemma}\label{sylvesterlemma}
    Let $V$ be a finite-dimensional $\BR$-vector space. Suppose that $\{B_\lambda : \gamma\in(\alpha, \beta)\}$ is a family of non-degenerate symmetric bilinear forms on $V$, varying continuously along $(a, b)\subset\BR$. Then all $B_\lambda$ have the same signature.
\end{lemma}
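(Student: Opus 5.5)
The plan is to show that the signature, regarded as a function of the parameter, is locally constant, and then to use connectedness of the interval. (The statement mixes its notation; I read it as a family $B_\lambda$, $\lambda\in(a,b)$, depending continuously on $\lambda$.) Fix a basis of $V$, so that each $B_\lambda$ is a symmetric matrix whose entries depend continuously on $\lambda$. Non-degeneracy then says $\det B_\lambda\neq 0$ for every $\lambda$. By Sylvester's law of inertia the signature $(n_+(\lambda), n_-(\lambda))$ is independent of the chosen basis, and $n_+(\lambda)+n_-(\lambda)=\dim V$. It therefore suffices to show that $\lambda\mapsto n_+(\lambda)$ is locally constant.

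For local constancy, fix $\lambda_0$. Choose a subspace $W_+\subseteq V$ of dimension $n_+(\lambda_0)$ on which $B_{\lambda_0}$ is positive definite, and a subspace $W_-$ of dimension $n_-(\lambda_0)$ on which it is negative definite.

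The unit sphere of $W_+$ (with respect to any fixed auxiliary norm) is compact, and the map $(\lambda,v)\mapsto B_\lambda(v,v)$ is jointly continuous. Hence $B_\lambda$ stays positive definite on $W_+$ for all $\lambda$ in some neighbourhood of $\lambda_0$. The same argument shows that $B_\lambda$ stays negative definite on $W_-$ near $\lambda_0$.

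Since $n_+(\lambda)$ is the maximal dimension of a positive definite subspace, and $n_-(\lambda)$ likewise for negative definite subspaces, we obtain $n_+(\lambda)\geq n_+(\lambda_0)$ and $n_-(\lambda)\geq n_-(\lambda_0)$ near $\lambda_0$. Non-degeneracy of $B_\lambda$ gives $n_+(\lambda)+n_-(\lambda)=\dim V=n_+(\lambda_0)+n_-(\lambda_0)$, so both inequalities are in fact equalities.

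The step where the hypotheses genuinely enter is this last equality. Without non-degeneracy an eigenvalue could pass through zero and the signature could jump, so the argument must use the identity $n_++n_-=\dim V$ at every $\lambda$, not only at $\lambda_0$. An alternative route is to order the eigenvalues of $B_\lambda$, note that they depend continuously on $\lambda$, and observe that none of them can change sign because $\det B_\lambda\neq 0$. I prefer the subspace argument above because it avoids invoking continuity of eigenvalues.

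Finally, a locally constant integer-valued function on the connected interval $(a,b)$ is constant, so all the $B_\lambda$ have the same signature.
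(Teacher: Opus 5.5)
Your argument is correct, and it is the natural way to make precise the paper's one-line proof, which only says the lemma is a corollary of Sylvester's law of inertia. You use Sylvester's law for the well-definedness of $(n_+,n_-)$ and for the characterisation of $n_\pm$ by maximal definite subspaces, and then supply the openness-of-definiteness and connectedness steps that the paper leaves implicit.

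One small correction to your commentary: for local constancy at a fixed $\lambda_0$ you only need non-degeneracy at $\lambda_0$. This is because $n_+(\lambda)+n_-(\lambda)\le\dim V$ holds trivially. Non-degeneracy at every $\lambda$ is needed only so that every point can serve as a centre $\lambda_0$.
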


\begin{proof}
    This is a corollary of Sylvester's law of inertia.
\end{proof}

\begin{proposition}\label{fibresplit}
    Let $(F, \mathsf{m}, \mathsf{e}, \mathsf{b})$ be an octonion algebra object in $\Vecbun(M)$. If $(F_p, \mathsf{m}_p \mathsf{e}_p, \mathsf{b}_p)$ is a non-split (respectively split) real octonion algebra for some point $p\in M$ then $(F, \mathsf{m}, \mathsf{e}, \mathsf{b})$ is a globally non-split (respectively globally split) octonion algebra object in $\Vecbun(M)$, in the sense of definition \ref{splitoctalgobj}.
\end{proposition}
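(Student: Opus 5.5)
The plan is to convert the algebraic dichotomy split/non-split into a statement about the signature of the bilinear form $\mathsf{b}_p$, and then use continuity of $\mathsf{b}$ together with connectedness of $M$ (assumed throughout this subsection) and Lemma \ref{sylvesterlemma}.

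First I record the classical fact from \cite{SV}: a real octonion algebra is non-split if and only if its norm is anisotropic, i.e.\ definite. Since $q(e)=1$, this means positive definite, of signature $(8,0)$. It is split if and only if the norm is isotropic. By Remark \ref{Pfisterformintro} the norm is then a Pfister form, and over $\BR$ the only isotropic threefold Pfister form is hyperbolic, of signature $(4,4)$. So by Lemma \ref{octalgobjfibre} each fibre $(F_p,\mathsf{m}_p,\mathsf{e}_p,\mathsf{b}_p)$ is a real octonion algebra, and whether it is split is determined by the signature of $\mathsf{b}_p$. That signature is either $(8,0)$ or $(4,4)$.

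Next I show that the signature function $p\mapsto \mathrm{sgn}(\mathsf{b}_p)$ is locally constant on $M$. Take a trivialising chart $U\ni p$ with $F|_U\cong U\times\BR^8$, shrunk to be connected, for instance a coordinate ball. On $U$, $\mathsf{b}$ becomes a smooth family of symmetric matrices $B(x)$, and each $B(x)$ is non-degenerate by Definition \ref{octalgobjdef}. For any $x\in U$, join $p$ to $x$ by a path $\gamma:[0,1]\to U$. Then $B(\gamma(t))$ is a continuously varying family of non-degenerate symmetric forms on the fixed space $\BR^8$, so Lemma \ref{sylvesterlemma} gives that the signature is constant along $\gamma$.

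The hardest step is justifying Lemma \ref{sylvesterlemma} itself, should one want more than a citation. I would argue it through eigenvalues. These depend continuously on $t$ and can never cross $0$ because $\det B(\gamma(t))\neq 0$, so the number of positive eigenvalues is constant.

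Finally, the sets $\{p:\mathrm{sgn}(\mathsf{b}_p)=(8,0)\}$ and $\{p:\mathrm{sgn}(\mathsf{b}_p)=(4,4)\}$ are both open, disjoint, and cover $M$. Since $M$ is connected, one of them is all of $M$. Hence if one fibre is non-split (respectively split), every fibre is, which is the statement of the proposition.
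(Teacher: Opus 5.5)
Your proof is correct and follows the same idea as the paper: identify non-split with $\mathsf{b}_p$ being positive definite, use Lemma \ref{sylvesterlemma} to see that the signature cannot change continuously, and conclude by connectedness of $M$. The only difference is that you apply the lemma inside connected trivialising charts and then run a clopen argument, which supplies the fixed vector space $\BR^8$ the lemma requires; the paper instead applies it directly along a path from $p$ to $q$, where the forms $\mathsf{b}_{t(\lambda)}$ live on varying fibres, so it implicitly relies on trivialising $F$ along that path.
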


\begin{proof}
    Assume that $(F, \mathsf{m}, \mathsf{e}, \mathsf{b})$ is an octonion algebra object in $\Vecbun(M)$ and let $p\in M$ be a point such that $(F_p, \mathsf{m}_p, \mathsf{e}_p, \mathsf{b}_p)$ is a real non-split (respectively split) octonion algebra. Let $q\in M$ be any other point and denote by $t:[0, 1]\to M$ a continuous path from $t(0)=p$ to $t(1)=q$ (recall that $M$ is connected). This lifts to a family $\{\mathsf{b}_{t(\lambda)}: \; \lambda\in[0, 1]\}$ that varies continuously along $[0, 1]$. By lemma \ref{sylvesterlemma}, the signature remains invariant along this path. Due to this, $\mathsf{b}_q$ is the bilinear form of a real octonion algebra and has the same signature as $\mathsf{b}_p$, so it must be positive-definite (respectively indefinite), i.e., the bilinear form of a real non-split (respectively split) octonion algebra.
\end{proof}

\begin{corollary}\label{splitness}
    Every octonion algebra object over $M$ is either globally non-split or globally split.
\end{corollary}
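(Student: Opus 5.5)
The corollary should follow from the classification of real octonion algebras together with Proposition \ref{fibresplit}. I would argue by a simple dichotomy on a single fibre, after first checking that $M$ is nonempty.

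First step: pick any point $p\in M$. By Lemma \ref{octalgobjfibre}, the fibre $(F_p,\mathsf{m}_p,\mathsf{e}_p,\mathsf{b}_p)$ is a real octonion algebra. By Remark \ref{uniqueoctalg/field}, over $\BR$ there are exactly two isomorphism classes, split and non-split. So this fibre is either split or non-split, and these two cases are mutually exclusive.

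Second step: apply Proposition \ref{fibresplit} in each case.
\begin{itemize}
\item If $F_p$ is non-split, then $F$ is globally non-split.
\item If $F_p$ is split, then $F$ is globally split.
\end{itemize}

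Third step: the two global conditions cannot hold at once. Each one is a statement about every fibre, and no real octonion algebra is both split and non-split. This exclusivity holds whenever $M$ is nonempty, which is the implicit convention; if $M$ were empty, both conditions would hold vacuously, so the statement should be read with $M$ nonempty.

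The only point needing care is the appeal to Proposition \ref{fibresplit}, which in turn relies on connectedness of $M$ and on the signature argument of Lemma \ref{sylvesterlemma}. That proposition also uses the fact that real octonion algebras are distinguished by their norm: the norm is positive definite exactly in the non-split case and has signature $(4,4)$ in the split case. This is the classical statement from \cite{SV} that a composition algebra over a field is determined by its quadratic form. Taking the proposition as given, no further obstacle remains.
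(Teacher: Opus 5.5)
Your proof is correct and matches the paper, which states this corollary directly after Proposition \ref{fibresplit} without a separate proof: any single fibre is a real octonion algebra, hence split or non-split, and that proposition spreads the type to every fibre via connectedness of $M$. Your extra remark that exclusivity needs $M$ nonempty is a harmless refinement; the paper's standing assumption that $M$ is connected already excludes the empty case.
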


\begin{remark}
    Due to the equivalence \ref{octalgcatobjequivalence} we can say that an octonion algebra $C\in\catname{octalg}_{C^\infty(M)}$ is globally non-split (respectively split) if the corresponding octonion algebra object $\Gamma^{-1}(C)$ is globally non-split (respectively split). Furthermore, since it is a condition dependent on fibres we can define an equivalent notion of being globally split (respectively globally non-split) for octonion algebras. To do this, we return to the algebraic setting over an arbitrary ring $R$.
\end{remark}

\begin{definition}[Local splitness of octonion algebra]
    Let $(C, m, e, b)$ be an octonion algebra over a ring $R$. Let $\mathfrak{p}$ be a prime ideal of $R$ containing $\frac{1}{2}$. We say that $C$ is \emph{split} (respectively \emph{non-split}) \emph{at $\mathfrak{p}$} if the octonion algebra
    \begin{align*}
        (C\otimes_R\kappa(\mathfrak{p}),\\
        m\otimes\Id_{\kappa(\mathfrak{p})}),\\
        e\otimes\Id_{\kappa(\mathfrak{p})}),\\
        b\otimes\Id_{\kappa(\mathfrak{p})})
    \end{align*}
    over the residue field $\kappa(\mathfrak{p})=R_\mathfrak{p}/\mathfrak{p}R_\mathfrak{p}$ is split (respectively non-split).
\end{definition}

\begin{remark}
    The fact that this defines an octonion algebra over $\kappa(\mathfrak{p})$ follows from the fact that octonion algebras are preserved under change of ring. We can then define $(C, m, e, b)$ as \emph{globally split} (respectively \emph{globally non-split}) if it is split (respectively non-split) at every maximal ideal $\mathfrak{m}$ of $R$.
    
    Due to propositions \ref{evalCprop} and \ref{Fibrelocalprop} this means that we can characterise proposition \ref{splitoctalgobj} algebraically:
\end{remark}

\begin{corollary}
    Any octonion algebra $(C, m, e, b)$ over the ring $C^\infty(M)$ is either globally split or globally non-split.
\end{corollary}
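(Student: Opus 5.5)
The plan is to reduce the statement to Corollary \ref{splitness} by way of the equivalence of Proposition \ref{octalgcatobjequivalence}, and to identify the algebraic fibre at a maximal ideal with the geometric fibre at a point.

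Let $(C, m, e, b)$ be an octonion algebra over $C^\infty(M)$. By Proposition \ref{octalgcatobjequivalence} there is an octonion algebra object $(F, \mathsf{m}, \mathsf{e}, \mathsf{b})$ with $\Gamma(F)\cong C$, compatibly with the structure maps. Corollary \ref{splitness} tells us that $F$ is either globally split or globally non-split, meaning every real fibre algebra $(F_p, \mathsf{m}_p, \mathsf{e}_p, \mathsf{b}_p)$ has the same type. We now translate this into the algebraic definition.

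For each point $p\in M$, Proposition \ref{evalCprop} identifies $\kappa(\mathfrak{m}_p)$ with $\BR$. Proposition \ref{Fibrelocalprop} then gives an $\BR$-isomorphism $C\otimes\kappa(\mathfrak{m}_p)\cong F_p$, defined by $s\otimes[f/g]\mapsto s(p)f(p)/g(p)$. Because this map is given by evaluation, it carries $m\otimes\Id$, $e\otimes\Id$ and $b\otimes\Id$ to $\mathsf{m}_p$, $\mathsf{e}_p$ and $\mathsf{b}_p$. Checking this on pure tensors is routine: for example, $(s t)(p)=\mathsf{m}_p(s(p)\otimes t(p))$, using the identification of Theorem \ref{sectionstensor} as in the proof of Lemma \ref{localdegen=sectiondegen}. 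It follows that $C$ is split (respectively non-split) at $\mathfrak{m}_p$ exactly when $F_p$ is split (respectively non-split).

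The main obstacle is that the algebraic definition quantifies over \emph{all} maximal ideals of $C^\infty(M)$. When $M$ is non-compact, there are maximal ideals that are not of the form $\mathfrak{m}_p$, for instance those containing all compactly supported functions. I see two ways to handle this.

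The first is to read the definition of globally split as ranging over the ideals $\mathfrak{m}_p$, which is how the remark preceding the statement phrases the characterisation via Propositions \ref{evalCprop} and \ref{Fibrelocalprop}. In that case the argument above already completes the proof.

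The second, if all maximal ideals must be treated, is to argue via the signature of $b$ directly. Octonion algebras over a field $k$ are determined by their norm, and over any residue field the algebra is split precisely when its norm is isotropic. Isotropy can be witnessed globally. In the globally split case, one would choose a section $x$ of $F$ with $\mathsf{b}_p(x,x)=0$ and $x(p)\neq 0$ for all $p$; the plan is to construct this by using the signature $(4,4)$ and splitting $F$ into positive and negative definite rank-4 subbundles, each admitting a unit section after suitable modification. The image of such an $x$ in any residue field would then be a nonzero isotropic vector. In the non-split case, $b$ is positive definite, so $b(x,x)$ is a sum-of-squares type positive function, and one would aim to show that its class cannot vanish on a nonzero vector in any residue field. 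This second route is the delicate one, and I would first try to avoid it by adopting the $\mathfrak{m}_p$ reading, noting the subtlety explicitly.
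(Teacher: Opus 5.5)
Your main argument is the paper's own. The paper justifies this corollary only through the preceding remark: it identifies $C\otimes\kappa(\mathfrak{m}_p)$ with the fibre $F_p$ via Propositions \ref{evalCprop} and \ref{Fibrelocalprop}, then applies Corollary \ref{splitness}, and it never considers maximal ideals other than the $\mathfrak{m}_p$. So under your first reading your proof is complete and agrees with the paper's. For compact $M$ it also proves the literal statement, since then every maximal ideal is some $\mathfrak{m}_p$. Your observation about the other maximal ideals of $C^\infty(M)$ for non-compact $M$ is correct, and the paper's argument has the same gap.

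Your second route would not close that gap as sketched, because in the split case a nowhere-vanishing isotropic section need not exist. If $x$ were one, its component in a maximal negative definite rank-4 subbundle $F_-$ would be nowhere zero; all choices of $F_-$ are isomorphic, and $F_-$ can have non-zero Euler class. Concretely, for unit quaternions $p,q$ the maps $a\mapsto pa\bar p$, $b\ell\mapsto (qb\bar p)\ell$ give automorphisms of the split octonions $\mathbb{H}\oplus\mathbb{H}\ell$. So the maximal compact $\SO(4)\subset G_2^*$ acts on the negative definite half by its standard representation. The oriented frame bundle of $S^4$, pulled back to the non-compact manifold $S^4\times\BR$, then gives a globally split octonion algebra object whose $F_-$ is the pullback of $TS^4$.

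The repair is to work locally along $\mathfrak{m}$, using two elementary facts. (a) Every maximal ideal $\mathfrak{m}$ is a $z$-ideal: if $f\in\mathfrak{m}$ and $Z(g)\supseteq Z(f)$, then $g\in\mathfrak{m}$. Otherwise $1=h+kg$ with $h\in\mathfrak{m}$, and $h^2+f^2$ would be a nowhere-vanishing element of $\mathfrak{m}$. (b) If $M=K_1\cup\dots\cup K_N$ with closed $K_j=Z(\chi_j)$, then $\prod_j\chi_j=0$ and primality give $\chi_j\in\mathfrak{m}$ for some $j$.

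\emph{Non-split case.} Suppose $q(x)\in\mathfrak{m}$. Positivity gives $Z(x)=Z(q(x))$. Writing $C$ as a summand of $C^\infty(M)^N$, each coordinate satisfies $Z(x_i)\supseteq Z(x)$, so $x_i\in\mathfrak{m}$ by (a) and hence $x\in\mathfrak{m}C$. Thus the norm over $\kappa(\mathfrak{m})$ is anisotropic.

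\emph{Split case.} Cover $M$ by finitely many open sets $U_j$, each a disjoint union of small balls carrying a nowhere-vanishing isotropic section $x_j$; such local sections clearly exist. Shrink to closed $K_j\subseteq U_j$ still covering $M$, pick $j$ as in (b), and set $x=\rho x_j$ with $\rho=1$ on $K_j$ and $\operatorname{supp}\rho\subseteq U_j$. Then $q(x)=0$. Moreover $x\notin\mathfrak{m}C$: a representation $x=\sum f_iy_i$ with $f_i\in\mathfrak{m}$ would make $\sum_i f_i^2+\chi_j^2$ a nowhere-vanishing element of $\mathfrak{m}$.

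An octonion algebra over a field is split exactly when its norm is isotropic. Together the two cases therefore give the statement for every maximal ideal.
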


\begin{remark}
    We now present the definition of an octonion bundle. It is an octonion algebra object over a 7-dimensional Riemannian manifold, with bundle structure determined by the tangent bundle $TM$ and with bilinear form relating to the metric $g$ on $M$. In this sense, it is an octonion algebra object that is compatible with the structure of $(M, g)$ as a Riemannian manifold.
\end{remark}

\begin{definition}[Octonion bundle]\label{octbundef}
    Let $M$ be a 7-dimensional manifold with metric $g$. An \emph{octonion bundle} over $(M, g)$ is a globally non-split octonion algebra object $(F, \mathsf{m}, \mathsf{e}, \mathsf{b})$ in $\Vecbun(M)$ such that
    \begin{align*}
        \begin{cases}
            F=\Lambda^0\oplus TM\\
            \mathsf{b}=\mathsf{b}_g\\
            \mathsf{e}:\Lambda^0\hookrightarrow\Lambda^0\oplus TM.
        \end{cases}
    \end{align*}
    where $\mathsf{e}$ denotes the canonical inclusion $\mathsf{e}_p:v_p\mapsto (v_p, 0_p)\in\Lambda^0_p\oplus T_pM$ and where $\mathsf{b}_g$ denotes the bilinear form defined by
    \begin{align*}
        (\mathsf{b}_g)_p(a_p+\alpha_p, d_p+\delta_p)=2a_pd_p+2g_p(\alpha_p, \delta_p)\hspace{5mm} \forall \; a_p+\alpha_p, d_p+\delta_p\in\Lambda^0_p\oplus T_pM.
    \end{align*}
\end{definition}

\begin{remark}
    Since three of the four objects defining an octonion algebra object are fixed in an octonion bundle, the moduli of octonion bundles are simply the moduli of octonion algebra object multiplications $\mathsf{m}$ on $(\Lambda^0\oplus TM, \mathsf{b}, \mathsf{e})$. In turn, we can say that these correspond to the octonion multiplications $m$ on the quadratic module $(\Gamma(\Lambda^0\oplus TM), \Gamma(\mathsf{b}_g))$ with unit given by the unit section $e=1\in C^\infty(M)$ and with bilinear form $b_g=\Gamma(\mathsf{b}_g)$ defined by
    \begin{align*}
        b_g(a+\alpha, d+\delta)=2ad+2g(\alpha, \delta).
    \end{align*}
    A fact used later is that under $\mathsf{b}_g$, or equivalently $b_g$, any section of $\Lambda^0$ is orthogonal to any section of $TM$ in $\Lambda^0\oplus TM$.
\end{remark}

\begin{remark}\label{catremark}
    By an octonion bundle over $M$, for a 7-dimensional manifold $M$, we mean an octonion bundle over $(M, g)$ for some metric $g$ on $M$.

    We will denote the category of octonion bundles over $M$ by $\catname{octbun}(M)$. Note that it is a full subcategory of $\catname{octobj}_{\Vecbun(M)}$. Using the equivalence of categories of proposition \ref{octalgcatobjequivalence} we will mostly view $\catname{octbun}(M)$ as a full subcategory of $\Octalg_{C^\infty(M)}$. In light of this, we introduce the notation $\catname{octbun}_{C^\infty(M)}$ for this full subcategory, i.e., we have the following corollary of \ref{octalgcatobjequivalence}:
\end{remark}

\begin{corollary}\label{octbunalgsequiv}
    There is an isomorphism of categories
    \begin{align*}
        \Gamma:\catname{octbun}(M)\overset{\cong}{\longrightarrow}\catname{octbun}_{C^\infty(M)}.
    \end{align*}
\end{corollary}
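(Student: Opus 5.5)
The plan is to restrict the functor $\Gamma$ of Proposition \ref{octalgcatobjequivalence} to the full subcategory $\catname{octbun}(M)$ and check that it is bijective on objects and on hom-sets. Since $\catname{octbun}_{C^\infty(M)}$ is \emph{defined} (Remark \ref{catremark}) as the full subcategory of $\Octalg_{C^\infty(M)}$ whose objects are the images under $\Gamma$ of octonion bundles, the restricted functor is well-defined and lands in $\catname{octbun}_{C^\infty(M)}$. Full faithfulness is inherited. Proposition \ref{octalgcatobjequivalence} gives that $\Gamma$ is fully faithful on $\Octalgobj_{\Vecbun(M)}$, and both subcategories are full. Concretely, Theorem \ref{nesth1} identifies bundle isomorphisms $F\to F'$ with module isomorphisms $\Gamma(F)\to\Gamma(F')$, and the pushforward conditions \ref{octalgobjmorphrels} and \ref{octalgmorphrels} correspond under $\Gamma$ by functoriality.

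The remaining point, and the only one needing care, is to upgrade ``equivalence'' to ``isomorphism'', i.e.\ bijectivity on objects. Surjectivity holds by definition of $\catname{octbun}_{C^\infty(M)}$. For injectivity, take two octonion bundles $(\Lambda^0\oplus TM,\mathsf{m},\mathsf{e},\mathsf{b}_g)$ and $(\Lambda^0\oplus TM,\mathsf{m}',\mathsf{e},\mathsf{b}_{g'})$ with the same image. The underlying bundle and the unit $\mathsf{e}$ are literally fixed, so equal images force
\[
\Gamma(\mathsf{m})=\Gamma(\mathsf{m}') \quad\text{and}\quad \Gamma(\mathsf{b}_g)=\Gamma(\mathsf{b}_{g'})
\]
as maps on the same modules. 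By the faithfulness in Theorem \ref{nesth1}, these give $\mathsf{m}=\mathsf{m}'$ and $\mathsf{b}_g=\mathsf{b}_{g'}$. The metric is then recovered by restricting the bilinear form to $TM$ and halving it, so $g=g'$, and the two objects coincide.

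The main subtlety I expect is a convention issue rather than a mathematical one. One must read $\Gamma(\Lambda^0\oplus TM)$ as a fixed chosen module, for instance $C^\infty(M)\oplus\Gamma(TM)$ via the strict identification, rather than as something defined only up to isomorphism. Once $\Gamma$ is taken as the literal sections functor, it is injective on the structure maps, and no isomorphism classes intervene.
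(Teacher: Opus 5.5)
Your proposal is correct and takes essentially the paper's route: the paper gives this as an immediate corollary of Proposition \ref{octalgcatobjequivalence}, restricting the fully faithful functor $\Gamma$ to the full subcategory $\catname{octbun}(M)$ and defining $\catname{octbun}_{C^\infty(M)}$ as its image. Your explicit check that the restriction is injective on objects is what justifies calling it an isomorphism rather than only an equivalence. That check uses that the bundle $\Lambda^0\oplus TM$ and the unit $\mathsf{e}$ are fixed, so $\Gamma$ determines $\mathsf{m}$ and $\mathsf{b}_g$, and hence $g$; the paper leaves this implicit, relying on the 1:1-correspondence of Proposition \ref{objalgcorrprop}.
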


\subsection{Octonion bundles constructed from $G_2$-structures}\label{octbunfromg2}

We outline the treatment of $G_2$-structures in terms of octonion bundles conducted in \cite{Gri}. Throughout, $M$ will denote a 7-dimensional Riemannian manifold with metric $g$ and a fixed $G_2$-structure $\varphi$ with $g$ as its associated metric.

\begin{construction}\label{g2octbunconstr}
    In \cite{Gri} Grigorian defines structures there called \emph{octonion bundles} and shows that they correspond to $G_2$-structures. Such a structure is defined as the bundle $\BO M=\Lambda^0\oplus TM$ over $M$, with $\Lambda^0=M\times\BR$ the trivial bundle, endowed with a multiplication defined on its module of sections. In particular, Grigorian defines a \emph{vector cross product with respect to} $\varphi$ on $M$, with $\varphi$ a $G_2$-structure, denoted by $\times_\varphi$ and implicitly defined by
    \begin{align}\label{g2formula}
        g(\alpha\times_\varphi\beta, \gamma)=\varphi(\alpha, \beta, \gamma), \hspace{6mm} \alpha, \beta, \gamma\in \Gamma(TM),
    \end{align}
    see \cite[(3.5)]{Gri}. This implicit definition yields the following explicit description, after taking the relevant dual:
    \begin{align*}
        \alpha\times_\varphi\beta&:=((\alpha\wedge\beta)\lrcorner\varphi)^\flat\\
        &=\varphi(\alpha, \beta, -)^\flat.
    \end{align*}
    With the underlying bundle structure of $\BO M$ one can define a product denoted $\circ_\varphi$ on its sections, i.e., a multiplication on the projective rank 8 module $\Gamma(\BO M)$;
    \begin{align}
        A\circ_\varphi B=ab-g(\alpha, \beta)+a\beta+b\alpha+\alpha\times_\varphi\beta.
    \end{align}
    Here, $A, B\in\Gamma(\BO M)$ are sections of $\BO M$ with $A=a+\alpha$, $B=b+\beta$ such that $a, b\in\Gamma(\Lambda^0)$ and $\alpha, \beta\in\Gamma(TM)$. This is referred to as an \emph{octonion product}. In its original expression, in \cite[Definition 3.4.]{Gri}, Grigorian makes visible the real- and imaginary part of the product, i.e., which part is sent to $\Gamma(\Lambda^0)$ and to $\Gamma(TM)$ respectively:
    \begin{align*}
        ab-g(\alpha, \beta)\in\Gamma(\Lambda^0), \hspace{6mm} a\beta+b\alpha+\alpha\times_\varphi\beta\in\Gamma(TM).
    \end{align*}
\end{construction}

\begin{remark}\label{g2reconstruction}\label{bilinearformdiffrmk}
    The definitions and terminology of section \ref{octbunintrinsic} are of course made to coincide with the vocabulary of Grigorian. This is done mainly in order to provide a categorical and even more algebraic perspective on these octonion bundles and their properties. The definitions rely mostly on algebraic definitions translated to the context of vector bundles, and the further restrictions which yield octonion bundles are not made with reference to an existing $G_2$-structure (hence the name of section \ref{octbunintrinsic}), but rather to compatibility with the Riemannian structure of $M$, or $C^\infty(M)$.
    
    The above construction gives us a bilinear and quadratic form on $\Gamma(\BO M)$ uniquely determined by
    \begin{align*}
        b_g(a+\alpha, a+\alpha)=2q_g(a+\alpha)=2a^2+2g(\alpha, \alpha), \hspace{6mm} a\in\Gamma(\Lambda^0),\; \alpha\in\Gamma(TM).
    \end{align*}
\end{remark}

\begin{remark}\label{crossprodmultrmk}
    We now present some facts concerning construction \ref{g2octbunconstr}. Let $\BO M$ be an octonion bundle defined by $\varphi$ and let $\alpha, \beta\in\Gamma(TM)$ be vector fields. These can be viewed as sections of the octonion bundle $\BO M$ with real part equal to zero. With the definition of the octonion multiplication, we get that
    \begin{align*}
        \alpha\circ_\varphi\beta=-g(\alpha, \beta)+\alpha\times_\varphi\beta.
    \end{align*}
    This gives us that $\alpha\circ_\varphi\beta+g(\alpha, \beta)=\alpha\times_\varphi\beta$, so we get the $G_2$-structure $\varphi$ by equation \ref{g2formula}, i.e.,
    \begin{align*}
        \varphi(\alpha, \beta, \gamma)&=g(\alpha\times_\varphi\beta, \gamma)\\
        &=g(\alpha\circ_\varphi\beta+g(\alpha, \beta), \gamma)\\
        &=g(\alpha\circ_\varphi\beta, \gamma)+g(g(\alpha, \beta), \gamma)\\
        &=g(\alpha\circ_\varphi\beta, \gamma)
    \end{align*}
    where $g(g(\alpha, \beta), \gamma)=0$ since $g(\alpha, \beta)\in\Gamma(\Lambda^0)$ is a section of the trivial bundle and since $\gamma\in\Gamma(TM)$ is taken to be a vector field. This means that we recover the $G_2$-structure uniquely from its associated octonion bundle, so the map sending $G_2$-structures to algebras, or bundles with additional data, is injective.
\end{remark}

\subsection{Correspondence}\label{mainthmsection}

We begin by stating the main motivating result for studying octonion bundles:

\begin{theorem}\label{maintheorem}
    Let $M$ be a Riemannian manifold with vanishing first and second Stiefel-Whitney classes. Fix a $G_2$-metric $g$. Then there is a 1:1-correspondence
    \begin{align*}
        \{\text{$G_2$-structures over $M$ with metric $g$}\}
    \end{align*}
    \begin{align*}
        \updownarrow 1:1
    \end{align*}
    \begin{align*}
        \{\text{Octonion bundles over $(M, g)$}\}
    \end{align*}
\end{theorem}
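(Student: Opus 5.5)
I will construct maps in both directions and show they are mutually inverse. For the forward direction, fix a $G_2$-structure $\varphi$ with associated metric $g$. Construction \ref{g2octbunconstr} gives the data $(\Lambda^0\oplus TM, \circ_\varphi, 1, b_g)$ on the module of sections $P=\Gamma(\Lambda^0\oplus TM)$. I first check that this is an octonion algebra over $C^\infty(M)$ in the sense of definition \ref{octalgdef}. By proposition \ref{objalgcorrprop} and lemma \ref{octalgobjfibre}, every axiom can be checked fibrewise, since all diagrams in definition \ref{octalgobjdef} are equalities of bundle maps. At a point $p$, choose a frame $u\in(F_\varphi)_p$, which exists by proposition \ref{G2definite3form}. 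Then $u^*\varphi_p=\varphi_0$ and $u$ is an isometry for $g_p$, since $G_2$ preserves the standard metric. Transporting along $u$, the fibre $(T_pM,\times_{\varphi_p})$ becomes $(\BR^7,\times_{\varphi_0})$, and $\circ_{\varphi_p}$ becomes the standard octonion product $ab-\langle\alpha,\beta\rangle+a\beta+b\alpha+\alpha\times\beta$ on $\BR\oplus\BR^7$. This is the classical non-split octonion algebra, with norm $a^2+|\alpha|^2$ (see \cite{SV}, \cite{Bry}). Hence each fibre is a real non-split octonion algebra whose bilinear form is $(\mathsf{b}_g)_p$. Nondegeneracy of $b_g$ then follows from lemma \ref{localdegen=sectiondegen}, unit and composition hold fibrewise, and the object is globally non-split. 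So it is an octonion bundle over $(M,g)$. Smoothness of $\mathsf{m}$ follows because $\times_\varphi$ is obtained from the smooth tensors $\varphi$ and $g$.

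Injectivity of this assignment is already contained in remark \ref{crossprodmultrmk}: $\varphi(\alpha,\beta,\gamma)=g(\alpha\circ_\varphi\beta,\gamma)$.

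For the inverse, start from an octonion bundle $(\Lambda^0\oplus TM,\mathsf{m},\mathsf{e},\mathsf{b}_g)$. Define $\varphi_{\mathsf m}(\alpha,\beta,\gamma):=g(\alpha\beta,\gamma)$ for $\alpha,\beta,\gamma\in\Gamma(TM)$. This is $C^\infty(M)$-trilinear, so it defines a tensor. It is alternating by the relations of proposition \ref{relationslemma}. For vector fields, $\bar\alpha=-\alpha$ because $b(\alpha\otimes 1)=0$, so \eqref{masterformula} gives $\alpha\beta+\beta\alpha=-b(\alpha\otimes\beta)$, whose $TM$-part vanishes; hence $\varphi_{\mathsf m}$ is antisymmetric in the first two slots. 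The identity $b(xy\otimes z)=b(y\otimes\bar x z)$ from \eqref{3formrels} gives $g(\alpha\beta,\gamma)=-g(\beta,\alpha\gamma)$, which yields antisymmetry in another pair of slots. It remains to show $\varphi_{\mathsf m}$ is definite with associated metric $g$. This is the main obstacle. I will argue fibrewise again. The fibre $(\BR\oplus T_pM,\mathsf m_p,1,(\mathsf b_g)_p)$ is a real non-split octonion algebra, and its unit and orthogonal complement are $\BR$ and $T_pM$. By uniqueness of the non-split real octonion algebra (remark \ref{uniqueoctalg/field}, via \cite{SV}) there is an algebra isomorphism to the standard octonions. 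Such an isomorphism preserves the unit and the norm, so it restricts to an isometry $u:\BR^7\to T_pM$ carrying the standard imaginary product to $\mathsf m_p$. Consequently $u^*\varphi_{\mathsf m,p}=\varphi_0$, with the normalisation of $\varphi_0$ chosen to match the standard cross product. So $\varphi_{\mathsf m}$ is a definite 3-form whose metric is $g$, since $u$ is $g$-orthonormal and the metric determined by $\varphi_0$ is the standard one.

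Finally, I check that the two constructions are mutually inverse. Starting from $\mathsf m$, the product $\circ_{\varphi_{\mathsf m}}$ agrees with $\mathsf m$. On $\Lambda^0$ components both are forced by the unit. On vector fields, \eqref{masterformula} gives the real part of $\alpha\beta$ as $-\frac12 b(\alpha\otimes\beta)=-g(\alpha,\beta)$, and the imaginary part of $\alpha\beta$ is $(\varphi_{\mathsf m}(\alpha,\beta,-))^\flat=\alpha\times_{\varphi_{\mathsf m}}\beta$ by definition. Bilinearity then gives equality on all sections. The composite in the other direction is the identity by remark \ref{crossprodmultrmk}. The fibrewise identification of a non-split real octonion algebra with the standard model, including matching the sign conventions of $\varphi_0$, is the step I expect to require the most care.
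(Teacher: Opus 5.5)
Your proposal is correct and follows essentially the same route as the paper. The forward direction uses Grigorian's product, injectivity comes from $\varphi(\alpha,\beta,\gamma)=g(\alpha\circ_\varphi\beta,\gamma)$, and the converse uses the form $\tfrac12 b_g(m(-\otimes-)\otimes-)$, shown alternating via \eqref{masterformula} and \eqref{3formrels} and definite by identifying each fibre with the standard non-split octonions. You also check three things the paper leaves implicit: that Grigorian's product is fibrewise an octonion bundle, that the metric of $\varphi_{\mathsf m}$ is $g$, and that $\mathsf m\mapsto\varphi_{\mathsf m}\mapsto\circ_{\varphi_{\mathsf m}}$ is the identity. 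In that last check, \eqref{masterformula} alone only gives the real part of $\alpha\beta+\beta\alpha$, so get the real part of $\alpha\beta$ from $b(\alpha\beta\otimes 1)=b(\beta\otimes\bar\alpha)=-b(\alpha\otimes\beta)$ via \eqref{3formrels}.
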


\begin{remark}\label{pfrmk}
    Both the statement and the proof are in the spirit of Grigorian and are formulated to fit the categorical framework. One direction has already been shown, in construction \ref{g2octbunconstr} and remark \ref{g2reconstruction}. It gives us a map from $G_2$-structures to octonion bundles and that this map is injective. For clarity we show the other direction.

    For the other direction, let $F$ be an octonion bundle with underlying bundle $F=\Lambda^0\oplus TM$ and bilinear form $b=b_g$, i.e., for $a+\alpha=A\in \Gamma(\Lambda^0)\oplus\Gamma(TM)$ we have $b_g(A, A)=2a^2+2g(\alpha, \alpha)$. We then define a 3-form on $TM$ by
    \begin{align*}
        \varphi_F(\alpha, \beta, \gamma):=\frac{1}{2}b(m(\alpha\otimes\beta)\otimes \gamma).
    \end{align*}
    Note the compatibility with remark \ref{crossprodmultrmk} via remark \ref{bilinearformdiffrmk}, as the definition is stated with respect to the bilinear form $b$. We prove that this is a 3-form which on every fibre is equal to the standard form $\varphi_0$ with adequate choice of basis. This will conclude the proof of theorem \ref{maintheorem}.
\end{remark}

\begin{proposition}\label{varphi_Cconstr}
    Let $(\Gamma(\Lambda^0\oplus TM), m, e, b_g)$ be an octonion bundle over $M$. Then
    \begin{align*}
        \frac{1}{2}b_g(m(-\otimes -)\otimes -):\Gamma(TM)\otimes\Gamma(TM)\otimes\Gamma(TM)\to \Lambda^0
    \end{align*}
    determines a positive 3-form, i.e., a $G_2$-structure on $M$.
\end{proposition}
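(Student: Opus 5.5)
The plan is to verify, in order, that the map
\begin{align*}
    \varphi_F(\alpha, \beta, \gamma):=\tfrac{1}{2}b_g(m(\alpha\otimes\beta)\otimes\gamma)
\end{align*}
is $C^\infty(M)$-trilinear, that it is alternating, and that at each point $p\in M$ it lies in the $\GL_7(\BR)$-orbit of $\varphi_0$. The last property makes it a definite 3-form, so it defines a $G_2$-structure by proposition \ref{G2definite3form}.

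\textbf{Trilinearity and smoothness.} Trilinearity is immediate, since $m$ and $b_g$ are $C^\infty(M)$-linear on tensor products. By the identification $\bigwedge^3\Gamma(T^\vee M)\cong\Gamma(\bigwedge^3T^\vee M)$, the form is then a smooth section, once it is shown to be alternating.

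\textbf{Alternation.} I will use proposition \ref{relationslemma} together with the orthogonality of $\Gamma(\Lambda^0)$ and $\Gamma(TM)$ under $b_g$.
\begin{itemize}
    \item For vector fields $\alpha, \beta$ we have $b_g(\alpha\otimes 1)=b_g(\beta\otimes 1)=0$. Formula \ref{masterformula} then gives $\alpha\beta+\beta\alpha=-b_g(\alpha\otimes\beta)\in\Gamma(\Lambda^0)$.
    \item Pairing with a vector field $\gamma$ kills this scalar term. Hence $b_g(\alpha\beta\otimes\gamma)=-b_g(\beta\alpha\otimes\gamma)$, and in particular $b_g(\alpha\alpha\otimes\gamma)=0$.
    \item For $\alpha\in\Gamma(TM)$ the involution gives $\bar\alpha=-\alpha$. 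The first relation of \ref{3formrels} then yields $b_g(\alpha\beta\otimes\gamma)=-b_g(\beta\otimes\alpha\gamma)=-b_g(\alpha\gamma\otimes\beta)$, using symmetry of $b_g$. So $\varphi_F$ is also antisymmetric in the last two slots.
\end{itemize}
Since transpositions $(12)$ and $(23)$ generate $S_3$, the form is alternating.

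\textbf{Pointwise standard form.} This is the real content and the main obstacle. By lemma \ref{octalgobjfibre}, $(F_p,\mathsf{m}_p,\mathsf{e}_p,\mathsf{b}_p)$ is a real octonion algebra. It is non-split by definition \ref{octbundef}; consistently, $\mathsf{b}_p$ is positive definite, with $\frac12\mathsf{b}_p$ restricting to $g_p$ on $T_pM=1^\perp$.

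I will build an orthonormal basis of $T_pM$ in which $g_p(e_ie_j,e_k)$ reproduces exactly the coefficients of $\varphi_0$, by Cayley--Dickson doubling as in \cite{SV}:
\begin{enumerate}
    \item Pick orthonormal $e_1,e_2\in T_pM$ and set $e_3=e_1e_2$.
    \item Pick a unit $e_4$ orthogonal to $\mathrm{span}\{1,e_1,e_2,e_3\}$.
    \item Define $e_5,e_6,e_7$ as suitable products of $e_4$ with $e_1,e_2,e_3$.
\end{enumerate}
The relations \ref{3formrels} and \ref{barmultrel}, applied on the fibre, show that this is an orthonormal basis and compute all products $e_ie_j$. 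Alternatively, one can invoke uniqueness of the non-split real octonions (remark \ref{uniqueoctalg/field}) to get an algebra isomorphism $F_p\cong\BO$ fixing $1$, hence an isometry of imaginary parts, and transport a standard basis.

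The delicate point is matching signs and ordering to the specific $\varphi_0$ of the paper. I would first try choosing $e_5,e_6,e_7$ (including signs) so that $e_1e_4=e_5$, $e_1e_6=e_7$, $e_2e_4=e_6$, $e_2e_5=-e_7$, and so on, reading these off from the seven terms of $\varphi_0$. The alternation proven above means only these seven triples need checking. If a convention mismatch appears, it can be absorbed by a permutation or sign change of basis vectors, since we only need $\varphi_p$ to lie in the $\GL_7$-orbit.

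\textbf{Conclusion.} The frame $u:\BR^7\to T_pM$ sending the standard basis to $e_1,\dots,e_7$ satisfies $u^*\varphi_{F,p}=\varphi_0$. Hence $\varphi_F$ takes values in $\bigwedge^3_+(T^\vee M)$. Since this condition is pointwise, no smooth choice of frames is needed. Moreover, $u$ is an isometry for $g_p$, so the associated metric is $g$.
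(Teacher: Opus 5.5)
Your proposal is correct and is essentially the paper's proof. Alternation comes from proposition \ref{relationslemma} together with $\bar\alpha=-\alpha$; you get the $(12)$-antisymmetry from \ref{masterformula} rather than by combining the second and third relations of \ref{3formrels}, which is a harmless variation. The pointwise identification with $\varphi_0$ then comes from an adapted orthonormal basis of the real division octonion algebra $F_p$, which the paper supplies as an explicit multiplication table.

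The sign check you defer does close. Take $e_3=e_1e_2$, $e_5=e_1e_4$, $e_6=e_2e_4$ and $e_7=e_1e_6$. The doubling formula $a(be_4)=(ba)e_4$ for $a,b\in\mathrm{span}\{1,e_1,e_2,e_3\}$ then gives $e_2e_5=e_3e_4=-e_7$ and $e_3e_5=-e_6$. This is exactly the paper's table and reproduces $\varphi_0$. So the signed-permutation fallback is unnecessary, and it would in any case presuppose that $\varphi_0$ arises from some octonion basis.
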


\begin{proof}
    We inspect the involution defined in the statement of lemma \ref{relationslemma}:
    \begin{align*}
        \Bar{A}=b(A\otimes e)-A
    \end{align*}
    where $e\in\Gamma(\Lambda^0)$ is the unit. By the definition of $b$ (and remark \ref{g2reconstruction}) the sections of $\Lambda^0$ are orthogonal to those of $TM$. Since there is a decomposition $A=a+\alpha$, with $a$ and $\alpha$ sections of $\Lambda^0$ and $TM$ respectively, the above expression reads
    \begin{align*}
        \Bar{A}&=b(A\otimes e)-A\\
        &=b((a+\alpha)\otimes e)-A\\
        &=b(a\otimes e)+b(\alpha\otimes e)-A\\
        &=2a-A=a-\alpha.
    \end{align*}
    For $A=\alpha\in\Gamma(TM)\subseteq\Gamma(\Lambda^0\oplus TM)$, a section of $F$ which is zero on $\Lambda^0$, one then gets that $\Bar{\alpha}=-\alpha$. We now apply the relations \ref{3formrels}: Let $\alpha, \beta, \gamma\in\Gamma(TM)$ be sections of $F$ which are zero over $\Lambda^0$. Writing the multiplication $m(\alpha\otimes\beta)$ as $\alpha\beta$ we get that
    \begin{align*}
        b(\alpha\beta\otimes \gamma)&=b(\beta\otimes \Bar{\alpha}\gamma) &&\textnormal{Relation \ref{3formrels}(1)}\\
        &=b(\beta\otimes -\alpha\gamma) &&\textnormal{by the above}\\
        &=-b(\alpha\gamma\otimes \beta) &&\textnormal{$b$ symmetric bilinear form}.
    \end{align*}
    and, using the remaining relations \ref{3formrels}(2) and \ref{3formrels}(3) in the same way as above we get
    \begin{align}\label{propproofrel}
        b(\alpha\beta\otimes \gamma)=-b(\gamma\beta\otimes \alpha), \hspace{6mm} -b(\alpha\beta\otimes \gamma)=-b(\beta\gamma\otimes \alpha).
    \end{align}
    So, using both of the relations of \ref{propproofrel}, one gets
    \begin{align*}
        b(\gamma\beta\otimes \alpha)=-b(\beta\gamma\otimes \alpha)
    \end{align*}
    which constitutes the remaining type of transposition, namely that of the first two entries. Thus we have shown that $b(m(- \otimes -)\otimes -)$, and in effect $\frac{1}{2}b(m(- \otimes -)\otimes -)$, is a 3-form, as all of the constituent operations are $C^\infty(M)$-linear.

    We now show that this 3-form is positive, i.e., that it is isomorphic to the standard form $\varphi_0$ on every fibre. We use that $F$ is an octonion bundle and not only an octonion algebra object; Let $p\in M$ be any point and denote by $F_p$ the real non-split octonion algebra over $\BR$. Denote by $m_p$ and $b_p$ the multiplication and bilinear form associated to the fibre. This octonion algebra is, by remark \ref{uniqueoctalg/field}, isomorphic to the standard octonion algebra $\BR\oplus\BR^7$ with basis $\{e_0, \dots, e_7\}$ given by the multiplication table
    \begin{center}
        \begin{tabular}{|c|c|c|c|c|c|c|c|c|c|c|}
        \hline
         mult. & $e_0$  & $e_1$ & $e_2$ & $e_3$ & $e_4$ & $e_5$ & $e_6$ & $e_7$ \\
         \hline
         $e_0$  & $e_0$ & $e_1$ & $e_2$ & $e_3$ & $e_4$ & $e_5$ & $e_6$ & $e_7$ \\
         \hline
         $e_1$  & $e_1$ & $-e_0$ & $e_3$ & $-e_2$ & $e_5$ & $-e_4$ & $e_7$ & $-e_6$ \\
         \hline
         $e_2$  & $e_2$ & $-e_3$ & $-e_0$ & $e_1$ & $e_6$ & $-e_7$ & $-e_4$ & $e_5$ \\
         \hline
         $e_3$  & $e_3$ & $e_2$ & $-e_1$ & $-e_0$ & $-e_7$ & $-e_6$ & $e_5$ & $e_4$ \\
         \hline
         $e_4$  & $e_4$ & $-e_5$ & $-e_6$ & $e_7$ & $-e_0$ & $e_1$ & $e_2$ & $-e_3$ \\
         \hline
         $e_5$  & $e_5$ & $e_4$ & $e_7$ & $e_6$ & $-e_1$ & $-e_0$ & $-e_3$ & $-e_2$ \\
         \hline
         $e_6$  & $e_6$ & $-e_7$ & $e_4$ & $-e_5$ & $-e_2$ & $e_3$ & $-e_0$ & $e_1$ \\
         \hline
         $e_7$  & $e_7$ & $e_6$ & $-e_5$ & $-e_4$ & $e_3$ & $e_2$ & $-e_1$ & $-e_0$ \\
         \hline
        \end{tabular}
    \end{center}
    and $\BR$-bilinear form defined by 
    \begin{align*}
        b_p(e_i, e_i)=2, \hspace{6mm} b_p(e_i, e_j)=0 \; \text{ for every } \; i\neq j.
    \end{align*}
    On the fibre $F_p$ the resulting 3-form
    \begin{align*}
        \varphi_p(v, w, z):=\frac{1}{2}b_p(m_p(v\otimes w)\otimes z)
    \end{align*}
    is determined by its values on basis vectors, of which the non-zero terms are
    \begin{align*}
        \begin{cases}
            \varphi_p(e_1, e_2, e_3)=1\\
            \varphi_p(e_1, e_4, e_5)=1\\
            \varphi_p(e_1, e_6, e_7)=1\\
            \varphi_p(e_2, e_4, e_6)=1\\
            \varphi_p(e_2, e_5, e_7)=-1\\
            \varphi_p(e_3, e_4, e_6)=-1\\
            \varphi_p(e_3, e_5, e_6)=-1
        \end{cases}
    \end{align*}
    which agrees with the 3-form
    \begin{align*}
        \varphi_0=e^{123}+e^{145}+e^{167}+e^{246}-e^{257}-e^{347}-e^{356}.
    \end{align*}
    This concludes the proof.
\end{proof}

\begin{remark}
    The above proposition gives us the remaining implication in the proof of theorem \ref{maintheorem}. A consequence of the theorem is that any octonion algebra object $F=\Lambda^0\oplus TM$ with bilinear form compatible with the metric $g$ of a Riemannian manifold $M$ is an octonion bundle, which in turn corresponds to an octonion algebra over $C^\infty(M)$. This is due to the fact that the metric $g$ is a positive-definite bilinear form.

    Due to the correspondence it is clear that octonion bundles, with underlying bundle $\Lambda^0\oplus TM$, have a bilinear form given by a $G_2$-metric. Furthermore, thanks to proposition \ref{octalgobjfibre}, the issue of finding a $G_2$-structure translates to the problem of finding an octonion multiplication $m$ that makes $(\Gamma(\Lambda^0\oplus TM), 2g=b_g, m, e)$ into an octonion algebra which, when localised to an arbitrary point, results in a non-split octonion algebra over $\BR$.
\end{remark}

\subsubsection{Categorical perspective}\label{equivalencesection}

We return to the category of $G_2$-structures, as in \ref{g2cat3form} (and, due to \ref{G2catequiv}, equivalently as in \ref{g2catprinc}), that makes the correspondence of theorem \ref{maintheorem} into an equivalence of categories.

\begin{proposition}\label{equivalencetheorem}
    The correspondence of theorem \ref{maintheorem} updates to an isomorphism of categories
    \begin{align*}
        \catname{G2str}(M)\overset{\sim}{\longrightarrow} \catname{octbun}_{C^\infty(M)}.
    \end{align*}
\end{proposition}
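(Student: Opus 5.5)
We define a functor $\Phi:\catname{G2str}(M)\to\catname{octbun}_{C^\infty(M)}$ and check that it is bijective on objects and on each hom-set. By proposition \ref{G2catequiv} we may work with $\catname{def3form}(M)$. By corollary \ref{octbunalgsequiv} we may work either with octonion bundles or with their section algebras, whichever is more convenient.

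\emph{Objects.} A definite 3-form $\varphi$ determines its associated metric $g_\varphi$. We send $\varphi$ to $\Phi(\varphi):=(\Gamma(\Lambda^0\oplus TM),\circ_\varphi,1,b_{g_\varphi})$, as in construction \ref{g2octbunconstr}. Fibrewise this is the standard non-split real octonion algebra, so $\Phi(\varphi)$ is an octonion bundle over $(M,g_\varphi)$. By remark \ref{catremark}, objects of $\catname{octbun}$ are octonion bundles over $(M,g)$ for varying $g$. Theorem \ref{maintheorem}, applied metric by metric, then shows that $\Phi$ is a bijection on objects. Its inverse is $F\mapsto\varphi_F=\tfrac12 b(m(-\otimes-)\otimes-)$, by remark \ref{crossprodmultrmk} and proposition \ref{varphi_Cconstr}; each definite 3-form lies in exactly one metric class.

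\emph{Morphisms.} Let $f:\varphi\to\varphi'$ be a morphism, i.e.\ $f_*\varphi=\varphi'$. Set $\Phi(f):=\id\oplus f$ on $\Gamma(\Lambda^0)\oplus\Gamma(TM)$. We must verify the three relations \ref{octalgmorphrels}.
\begin{enumerate}[label=(\roman*)]
\item The unit is preserved trivially.
\item The metric determined by a definite 3-form is natural under $\GL_7$, so $g_{f_*\varphi}=f_*g_\varphi$. Hence $(\id\oplus f)_*b_{g_\varphi}=b_{g_{\varphi'}}$.
\item For the product, the real part $ab-g(\alpha,\beta)$ and the terms $a\beta+b\alpha$ clearly transform correctly. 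The cross product satisfies $f(\alpha\times_\varphi\beta)=f\alpha\times_{f_*\varphi}f\beta$. This follows from the defining relation \ref{g2formula} together with the transformation rules for $\varphi$ and $g$ just established, plus non-degeneracy of $g'$.
\end{enumerate}
Functoriality, $\Phi(f'\circ f)=\Phi(f')\circ\Phi(f)$ and $\Phi(\id)=\id$, is immediate.

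\emph{Bijectivity on hom-sets.} Injectivity is clear, since $\id\oplus f$ determines $f$. For surjectivity, let $h:\Phi(\varphi)\to\Phi(\varphi')$ be a morphism of octonion algebras. Since $h(e)=e$ and $h$ is $C^\infty(M)$-linear, $h$ is the identity on $\Gamma(\Lambda^0)=C^\infty(M)e$. Since $h$ carries $b_{g}$ to $b_{g'}$, it maps the $b_g$-orthogonal complement $\Gamma(TM)$ of $e$ onto the $b_{g'}$-orthogonal complement, which is again $\Gamma(TM)$. Hence $h=\id\oplus f$ for some automorphism $f$ of $\Gamma(TM)$. Finally,
\[
f_*\varphi=\tfrac12 (h_*b)\bigl((h_*m)(-\otimes-)\otimes-\bigr)=\varphi',
\]
using the inverse formula from remark \ref{crossprodmultrmk}, so $f$ is a morphism in $\catname{def3form}(M)$ with $\Phi(f)=h$.

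I expect the main obstacle to be the naturality statements in step (iii) and step (ii): that the metric induced by $f_*\varphi$ is $f_*g$, and that $\times_\varphi$ is equivariant. These must be argued fibrewise from the $\GL_7(\BR)$-orbit description of $\bigwedge^3_+$. One must also take care that octonion bundles over different metrics really are objects of the same category, which is how remark \ref{catremark} is to be read.
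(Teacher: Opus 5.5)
Your proposal is correct and follows essentially the same route as the paper. Both send $\varphi$ to Grigorian's algebra $(\Gamma(\Lambda^0\oplus TM),\circ_\varphi,1,b_{g_\varphi})$ with inverse $\varphi_F=\tfrac12 b(m(-\otimes-)\otimes-)$, match morphisms via $f\mapsto\id\oplus f$ and $h\mapsto h|_{\Gamma(TM)}$, and use the same pushforward computation to get $f_*\varphi=\varphi'$. Your added justifications make explicit steps the paper only asserts, where it says merely that $f$ ``descends'' to $\Gamma(TM)$ because it preserves the span of the identity. These are: that $h$ preserves $\Gamma(TM)$ as the $b_g$-orthogonal complement of $e$, that $g_{f_*\varphi}=f_*g_\varphi$, and that $f(\alpha\times_\varphi\beta)=f\alpha\times_{f_*\varphi}f\beta$.
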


\begin{proof}
    We will view the $G_2$-structures as positive 3-forms, since the correspondence is stated in these terms, but note that due to lemma \ref{G2catdefcoh} the result applies to $G_2$-structures in the classical sense.

    For a morphism $f:C\to C'$ of octonion algebras we get that the multiplication, identity and bilinear form on $C'$ are given by $f_*m$, $f_*e$ and $f_*b$ respectively, where $m$, $e$ and $b$ are the multiplication, identity and bilinear form on the octonion algebra $C$. This comes from definition \ref{octalgcatdef}. Assuming that these octonion algebras are octonion bundles we get associated $G_2$-structures $\varphi_C$ and $\varphi_{C'}$ given by the correspondence \ref{maintheorem}. In particular, the morphism of octonion algebras is given by a morphism $f:\Gamma(\Lambda^0\oplus TM)\to \Gamma(\Lambda^0\oplus TM)$ that preserves the (span of the) identity, hence $\Gamma(\Lambda^0)$. As such, the morphism  $f$ descends to an automorphism of $\Gamma(TM)$, which we also denote by $f$. Applying the expression of $\varphi_{C'}$ in terms of $b'$ and $m'$ together with $b'=f_*b$ and $m'=f_*m$ gives us the following:
    \begin{align}\label{pushfequivcalc}
        \varphi_{C'}(\alpha, \beta, \gamma)&\overset{\ref{varphi_Cconstr}}{=}\tfrac{1}{2}b'(m'(\alpha\otimes\beta)\otimes\gamma)&&\nonumber\\
        &=\tfrac{1}{2}(f_*b)((f_*m)(\alpha\otimes\beta)\otimes\gamma)&&\hspace{6mm}\text{definition \ref{octalgpushf}}\nonumber\\
        &=\tfrac{1}{2}b(f^{-1}(f(m(f^{-1}(\alpha)\otimes f^{-1}(\beta)))\otimes f^{-1}(\gamma))    &&\nonumber\\
        &=\tfrac{1}{2}b(m(f^{-1}(\alpha)\otimes f^{-1}(\beta))\otimes f^{-1}(\gamma))&&\nonumber\\
        &=\varphi_C(f^{-1}\otimes f^{-1}\otimes f^{-1})(\alpha, \beta, \gamma)&&\nonumber\\
        &=(f_*\varphi_C)(\alpha, \beta, \gamma).
    \end{align}
    Hence, for any morphism $f:C\to C'$ of octonion bundles we have $\varphi_{C'}=f_*\varphi_C$.

    In the other direction we let $\varphi$ and $\varphi'$ be $G_2$-structures related by $\varphi'=f_*\varphi$ for $f:\Gamma(TM)\to\Gamma(TM)$ an isomorphism, i.e., $f$ a morphism from $\varphi$ to $\varphi'$. Assume that these have respective metrics $g$ and $g'$. Then there is an isomorphism $\id\oplus f:\Gamma(\Lambda^0\oplus TM)\to \Gamma(\Lambda^0\oplus TM)$ of $C^\infty(M)$-modules, which are the underlying modules of the octonion algebras $C_\varphi$ and $C_{\varphi'}$ respectively. The metric is transported accordingly; $g'(\alpha, \beta)=(f_*g)(\alpha, \beta)$, with the pushforward defined like that of the associated bilinear form. Since the morphism preserves the identity $e:\Gamma(\Lambda^0)\to \Gamma(\Lambda^0\oplus TM)$ we inspect the remaining datum, namely the multiplication:
    \begin{align*}
        f_*m_{C_\varphi}(a+\alpha, b+\beta)&=fm_{C_\varphi}(f^{-1}(a+\alpha), f^{-1}(b+\beta))\\
        &=fm_{C_\varphi}(a+f^{-1}(\alpha), b+f^{-1}(\beta))\\
        &=f(ab-g(f^{-1}(\alpha), f^{-1}(\beta))+af^{-1}(\beta)+bf^{-1}(\alpha)+\varphi(f^{-1}(\alpha), f^{-1}(\beta), -)^\flat)\\
        &=ab-g(f^{-1}(\alpha), f^{-1}(\beta))+a\beta+b\alpha+\varphi(f^{-1}(\alpha), f^{-1}(\beta), -)^\flat\\
        &=ab-g'(\alpha, \beta)+a\beta+b\alpha+\varphi'(\alpha, \beta, -)^\flat\\
        &=m_{C_{\varphi'}}(a+\alpha, b+\beta)
    \end{align*}
    With this we have shown that there is a bijection
    \begin{align*}
        \Hom_{\Octalg_{C^\infty(M)}}(C, C')&\longrightarrow\Hom_{\catname{G2Str}(M)}(\varphi, \varphi')\\
        [f\in\Aut(\Gamma(\Lambda^0\oplus TM))]&\longmapsto[f\mid_{\Gamma(TM)}\in\Aut(\Gamma(TM))].
    \end{align*}
    This way, we have shown that there is a fully faithful functor from the category of $G_2$-structures on $M$ to the category of octonion bundles on $M$. Since it is a 1:1-correspondence of objects it constitutes an equivalence of categories and, in particular, an isomorphism of categories.
\end{proof}

\subsubsection{Pfister forms and $G_2$-metrics}\label{Pfisterformsg2metrics}

Due to the correspondence \ref{maintheorem}, a Riemannian manifold $(M, g)$ admits a $G_2$-structure if and only if the vector bundle $\Lambda^0\oplus TM$ with corresponding bilinear form $b_g$ admits the structure of an octonion bundle, i.e., admits an octonion multiplication that is the non-split octonion multiplication on fibres. Due to remark \ref{Pfisterformintro}, this only occurs if the norm associated to $(b_g)_p$ is a Pfister form for every $p\in M$. For an arbitrary Pfister form, the resulting real octonion algebra can be either split or division. Since a Riemannian metric $g$ on $M$ is already guaranteed to be positive-definite, we get that the resulting octonion algebras on fibres $(\Lambda^0_p\oplus TM_p, m_p, e_p, (b_g)_p)$ are division. Thus, we have the following local characterisation of $G_2$-metrics.

\begin{proposition}\label{g2metricpt}
    For $(M, g)$ a Riemannian manifold, the metric $g$ is a $G_2$-metric if and only if for every point $p\in M$ there exists a choice of basis $\{e_1, \dots, e_7\}$ for $T_pM$ such that the quadratic form
    \begin{align*}
        q_p:\Lambda^0_p\oplus T_pM&\to \BR\\
        x&\mapsto g_p(x, x)=\frac{1}{2}(b_g)_p(x, x)
    \end{align*}
    is given by
    \begin{align*}
        q_p=e_0^2+\alpha e_1^2+\beta e_2^2+\alpha\beta e_3^2+\gamma e_4^2+\alpha\gamma e_5^2+\beta\gamma e_6^2+\alpha\beta\gamma e_7^2
    \end{align*}
    for some non-zero $\alpha, \beta, \gamma\in\BR$.
\end{proposition}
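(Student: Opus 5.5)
The plan is to prove the two implications separately, routing both through Theorem \ref{maintheorem} and Remark \ref{Pfisterformintro}. Throughout, $M$ is the $7$-manifold of Theorem \ref{maintheorem}, so $w_1$ and $w_2$ vanish. This hypothesis is essential for the reverse direction. The pointwise Pfister condition on its own carries no global information. Indeed, a positive-definite form can always be diagonalised by Gram--Schmidt to $e_0^2+\dots+e_7^2$, which is the Pfister form with $\alpha=\beta=\gamma=1$. So the real content of the statement is global, and the proof has to reflect that.

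\emph{Forward direction.} Suppose $g$ is a $G_2$-metric, and pick a $G_2$-structure $\varphi$ whose associated metric is $g$. By Theorem \ref{maintheorem}, and concretely Construction \ref{g2octbunconstr}, this gives an octonion bundle $(\Lambda^0\oplus TM,\circ_\varphi,\mathsf{e},\mathsf{b}_g)$. By Lemma \ref{octalgobjfibre}, each fibre $(\Lambda^0_p\oplus T_pM, m_p, e_p, (b_g)_p)$ is a real octonion algebra with norm $q_p$. Remark \ref{Pfisterformintro} then says $q_p$ is a threefold Pfister form in a suitable basis. One can also see this directly. Take $e_0=\mathsf{e}_p(1)$, which is orthogonal to $T_pM$ by the definition of $\mathsf{b}_g$. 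Take $e_1,\dots,e_7$ to be a $g_p$-orthonormal basis of $T_pM$ adapted to $\varphi_p$, as in the proof of Proposition \ref{varphi_Cconstr}. In this basis $q_p=\sum_i e_i^2$, which is the Pfister form with $\alpha=\beta=\gamma=1$.

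\emph{Reverse direction.} Assume the pointwise condition. By Theorem \ref{g2obstr}, $M$ admits some $G_2$-structure $\varphi'$, whose associated metric $g'$ may differ from $g$. The plan is to transport $\varphi'$ to a structure with metric $g$ using a morphism in $\catname{def3form}(M)$.
\begin{enumerate}
    \item Define the $g'$-self-adjoint, positive endomorphism $h$ of $TM$ by $g(u,v)=g'(hu,v)$.
    \item Set $f=h^{1/2}$. This is smooth because the positive square root is a smooth (indeed analytic) function on positive operators, and it satisfies $g'(fu,fv)=g(u,v)$.
    \item Form $f_*\varphi'=\varphi'\circ(f\otimes f\otimes f)$, taking the pushforward along $f^{-1}$ in the sense of Definition \ref{g2cat3form}. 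Its associated metric is $f^*g'=g$, since the metric is built naturally from the $3$-form.
\end{enumerate}
So $g$ is a $G_2$-metric. By Proposition \ref{equivalencetheorem}, the result is also isomorphic, as an octonion bundle, to the one built from $\varphi'$. Here the Pfister hypothesis at each point only serves to confirm fibrewise that $(\Lambda^0_p\oplus T_pM,(b_g)_p)$ carries a real octonion multiplication. Positive-definiteness makes that multiplication non-split, as discussed just before the proposition.

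\emph{Main obstacle.} The main obstacle is exactly this reverse direction. Fibrewise octonion multiplications compatible with $q_p$ exist at every point, but they need not glue to a smooth global section $\mathsf{m}$, and the pointwise condition gives no control over that gluing. The natural alternative route would be to choose such multiplications locally and patch them; I would avoid this, because the ambiguity at each point is the non-contractible space $\Orth(8)/G_2$, and patching leads straight back to the Stiefel--Whitney obstruction. The plan therefore sidesteps gluing altogether: it takes the global structure from Theorem \ref{g2obstr} and only corrects the metric by the smooth square root $h^{1/2}$. I would write the proof to state explicitly that the global hypothesis of Theorem \ref{maintheorem} is in force, since without it the reverse implication fails.
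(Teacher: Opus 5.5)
Your proof is correct, given the hypothesis $w_1=w_2=0$ that you import from Theorem \ref{maintheorem}. You are also right that this hypothesis cannot be dropped. On a non-orientable $7$-manifold such as $\BR\BP^6\times S^1$, every $q_p$ is positive definite, hence equal to $e_0^2+\dots+e_7^2$ in an orthonormal basis (the Pfister form with $\alpha=\beta=\gamma=1$), yet no $G_2$-structure exists.

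Your forward direction matches the paper's: both go through the octonion bundle of Theorem \ref{maintheorem} and the classification of norms of real octonion algebras as Pfister forms (Remark \ref{Pfisterformintro}).

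The routes differ on sufficiency, where the paper gives no separate argument. It shows that an octonion bundle structure ``only occurs if'' every $q_p$ is Pfister, and that positive-definiteness makes the fibre algebras division. It then declares the statement a local characterisation. This implicitly treats fibrewise existence of octonion multiplications as enough, and never asks whether they can be chosen smoothly — exactly the gluing issue you isolate.

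You instead argue globally. Theorem \ref{g2obstr} supplies some $\varphi'$. The $g'$-self-adjoint square root $f=h^{1/2}$ then transports it to $(f^{-1})_*\varphi'=\varphi'\circ(f\otimes f\otimes f)$, whose metric is $f^*g'=g$ by naturality of the associated metric.

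What your approach buys is an actual proof of the ``if'' direction. It also shows that the proposition amounts to the fact that on a $7$-manifold with $w_1=w_2=0$ every Riemannian metric is a $G_2$-metric, so the pointwise Pfister condition carries no information. The paper's framing stays within the octonion-algebra language but leaves that direction unjustified.
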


\begin{remark}
    A Pfister form
    \begin{align*}
        e_0^2+\alpha e_1^2+\beta e_2^2+\alpha\beta e_3^2+\gamma e_4^2+\alpha\gamma e_5^2+\beta\gamma e_6^2+\alpha\beta\gamma e_7^2
    \end{align*}
    is positive-definite if and only if $\alpha, \beta, \gamma>0$, so in fact the elements $\alpha, \beta$ and $\gamma$ need be positive in the above proposition.
\end{remark}

\section{Geometric twists are algebraic twists}\label{twistsection}

Throughout this section, let $(M, g)$ be a Riemannian manifold admitting a $G_2$-structure $\varphi$. We denote by $C_\varphi$ the octonion algebra over $C^\infty(M)$ given by taking sections of the octonion bundle associated to $\varphi$. We will often omit the multiplication map $m_{C_\varphi}$ and denote the bilinear form by $b_g$.

We use the correspondence of theorem \ref{maintheorem} to show that the twist of a $G_2$-structure, as in theorem \ref{Bryanttwist}, corresponds to the twist of an octonion algebra, as in theorem \ref{twistoctalg}. The idea can be realised by the diagram
\[\begin{tikzcd}
	\varphi & C \\
	{\tilde{\varphi}^A=\varphi_A} & {C^{A, A^{-1}}}
	\arrow[squiggly, tail reversed, from=1-1, to=1-2]
	\arrow["{\text{twist by }A}"', from=1-1, to=2-1]
	\arrow["{\text{twist by }A}", from=1-2, to=2-2]
	\arrow[squiggly, tail reversed, from=2-1, to=2-2]
\end{tikzcd}\]
The result for twists of $G_2$-structures and octonion bundles was originally shown by Grigorian in \cite[Remark 4.9.]{Gri}. We prove it for octonion algebras, mainly using the identities of proposition \ref{relationslemma}.

To state the result we introduce the notion of the algebraic twist of a $G_2$-structure:

\begin{definition}\label{algtwist}
    Let $A\in C_\varphi$ such that $b_g(A, A)=2$. We then call the 3-form $\varphi_A$ defined by
    \begin{align*}
        \varphi_A(\alpha, \beta, \gamma)=\varphi(\alpha A, A^{-1}\beta, \gamma)
    \end{align*}
    for $\alpha, \beta, \gamma\in\Gamma(TM)$ vector fields, the \emph{algebraic twist} of $\varphi$ by $A$. The products on the right hand side are the products as taken in $C_\varphi$.
\end{definition}

\begin{remark}
    Note that if one uses the fact that $\varphi(\alpha, \beta, \gamma)=\frac{1}{2}b_g(\alpha\beta, \gamma)$, then
    \begin{align*}
        \varphi_A(\alpha, \beta, \gamma)=\frac{1}{2}b_g((\alpha A)(A^{-1}\beta), \gamma).
    \end{align*}
    We will see that the algebraic twist of a $G_2$-structure is also a $G_2$-structure. This can be shown directly, using the fact that the resulting is a 3-form and that the statement holds on every fibre, but we will instead utilise theorem \ref{Bryanttwist}. We will refer to the twist $\tilde{\varphi}^A$ used in theorem \ref{Bryanttwist} as the \emph{geometric twist} of the $G_2$-structure $\varphi$ by $A\in C_\varphi$ satisfying $b_g(A, A)=2$. Note that this latter condition is equivalent to the condition stated in theorem \ref{Bryanttwist}, i.e., that $A_r^2+|A_i|^2=1$, where $|A_i|^2=g(A_i, A_i)$.
\end{remark}

\begin{theorem}\label{twistequality}
    Let $A\in C_\varphi$ with $A=A_r+A_i$ for $A_r\in\Gamma(\Lambda^0)$ and $A_i\in\Gamma(TM)$. Assume $b_g(A, A)=2$. Then
    \begin{align*}
        \varphi_A=\tilde{\varphi}^A.
    \end{align*}
\end{theorem}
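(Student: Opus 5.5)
Both sides of the claimed identity are $C^\infty(M)$-multilinear in $(\alpha,\beta,\gamma)$. The left side is built from $m$, $b_g$ and the fixed section $A$, and the right side from $\varphi$, $*\varphi$, $g$ and $A$. So it suffices to prove the equality fibrewise. At each $p\in M$, lemma \ref{octalgobjfibre} makes $(\Lambda^0_p\oplus T_pM, m_p, e_p, (b_g)_p)$ a real non-split octonion algebra, with $\varphi_p(x,y,z)=\tfrac12 b_p(xy,z)$ as in proposition \ref{varphi_Cconstr}. Write $A_p=a+v$ with $a\in\BR$, $v\in T_pM$ and $a^2+|v|^2=1$. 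Since $q(A)=1$, relation \ref{barmultrel} gives $A\bar A=1$, so $A^{-1}=\bar A=a-v$.

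First I expand the left side bilinearly for imaginary $x,y$:
\begin{align*}
(x(a+v))((a-v)y)=a^2\,xy+a\big((xv)y-x(vy)\big)-(xv)(vy).
\end{align*}
The first term gives $a^2\varphi(x,y,z)$ after pairing with $\tfrac12 b(-,z)$. The second term is $a$ times the associator $[x,v,y]$. Its pairing with $z$ is the standard multiple of $*\varphi(v,x,y,z)$, which should yield $-2a\,(v\lrcorner *\varphi)(x,y,z)$; I will fix the constant and sign by evaluating on basis vectors. The third term must produce $-|v|^2\varphi+2v^\flat\wedge(v\lrcorner\varphi)$.

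Rather than deriving the quadratic term from contraction identities for $\varphi$, I plan to use $G_2$-equivariance. Both sides transform in the same way under automorphisms of the fibre octonion algebra, which preserve $g$, $\varphi$, $*\varphi$, $m$ and conjugation. The automorphism group $G_2$ acts transitively on each sphere in the imaginary part. Hence I may assume $v=t\,e_1$ with $t\ge 0$ in the basis of proposition \ref{varphi_Cconstr}, whose multiplication table is available.

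Then each of the three terms is a fixed trilinear form times $a^2$, $at$ or $t^2$ respectively. It remains to check three identities of 3-forms:
\begin{align*}
\tfrac12 b\big((xe_1)y-x(e_1y),z\big)&=-2\,(e_1\lrcorner*\varphi_0)(x,y,z),\\
-\tfrac12 b\big((xe_1)(e_1y),z\big)&=-\varphi_0(x,y,z)+2\big(e^1\wedge(e_1\lrcorner\varphi_0)\big)(x,y,z).
\end{align*}
From the formulas for $\varphi_0$ and $*\varphi_0$ we read off $e_1\lrcorner*\varphi_0=e^{357}-e^{346}-e^{256}-e^{247}$ and $e^1\wedge(e_1\lrcorner\varphi_0)=e^{123}+e^{145}+e^{167}$. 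I then verify the identities on basis triples using the table. For the second identity, the alternative law gives $(xe_1)(e_1y)=-y$-type collapses when $x$ or $y$ equals $e_1$, and left/right multiplication by $e_1$ otherwise, which keeps the check short.

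The main obstacle is the associator term: getting the exact constant and sign relating $b([x,v,y],z)$ to $*\varphi$ under the paper's conventions. I would settle this by direct evaluation on one or two triples, for instance $(x,y,z)=(e_3,e_5,e_7)$. Finally, I note the sign ambiguity $A\mapsto -A$, which leaves both sides unchanged, as a consistency check, and conclude $\varphi_A=\tilde\varphi^A$ at every point, hence globally.
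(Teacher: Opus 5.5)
Your argument is correct, but it takes a different route from the paper's.

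Both proofs begin with the same three-term expansion of $\tfrac12 b_g\big((\alpha A)(\bar A\beta),\gamma\big)$. For the $A_r$-term you use the same input as the paper. The paper cites Grigorian's Lemma 3.6, which is exactly the statement fixing the multiple of $*\varphi$. Your sign is right: $[e_3,e_1,e_5]=-2e_7$, which gives $-2=-2\,{*\varphi_0}(e_1,e_3,e_5,e_7)$.

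The real difference is the quadratic term.
\begin{itemize}
\item The paper stays global in $C_\varphi$. It manipulates the term with the linearised composition law \ref{bilformmultcomm}, the relations \ref{3formrels} and \ref{masterformula}, and the reality of $A_i^2$. It rewrites $b_g(A_i,\gamma)\varphi(A_i,\alpha,\beta)$ until it cancels the other two terms of $2A_i^\flat\wedge(A_i\lrcorner\varphi)$.
\item You localise, use transitivity of $G_2$ on $S^6$ to set $A_i=te_1$, and read your $t^2$-identity off the multiplication table. That identity does hold. On $e_1^\perp$ it is just $\varphi_0(Jx,Jy,z)=-\varphi_0(x,y,z)$ for $J=L_{e_1}$.
\end{itemize}

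What each route buys:
\begin{itemize}
\item Yours is elementary and mechanical, and it avoids the paper's delicate sign bookkeeping. However, it is tied to the compact real model, namely the explicit table and transitivity of the automorphism group on spheres.
\item The paper's treatment of the quadratic term uses only identities valid in every octonion algebra over a ring containing $\tfrac12$. That fits the paper's aim of transporting algebraic identities, and its later claim about split and formal $G_2$-structures.
\end{itemize}

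Two points of rigour remain. First, you do not know in advance that $-\tfrac12 b\big((xe_1)(e_1y),z\big)$ is alternating, since the alternation of $\varphi_A$ is part of what is being proved. So check the $t^2$-identity on all ordered basis triples, not only on $i<j<k$. Second, for the $at$-identity, either check all triples, or state explicitly that $\langle[x,y,z],w\rangle$ is an alternating $G_2$-invariant 4-form and hence a multiple of $*\varphi_0$. Only then does evaluating at a single triple fix the constant.
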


\begin{proof}
    We prove this by evaluating the twists on a triple $\alpha, \beta, \gamma\in \Gamma(TM)$. First and foremost, note that $b_g(A, A)=2A\Bar{A}$ by the relation \ref{barmultrel}. This means that $A\Bar{A}=1$, so $\Bar{A}=A^{-1}$. Moreover, the product $A\Bar{A}=(A_r+A_i)(A_r-A_i)=A_r^2-A_i^2$. This in turn means that $A_i^2=A_r^2-1$ is real, i.e., $A_i^2\in\Gamma(\Lambda^0)$, a fact that will be used later. It is now possible to decompose the expression for $\varphi_A$:
    \begin{align*}
        \varphi_A(\alpha, \beta, \gamma)&=\frac{1}{2}b_g((\alpha A)(A^{-1}\beta), \gamma)\\
        &=\frac{1}{2}b_g((\alpha A_r+\alpha A_i)(A_r\beta-A_i\beta), \gamma)\\
        &=\frac{1}{2}b_g(A_r^2\alpha\beta-A_r\alpha(A_i\beta)+A_r(\alpha A_i)\beta-(\alpha A_i)(A_i\beta), \gamma)\\
        &=\frac{1}{2}A_r^2b_g(\alpha\beta, \gamma)+\frac{1}{2}A_r(b_g((\alpha A_i)\beta, \gamma)-\frac{1}{2}b_g(\alpha(A_i\beta), \gamma))-\frac{1}{2}b_g((\alpha A_i)(A_i\beta), \gamma)\\
        &=A_r^2\varphi(\alpha, \beta, \gamma)+A_r(\varphi(\alpha A_i, \beta, \gamma)-\varphi(\alpha, A_i\beta, \gamma))-\frac{1}{2}b_g((\alpha A_i)(A_i\beta), \gamma).
    \end{align*}
    The first terms of $\varphi_A(\alpha, \beta, \gamma)$ and $\tilde{\varphi}^A(\alpha, \beta, \gamma)$ are equal. Next, we compare the terms containing exactly one multiple of $A_r$.

    In the case of $\tilde{\varphi}^A(\alpha, \beta, \gamma)$, the $A_r$-term is
    \begin{align*}
        -2A_r(A_i\lrcorner(*\varphi))(\alpha, \beta, \gamma)=-2A_r(*\varphi)(A_i, \alpha, \beta, \gamma).
    \end{align*}
    The result \cite[Lemma 3.6]{Gri} states that
    \begin{align*}
        \alpha(\beta\gamma)-(\alpha\beta)\gamma=2(*\varphi)(-^\sharp, \alpha, \beta, \gamma)
    \end{align*}
    where $-^\sharp$ denotes the dual with respect to the extension of the metric $g$ to $\Gamma(\Lambda^0\oplus TM)$, i.e., $\frac{1}{2}b_g$. Precomposing with the dual with respect to $\frac{1}{2}b_g$, which in this case is denoted by $-^\flat$, we get that
    \begin{align*}
        \frac{1}{2}b_g(\alpha(\beta\gamma)-(\alpha\beta)\gamma, A_i)=2(*\varphi)(A_i, \alpha, \beta, \gamma)
    \end{align*}
    Since $(*\varphi)$ is a 4-form, a property of the Hodge dual, we get that
    \begin{align*}
        -A_r\cdot 2(*\varphi)(A_i, \alpha, \beta, \gamma)&=-2A_r(*\varphi)(\gamma, \alpha, A_i, \beta)\\
        &= -A_r\frac{1}{2}b_g( \alpha(A_i\beta)-(\alpha A_i)\beta, \gamma )=\\
        &=A_r(\varphi(\alpha A_i, \beta, \gamma)-\varphi(\alpha, A_i\beta, \gamma)).
    \end{align*}
    This is precisely the $A_r$-term of $\varphi_A(\alpha, \beta, \gamma)$. It now remains to show that the last terms match, i.e., that
    \begin{align}\label{lasttermeq}
        A_i^2\varphi(\alpha, \beta, \gamma)+2(A_i^\flat\wedge(A_i\lrcorner\varphi)(\alpha, \beta, \gamma))=-\frac{1}{2}b_g((\alpha A_i)(A_i\beta), \gamma).
    \end{align}
    This is done by calculating $2(A_i^\flat\wedge(A_i\lrcorner\varphi)(\alpha_1, \alpha_2, \alpha_3)$, where we let $\alpha_1=\alpha$, $\alpha_2=\beta$ and $\alpha_3=\gamma$;
    \begin{align*}
        2(A_i^\flat\wedge(A_i\lrcorner\varphi)(\alpha_1, \alpha_2, \alpha_3))&=2\frac{1}{1!2!}\sum_{\sigma\in\mathfrak{S}_3}A_i^\flat(\alpha_{\sigma(1)})\cdot\varphi(A_i, \alpha_{\sigma(2)}, \alpha_{\sigma(3)})\hspace{8mm} \text{definition of $A_i^\flat$}\\
        &=\frac{1}{2}(b_g( A_i, \alpha)\varphi(A_i, \beta, \gamma)-b_g(A_i, \alpha)\varphi(A_i, \gamma, \beta)\\
        &+b_g( A_i, \beta)\varphi(A_i, \gamma, \alpha)-b_g(A_i, \beta)\varphi(A_i, \alpha, \gamma)\\
        &+b_g( A_i, \gamma)\varphi(A_i, \alpha, \beta)-b_g(A_i, \gamma)\varphi(A_i, \beta, \alpha))\hspace{8mm} \text{$\varphi$ is a 3-form}\\
        &=b_g(A_i, \alpha)\varphi(A_i, \beta, \gamma)+b_g(A_i, \beta)\varphi(A_i, \gamma, \alpha)+b_g(A_i, \gamma)\varphi(A_i, \alpha, \beta).
    \end{align*}
    We inspect the last term of this expression by applying $\varphi(A_i, \alpha, \beta)=\frac{1}{2}b_g(A_i\alpha, \beta)$ and the formulae of lemma \ref{relationslemma}:
    \begin{align*}
        b_g(A_i, \gamma)\varphi(A_i, \alpha, \beta)&=\frac{1}{2}b_g(A_i, \gamma)b_g(A_i\alpha, \beta)   \hspace{8mm}\ref{octalglemma}\\
        &=\frac{1}{2}(b_g(A_i(A_i\alpha), \gamma\beta )+b_g(A_i\beta, \gamma(A_i\alpha)))   \\
        &=\frac{1}{2}(b_g(A_iA_i\alpha, \gamma\beta ) +b_g(A_i\beta, \gamma(A_i\alpha)))   \hspace{8mm}\text{$A_i^2$ real and \ref{3formrels}}\\
        &=\frac{1}{2}(A_i^2b_g( \alpha\Bar{\beta}, \gamma )+b_g( (A_i\beta)\overline{(\alpha A_i)}, \gamma ))   \hspace{8mm}\text{$\Bar{\alpha_k}=-\alpha_k$, $\overline{\alpha A_i}=\Bar{A_i}\Bar{\alpha}$}\\
        &=-A_i^2\varphi(\alpha, \beta, \gamma)+\frac{1}{2}b_g((A_i\beta)(\alpha A_i), \gamma)   \hspace{8mm}\ref{masterformula}\\
        &=-A_i^2\varphi(\alpha, \beta, \gamma)\\
        &+\frac{1}{2}b_g(-(\alpha A_i)(A_i\beta)+b_g(\alpha A_i, 1)(A_i\beta)\\
        &+b_g(A_i\beta, 1)(\alpha A_i)-b_g(\alpha A_i, A_i\beta), \gamma)   \hspace{8mm}\text{$b_g(1, \gamma)=0$}\\
        &=-A_i^2\varphi(\alpha, \beta, \gamma)-\frac{1}{2}b_g((\alpha A_i)(A_i\beta), \gamma)+\frac{1}{2}b_g( \alpha A_i, 1)b_g(A_i\beta, \gamma)\\
        &+\frac{1}{2}b_g(A_i\beta, 1)b_g(\alpha A_i, \gamma)   \hspace{8mm}\ref{3formrels}\\
        &=-A_i^2\varphi(\alpha, \beta, \gamma)-\frac{1}{2}b_g((\alpha A_i)(A_i\beta), \gamma)-\frac{1}{2}b_g(A_i, \alpha)b_g(A_i\beta, \gamma)\\
        &-\frac{1}{2}b_g(A_i, \beta)b_g(\alpha A_i, \gamma)
    \end{align*}
    The last term equals
    \begin{align*}
        -\frac{1}{2}b_g(A_i, \beta)b_g(\alpha A_i, \gamma)&=-b_g( A_i, \beta)\varphi(\alpha, A_i, \gamma)\\
        &=-b_g(A_i, \beta)\varphi(A_i, \gamma, \alpha).
    \end{align*}
    This then gives us that
    \begin{align}\label{thirdterm}
        b_g( A_i, \gamma)\varphi(A_i, \alpha, \beta)&=-A_i^2\varphi(\alpha, \beta, \gamma)-\frac{1}{2}b_g((\alpha A_i)(A_i\beta), \gamma)\notag\\
        &-b_g( A_i, \alpha)\varphi(A_i, \beta, \gamma)-b_g( A_i, \beta)\varphi(A_i, \gamma, \alpha).
    \end{align}
    Using this in the expression for $2(A_i^\flat\wedge(A_i\lrcorner\varphi))(\alpha, \beta, \gamma)$ we get that two terms are cancelled by those appearing in the above expression of $2g(A_i, \gamma)\varphi(A_i, \alpha, \beta)$, i.e.,
    \begin{align*}
        2(A_i^\flat\wedge(A_i\lrcorner\varphi)(\alpha, \beta, \gamma)&=b_g( A_i, \alpha)\varphi(A_i, \beta, \gamma)+b_g( A_i, \beta)\varphi(A_i, \gamma, \alpha)+b_g( A_i, \gamma)\varphi(A_i, \alpha, \beta)\hspace{6mm}\ref{thirdterm}\\
        &=b_g( A_i, \alpha)\varphi(A_i, \beta, \gamma)+b_g( A_i, \beta)\varphi(A_i, \gamma, \alpha)-A_i^2\varphi(\alpha, \beta, \gamma)\\
        &-\frac{1}{2}b_g((\alpha A_i)(A_i\beta), \gamma)-b_g( A_i, \alpha)\varphi(A_i, \beta, \gamma)-b_g( A_i, \beta)\varphi(A_i, \gamma, \alpha)\\
        &=-A_i^2\varphi(\alpha, \beta, \gamma)-\frac{1}{2}b_g((\alpha A_i)(A_i\beta), \gamma).
    \end{align*}
    This gives us the equality \ref{lasttermeq}. Since it constitutes equality of the last terms of $\tilde{\varphi}^A$ and $\varphi_A$ this concludes the proof.
\end{proof}

\begin{remark}
    This means that since the geometric twist $\tilde{\varphi}^A$ is a $G_2$-structure, the algebraic twist $\varphi_A$ is also a $G_2$-structure, and this holds for any $A\in C_\varphi$ with $b_g(A, A)=2$.

    Note that definition \ref{algtwist} of the algebraic twist of a $G_2$-structure is motivated by the correspondence \ref{maintheorem}:

    Consider the associated octonion algebra $C_\varphi$ with multiplication $\circ_\varphi$. The octonion algebra $C_\varphi^{A, A^{-1}}$ has the same underlying quadratic module as $C_\varphi$ but with multiplication defined by
    \begin{align*}
        B\circ_\varphi^A C=(B\circ_\varphi A)\circ_\varphi (A^{-1}\circ_\varphi C)
    \end{align*}
    and due to theorem \ref{maintheorem} this corresponds to the $G_2$-structure $\varphi_A$. This correspondence and theorem \ref{twistequality} gives us the following characterisation.
\end{remark}

\begin{corollary}
    For $A\in C_\varphi$ with $b_g(A, A)=2$ there is equality
    \begin{align*}
        C_{\tilde{\varphi}^A}=C_{\varphi_A}=C_\varphi^{A, A^{-1}}.
    \end{align*}
\end{corollary}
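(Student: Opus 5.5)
The plan is to prove the two equalities separately. The first, $C_{\tilde{\varphi}^A}=C_{\varphi_A}$, is formal. By theorem \ref{twistequality} we have $\varphi_A=\tilde{\varphi}^A$ as 3-forms. Theorem \ref{Bryanttwist} shows that $\tilde{\varphi}^A$ is a positive 3-form with associated metric $g$. Hence both sides are the octonion bundle attached to the same $G_2$-structure with metric $g$ under the correspondence of theorem \ref{maintheorem}. So $C_{\tilde{\varphi}^A}=C_{\varphi_A}$ as octonion algebras over $C^\infty(M)$, with the same underlying module $\Gamma(\Lambda^0\oplus TM)$, unit and form $b_g$.

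For the second equality, $C_{\varphi_A}=C_\varphi^{A,A^{-1}}$, I would first check that $C_\varphi^{A,A^{-1}}$ is an octonion bundle over $(M,g)$.

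\begin{itemize}
\item Its underlying quadratic module is $(\Gamma(\Lambda^0\oplus TM),b_g)$. By theorem \ref{twistoctalg} (with $q(A)=q(A^{-1})=1$, using $A^{-1}=\Bar A$ from relation \ref{barmultrel}), it is an octonion algebra.
\item Its unit is still $1$: $(1\cdot A)(A^{-1}y)=A(\Bar A y)=q(A)y=y$ by the left Moufang/alternative identity $x(\Bar x y)=q(x)y$. This identity holds fibrewise in real octonion algebras, so it holds globally by the argument of proposition \ref{relationslemma}. The right-hand case is symmetric.
\item It is globally non-split because $b_g$ is positive definite (corollary \ref{splitness}).
\end{itemize}

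Thus $C_\varphi^{A,A^{-1}}$ lies in $\catname{octbun}_{C^\infty(M)}$ over $(M,g)$.

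By theorem \ref{maintheorem} and its proof (remark \ref{pfrmk}, proposition \ref{varphi_Cconstr}), an octonion bundle over $(M,g)$ is determined by its 3-form $\tfrac12 b_g(m(\alpha\otimes\beta)\otimes\gamma)$, and the map back is injective (remark \ref{crossprodmultrmk}). For $C_\varphi^{A,A^{-1}}$ this 3-form is
\begin{align*}
\tfrac12 b_g((\alpha A)(A^{-1}\beta),\gamma)=\varphi_A(\alpha,\beta,\gamma)
\end{align*}
by definition \ref{algtwist} and the remark after it. Hence $C_\varphi^{A,A^{-1}}$ is the octonion bundle of $\varphi_A$, i.e. $C_\varphi^{A,A^{-1}}=C_{\varphi_A}$.

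The main obstacle is this last step: I must make sure that "the octonion bundle associated to a $G_2$-structure" is recovered from the 3-form alone. The multiplication of an octonion bundle with fixed $b_g$ and unit has to be determined by the restriction $\tfrac12 b_g(\alpha\beta,\gamma)$ on $TM$. This follows from relation \ref{masterformula}. Its real part gives $b_g(\alpha\beta,1)=-b_g(\alpha,\beta)$, and its imaginary part gives $\alpha\beta$ paired against all $\gamma$. Together with unitality, this fixes $m$ on all of $\Gamma(\Lambda^0\oplus TM)$. I would spell this out explicitly, since it is where the identification of algebras (rather than merely of 3-forms) is made.
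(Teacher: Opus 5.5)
Your proposal is correct and is essentially the paper's own argument. In the remark preceding the corollary, the paper observes that under theorem \ref{maintheorem} the twisted algebra $C_\varphi^{A,A^{-1}}$ has 3-form $\tfrac12 b_g((\alpha A)(A^{-1}\beta),\gamma)=\varphi_A$, and then combines this with theorem \ref{twistequality}. Your extra checks supply details the paper leaves implicit: that $1$ stays the unit via $A(\bar{A}y)=q(A)y$, that the twist is globally non-split, and that the multiplication is recovered from the 3-form. The only slip is that $b_g(\alpha\beta,1)=-b_g(\alpha,\beta)$ follows from relation \ref{3formrels} with $\bar{\alpha}=-\alpha$, not from \ref{masterformula}, which only gives the symmetrisation in $\alpha,\beta$.
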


\section{Generalisation: Octonion Algebras as $G_2$-structures}\label{generalisationsection}

In this section we will find in what way an arbitrary octonion algebra over $C^\infty(M)$ can be viewed as some kind of $G_2$-structure. The result of theorem \ref{maintheorem}, and in particular the equivalence of proposition \ref{equivalencetheorem}, taken together with the question asked in remarks \ref{elemoctalgrmk} and \ref{questionremark}, gives us the following possible generalisation of the notion of a $G_2$-structure:

\begin{definition}
    Let $M$ be a smooth manifold and let $E$ be a (smooth) rank 7 vector bundle over $M$. Then a \emph{$G_2$-structure on $E$} is a $G_2$-reduction of the frame bundle $F(E)$ of $E$.
\end{definition}

We know that these exist in the case $E=TM$ and $E=(\Lambda^0)^{\oplus 8}$. This motivates the following definition.

\begin{definition}[Category of generalised $G_2$-structures]\label{geng2catprinc}
    Let $M$ be a smooth connected manifold. Let the category denoted by $\catname{fG2Str}(M)$ consist of the following data:
    \begin{itemize}
        \item Objects: $G_2$-structures viewed as $G_2$-reductions of the frame bundle $F(E)$ of a rank 7 bundle $E$ on $M$.
        \item Morphisms: Let $P, P'$ be $G_2$-principal bundles with inclusions
        \begin{align*}
            P\hookrightarrow F(E), \hspace{6mm} P'\hookrightarrow F(E').
        \end{align*}
        We say that an isomorphism $\mathrm{f}:E\overset{\cong}{\to}E'$ is a morphism $P\to P'$ if $P'=\mathrm{f}P$.
    \end{itemize}
    We refer to it as the \emph{category of formal $G_2$-structures}.
\end{definition}

\begin{remark}
    We can also define a definite 3-form over any rank 7 bundle $E$ as an element $\varphi_E$ of $\bigwedge^3\Gamma(E^\vee)$ such that $(\varphi_E)_p$ is in the $\GL_7(\BR)$-orbit of $\varphi_0$ after choice of local basis. This also means that there is an analogous notion of category of formal definite 3-forms, with objects
    \begin{align*}
        \bigsqcup_{E\in\Vecbun_7(M)}\bigwedge_+^3\Gamma(E^\vee)
    \end{align*}
    and morphisms given by $C^\infty(M)$-linear isomorphisms $f:\Gamma(E)\overset{\cong}{\to}\Gamma(E')$.

    We show that the result of proposition \ref{G2definite3form} generalises:
\end{remark}

\begin{proposition}
    Let $M$ be a smooth manifold. Then the functor
    \begin{align*}
        \bigsqcup_{E\in\Vecbun_7(M)}\bigwedge_+^3\Gamma(E^\vee)&\longrightarrow \mathrm{ob}(\catname{fG2str}(M))\\
        \varphi_E&\longmapsto F_{\varphi_E}
    \end{align*}
    is an equivalence, where
    \begin{align*}
        (F_{\varphi_E})_p:=\{u\in \Hom(\BR^7, E_p) \; : \; u^*((\varphi_E)_p)=\varphi_0\}
    \end{align*}
\end{proposition}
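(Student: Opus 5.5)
The plan is to mirror the proof of proposition \ref{G2definite3form} together with proposition \ref{G2catequiv} and lemma \ref{G2catdefcoh}, now allowing the rank 7 bundle $E$ to vary. The statement should be read as: the assignment $\varphi_E\mapsto F_{\varphi_E}$ is a bijection onto the objects of $\catname{fG2Str}(M)$. It then upgrades to an isomorphism of categories from the category of formal definite 3-forms to $\catname{fG2Str}(M)$. Since both object sets are disjoint unions indexed by $E\in\Vecbun_7(M)$, and the construction never changes $E$, it suffices to work with one fixed rank 7 bundle $E$ at a time.

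\emph{First step: well-definedness.} Fix $E$ and a definite $\varphi_E\in\bigwedge^3_+\Gamma(E^\vee)$. For $p\in M$ choose $u_0\in\mathrm{Isom}(\BR^7,E_p)$ with $u_0^*((\varphi_E)_p)=\varphi_0$. Such a $u_0$ exists because $(\varphi_E)_p$ lies in the $\GL_7(\BR)$-orbit of $\varphi_0$ after a choice of basis. Any other $u$ in the fibre then satisfies $(u_0^{-1}u)^*\varphi_0=\varphi_0$, so $u_0^{-1}u\in G_2$. Hence $G_2$ acts freely and transitively on $(F_{\varphi_E})_p$ by precomposition. Note that $u$ must be invertible for this; any $u$ pulling $(\varphi_E)_p$ back to the nondegenerate $\varphi_0$ is automatically injective, hence an isomorphism.

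Local triviality comes from a local frame of $E$ over an open set $U$. In that frame $\varphi_E$ becomes a smooth map $U\to\bigwedge^3_+(\BR^7)^\vee\cong\GL_7(\BR)/G_2$, which is an open orbit. Since $\GL_7(\BR)\to\GL_7(\BR)/G_2$ is a principal $G_2$-bundle, it admits smooth local sections. Pulling back along this map gives a smooth local section $s$ of $F_{\varphi_E}$ over $U$, and $(x,g)\mapsto s(x)g$ trivialises $F_{\varphi_E}$ as a subbundle of $F(E)$.

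\emph{Second step: bijectivity on objects.} Injectivity holds because $\varphi_E$ is recovered fibrewise as $(\varphi_E)_p=(u^{-1})^*\varphi_0$ for any $u\in(F_{\varphi_E})_p$. This is independent of $u$ because $G_2$ fixes $\varphi_0$. For surjectivity, let $P\hookrightarrow F(E)$ be a $G_2$-reduction and define $(\varphi_E)_p:=(u^{-1})^*\varphi_0$ for $u\in P_p$. This is well defined for the same reason, and it is smooth because it is computed from smooth local sections of $P$. It lies pointwise in the orbit of $\varphi_0$, and its associated bundle $F_{\varphi_E}$ contains $P$. Two $G_2$-torsors with one contained in the other, both inside $F(E)_p$, must coincide, so $F_{\varphi_E}=P$.

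\emph{Third step: morphisms.} Here I would repeat the computation of lemma \ref{G2catdefcoh} with source and target bundles $E,E'$. For $\mathsf f:E\to E'$ an isomorphism and $f=\Gamma(\mathsf f)$, define the pushforward $f_*\varphi_E:=\varphi_E\circ(f^{-1})^{\otimes3}$. Then
\begin{align*}
u\in (F_{f_*\varphi_E})_p\iff (\mathsf f_p^{-1}u)^*(\varphi_E)_p=\varphi_0,
\end{align*}
so $\mathsf f F_{\varphi_E}=F_{f_*\varphi_E}$. Combined with the Serre--Swan correspondence between bundle isomorphisms $E\to E'$ and module isomorphisms $\Gamma(E)\to\Gamma(E')$ (theorem \ref{nesth1}), this identifies the hom-sets on both sides, exactly as in proposition \ref{G2catequiv}.

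I expect the main obstacle to be the smoothness and local triviality in the first step. Everything else is fibrewise linear algebra. That step needs the fact that $\bigwedge^3_+$ is an open homogeneous space $\GL_7(\BR)/G_2$ with smooth local sections, which I would cite from \cite{Bry} rather than reprove.
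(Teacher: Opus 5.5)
Your proposal is correct and takes the same route as the paper. The paper's proof just says the argument is analogous to proposition \ref{G2catequiv}, using generalised versions of lemma \ref{G2catdefcoh} and proposition \ref{G2definite3form}, and that is the generalisation you carry out for a varying rank 7 bundle $E$. You also supply details the paper leaves implicit: the automatic invertibility of $u$, local triviality via local sections of $\GL_7(\BR)\to\GL_7(\BR)/G_2$, and an explicit inverse on objects in place of the appeal to Bryant's remark.
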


\begin{proof}
    The proof is analogous to the proof of proposition \ref{G2catequiv}, and uses the generalised statements of lemma \ref{G2catdefcoh} and proposition \ref{G2definite3form} which are also proven analogously.
\end{proof}

\begin{remark}
    This is the result needed to prove that there is an equivalence between the category of formal $G_2$-structures and the category of globally non-split octonion algebra objects:
\end{remark}

\begin{theorem}\label{generalequiv}
    There is an equivalence of categories
    \begin{align*}
        \Octalg_{C^\infty(M)}^+&\longrightarrow\catname{fG2str}(M)\\
        (C, b, m, e)&\longmapsto \varphi_C:=\frac{1}{2}b(m(-\otimes-)\otimes-)\\
        [f:C\overset{\cong}{\to}C']&\longmapsto [f\mid_{C\ominus eC^\infty(M)}:C\ominus eC^\infty(M)\overset{\cong}{\to}C'\ominus e'C^\infty(M)]
    \end{align*}
    where $\Octalg_{C^\infty(M)}^+$ denotes the category of globally non-split octonion algebras over $C^\infty(M)$.
\end{theorem}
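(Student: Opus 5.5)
We mirror the proofs of Theorem \ref{maintheorem} and Proposition \ref{equivalencetheorem}, replacing $TM$ by the rank 7 bundle $E$ obtained as the orthogonal complement of the identity. Throughout we work in $\catname{fG2str}(M)$ via the category of formal definite 3-forms, using the preceding proposition.

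\emph{Step 1 (the functor on objects).} Let $(C,b,m,e)$ be a globally non-split octonion algebra over $C^\infty(M)$ with octonion algebra object $(F,\mathsf{m},\mathsf{e},\mathsf{b})=\Gamma^{-1}(C)$ from Proposition \ref{octalgcatobjequivalence}. Since $\mathsf{b}_p(\mathsf{e}_p,\mathsf{e}_p)=2\neq 0$ at every point, the orthogonal complement $E:=\mathsf{e}(\Lambda^0)^{\perp}$ is a smooth rank 7 subbundle with $F=\Lambda^0\oplus E$. On sections this is $C\ominus eC^\infty(M)=\{x: b(x,e)=0\}$, the kernel of the split surjection $x\mapsto\tfrac12 b(x,e)$, hence projective of rank 7. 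Relation \eqref{masterformula} and the definition of the involution give $\bar x=-x$ for $x\in\Gamma(E)$. The computation in Proposition \ref{varphi_Cconstr}, which uses only \eqref{3formrels}, then shows that $\varphi_C$ restricted to $\Gamma(E)^{\otimes 3}$ is alternating, so $\varphi_C\in\bigwedge^3\Gamma(E^\vee)$.

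For definiteness, fix $p\in M$. By Lemma \ref{octalgobjfibre} and global non-splitness, $(F_p,\mathsf{m}_p,\mathsf{e}_p,\mathsf{b}_p)$ is the real division octonion algebra, and $E_p$ is its imaginary part. A basis as in the multiplication table of Proposition \ref{varphi_Cconstr} puts $(\varphi_C)_p$ into the form $\varphi_0$. Hence $\varphi_C\in\bigwedge^3_+\Gamma(E^\vee)$ and defines a formal $G_2$-structure $F_{\varphi_C}$.

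\emph{Step 2 (morphisms).} A morphism $f:C\to C'$ satisfies $f(e)=e'$ and $b'=f_*b$, so $f$ maps $e^\perp$ isomorphically onto $e'^\perp$. The calculation \eqref{pushfequivcalc} applies verbatim and gives $\varphi_{C'}=(f|)_*\varphi_C$. Functoriality is immediate, since restriction commutes with composition.

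For faithfulness, note that $f$ is determined by $f|_{\Gamma(E)}$ together with $f(e)=e'$. For fullness, let $h:\Gamma(E)\to\Gamma(E')$ satisfy $h_*\varphi_C=\varphi_{C'}$, and set $f=\id\oplus h$. We must show $f$ preserves $b$ and $m$. The key point is that the metric on $E_p$ is determined by the definite form $(\varphi_C)_p$: it is the unique metric for which $\varphi_C$ lies in the standard $G_2$-orbit. Since $b|_E$ equals twice this associated metric, $h$ pushing $\varphi_C$ to $\varphi_{C'}$ forces $h_*b|_E=b'|_{E'}$.

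Now decompose $x=a+\alpha$ and $y=c+\beta$. The product is then determined by $b$ and $\varphi$ through
\[
xy=ac-\tfrac12 b(\alpha,\beta)+a\beta+c\alpha+\alpha\times\beta,
\]
where $\tfrac12 b(\alpha\times\beta,\gamma)=\varphi(\alpha,\beta,\gamma)$. This formula follows fibrewise from \eqref{masterformula}: it gives $\alpha\beta+\beta\alpha=-b(\alpha,\beta)$, and the real part of $\alpha\beta$ is $\tfrac12 b(\alpha\beta,1)=-\tfrac12 b(\alpha,\beta)$. Therefore $f_*m=m'$, exactly as in the second half of the proof of Proposition \ref{equivalencetheorem}, and the restriction map on Hom-sets is bijective.

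\emph{Step 3 (essential surjectivity).} Given a rank 7 bundle $E$ and $\varphi_E\in\bigwedge^3_+\Gamma(E^\vee)$, let $g_E$ be the fibrewise associated metric. It is smooth because it is given by Bryant's algebraic formula in $\varphi_E$ (or, more concretely, by the $G_2$-reduction). Define on $\Lambda^0\oplus E$ the form $b=2(\,\cdot\,\oplus g_E)$ and the product of Construction \ref{g2octbunconstr}, with $\times_\varphi$ defined from $g_E$. On each fibre, in a frame from $F_{\varphi_E}$, this is the standard octonion table, so every diagram of Definition \ref{octalgobjdef} commutes pointwise and hence globally. The result is globally non-split, and by Proposition \ref{octalgcatobjequivalence} it gives an object of $\Octalg^+_{C^\infty(M)}$. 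Remark \ref{crossprodmultrmk} shows that its associated 3-form is $\varphi_E$, with $E$ recovered exactly as $e^\perp$.

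The main obstacle is the fullness step in Step 2: one must justify that a bundle isomorphism preserving $\varphi$ automatically preserves the metric. I would argue this pointwise, using that the stabiliser of $\varphi_0$ in $\GL_7(\BR)$ is $G_2\subset\SO_7$. Concretely, choose frames $u\in(F_{\varphi_C})_p$ and $u'\in(F_{\varphi_{C'}})_p$. Then $u'^{-1}h_pu$ fixes $\varphi_0$, so it lies in $G_2$ and is therefore orthogonal.
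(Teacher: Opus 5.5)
Your proposal is correct and takes the same route as the paper, which also mirrors Theorem~\ref{maintheorem} and Proposition~\ref{equivalencetheorem}: Grigorian's product gives the inverse on objects, and the pushforward calculation from the proof of Proposition~\ref{equivalencetheorem} handles morphisms. Your version is more complete than the paper's sketch, because you justify two points needed for fullness that the paper only asserts: an isomorphism $h$ with $h_*\varphi_C=\varphi_{C'}$ is automatically an isometry (its fibrewise conjugates lie in the stabiliser $G_2\subset\SO_7(\BR)$), and the product is determined by $b$ and $\varphi$ via \eqref{masterformula}.
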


\begin{proof}
    Fixing a vector bundle $E$ we begin by showing that there is a 1:1-correspondence between the $G_2$-structures on $E$ and the non-split octonion algebras with underlying module $C^\infty(M)\oplus \Gamma(E)$ (every octonion algebra will have a module of this form due to the existence of identity, see remark \ref{questionremark}). The inverse is, as before, given by
    \begin{align*}
        m_{\varphi_E}(A, D):=ad-g_\varphi(\alpha, \delta)+a\delta+d\alpha+(\varphi_E(\alpha, \delta, -))^\flat.
    \end{align*}
    and the rest consists of verifying that for any isomorphism $f:C\to C'$ of globally non-split octonion algebras there is an equality
    \begin{align*}
        f_*\varphi_C(\alpha, \beta, \gamma)=\frac{1}{2}f_*b_C(f_*m_C(\alpha\otimes\beta)\otimes\gamma).
    \end{align*}
    This is symbolically identical to the calculation in \ref{pushfequivcalc} which concludes the proof.
\end{proof}

\begin{remark}
    With this, it is reasonable to ask whether the obstructions to the existence of a $G_2$-structure on a vector bundle $E$ are analogous to those of $G_2$-structures as defined in \ref{gstruct} (i.e., the $G_2$-structures over the tangent bundle $TM$). In particular, we can state the following:
\end{remark}

\begin{proposition}
    Let $E$ be a vector bundle of rank 7 on $M$. The Stiefel-Whitney classes $w_1(E)$ and $w_2(E)$ of the vector bundle $E$ vanish if and only if there exists a $G_2$-structure on $E$.
\end{proposition}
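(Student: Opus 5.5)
The plan is to reduce the statement to classical obstruction theory for reductions of structure group, via the chain of subgroups $G_2\subset\mathrm{Spin}(7)\to\SO_7(\BR)\subset\Orth_7(\BR)\subset\GL_7(\BR)$. The proof mirrors the argument behind Theorem \ref{g2obstr}, with $TM$ replaced by an arbitrary rank 7 bundle $E$. Nothing in that argument uses that $E$ is the tangent bundle.

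\emph{Step 1: reduce to $\SO_7$.} Since $\GL_7(\BR)/\Orth_7(\BR)$ is contractible, $F(E)$ always reduces to $\Orth_7(\BR)$: choose a fibre metric on $E$ by a partition of unity. A further reduction to $\SO_7(\BR)$ exists if and only if $E$ is orientable, that is, if and only if $w_1(E)=0$.

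\emph{Step 2: pass from $\SO_7$ to $G_2$.} Here I would use two facts. First, $G_2$ is simply connected, so the inclusion $G_2\hookrightarrow\SO_7(\BR)$ lifts to $G_2\hookrightarrow\mathrm{Spin}(7)$. Second, $\mathrm{Spin}(7)$ acts transitively on the unit sphere $\BS^7$ of the spin representation, with stabiliser $G_2$, so $\mathrm{Spin}(7)/G_2\cong\BS^7$. This is the Lie group counterpart of the group scheme quotient $\mathrm{Spin}(q_C)/\Aut(C)\cong\BS^7$ recalled before Theorem \ref{twistoctalg}.

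\emph{Direction ($\Leftarrow$).} Assume $w_1(E)=w_2(E)=0$. Then the oriented frame bundle lifts to a $\mathrm{Spin}(7)$-bundle $\tilde P$, since $w_2$ is exactly the obstruction to such a lift. A reduction of $\tilde P$ to $G_2$ is the same as a section of the associated bundle $\tilde P\times_{\mathrm{Spin}(7)}\BS^7$, which is the unit sphere bundle of the rank 8 real spinor bundle $S\to M$.
\begin{itemize}
\item If $\dim M\le 7$, as in the main case of interest, the obstructions to a section lie in $H^{k+1}(M;\pi_k(\BS^7))$. These vanish for $k<7$, and the top obstruction is the Euler class $e(S)\in H^8(M;\BZ)$, which is zero for dimension reasons. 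So a nowhere vanishing spinor exists.
\item If $\dim M$ is arbitrary, the statement as written needs the hypothesis that $S$ has a nowhere zero section, i.e. essentially $e(S)=0$. I would handle this by first checking which generality the paper intends, and if needed stating the dimension hypothesis $\dim M\le 7$ explicitly. I expect this to be the main obstacle: the claim is sharp only in low dimension, and the proof has to say precisely where the dimension enters.
\end{itemize}
Finally, the stabiliser $G_2$-subbundle of $\tilde P$ maps isomorphically into $F(E)$, because $G_2\to\SO_7(\BR)$ is injective. This gives a $G_2$-structure on $E$.

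\emph{Direction ($\Rightarrow$).} Assume a $G_2$-reduction $P\subset F(E)$ exists. Since $G_2\subset\SO_7(\BR)$, the bundle $E$ is orientable, so $w_1(E)=0$. Since $G_2\to\SO_7(\BR)$ factors through $\mathrm{Spin}(7)$, the bundle $P\times_{G_2}\mathrm{Spin}(7)$ is a spin structure on $E$, so $w_2(E)=0$.

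As a cross-check in the paper's own language, Theorem \ref{generalequiv} lets one restate the conclusion: a nowhere vanishing unit spinor produces a globally non-split octonion algebra on $C^\infty(M)\oplus\Gamma(E)$, via the fibrewise identification $S_p\cong\BO$.
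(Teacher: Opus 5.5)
Your argument is correct (with $\dim M\le 7$), and it takes a different route from the paper. The paper does no obstruction theory itself. It cites Gray's Theorems (3.1) and (3.2) on vector cross products of type III, identifies such a cross product on $E$ with a non-split octonion multiplication on $\Lambda^0\oplus E$ with the real part dropped, and then passes to $G_2$-structures through the equivalence of Theorem \ref{generalequiv}. So the paper keeps the statement inside its octonion-algebra framework and outsources all of the topology.

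You instead work directly with $G_2\subset\mathrm{Spin}(7)\to\SO_7(\BR)$ and the sphere bundle $\tilde P/G_2\cong\BS(S)$. Your route is self-contained modulo standard facts, and it shows exactly where the dimension of the base enters. Two details are worth writing out:
\begin{itemize}
\item The stabiliser of a unit spinor omits $-1\in\mathrm{Spin}(7)$, since $-1$ acts as $-1$ on $S$. So it maps isomorphically onto a subgroup of $\SO_7(\BR)$ conjugate to the standard $G_2$, and your reduction can then be conjugated to one in the paper's sense.
\item For $\dim M\le 7$, transversality already gives a nowhere-vanishing section of the rank $8$ bundle $S$, so obstruction theory is not needed.
\end{itemize}

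Your concern about the dimension is not a technicality to be checked. The statement as written, for an arbitrary smooth $M$, is false once $\dim M\ge 8$, so $\dim M\le 7$ must be an explicit hypothesis. Take $M=\BS^8$. Every rank $7$ bundle there has $w_1=w_2=0$, and oriented rank $7$ bundles are classified by $\pi_7(\SO_7(\BR))\cong\BZ$. But $G_2$-bundles over $\BS^8$ are classified by $\pi_7(G_2)=0$, so every nontrivial rank $7$ bundle on $\BS^8$ admits no $G_2$-structure.

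In your Euler-class language: the sequence $G_2\to\mathrm{Spin}(7)\to\BS^7$ forces $\pi_7(\mathrm{Spin}(7))\to\pi_7(\BS^7)$ to be multiplication by $\pm 3$, so $e(S)\neq 0$ for these bundles.

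The paper's proof puts no restriction on $\dim M$, so on this point your version is the more accurate one. The paper's applications concern $E=TM$ with $\dim M=7$, where both arguments apply.
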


\begin{proof}
    This is seen to be the case by inspecting the proof of this obstruction, a source of which is given by \cite{gray_vector_1969}. In particular, Gray states in \cite[Theorem (3.1)]{gray_vector_1969} that if a vector cross product of type III on a vector bundle $E\overset{p}{\to}M$ of rank 7 arises then the Stiefel-Whitney classes
    \begin{align*}
        w_1(E)\in H^1(M;\BZ/2\BZ), \; w_2(E)\in H^2(M;\BZ/2\BZ), \; W_3(E)\in H^3(M;\BZ), \; W_5(E)\in H^5(M;\BZ)
    \end{align*}
    vanish. This cross-product corresponds to the octonion multiplication on the bundle $\Lambda^0\oplus E$ by linear extension since, in \cite{gray_vector_1969}, a cross product of type III is defined as the multiplication of a non-split composition algebra of dimension 8 with the real part omitted.

    Together with \cite[Theorem (3.2)]{gray_vector_1969} this means that $F=\Lambda^0\oplus E\overset{\pi\oplus p}{\to}M$ is an octonion algebra object if and only if
    \begin{align*}
        w_1(F)=0 \;\text{ and } \; w_2(F)=0.
    \end{align*}
    In other words, using \ref{generalequiv}, it is indeed the case that the vanishing of $w_1$ and $w_2$ is the obstruction for a vector bundle admitting a $G_2$-structure.
\end{proof}

\begin{remark}
    Further elaborating on possible generalisations of the equivalence between general notions of $G_2$-structures and octonion algebras, corollary \ref{splitness} gives us that there is a decomposition of the category of octonion algebras into full subcategories the following way:
    \begin{align*}
        \Octalg_{C^\infty(M)}=\Octalg_{C^\infty(M)}^+\sqcup\Octalg_{C^\infty(M)}^-.
    \end{align*}
    Here $\Octalg_{C^\infty(M)}^-$ denotes the category of globally split octonion algebras. These subcategories can be studied in their own right, since there exists no morphism from a globally non-split octonion algebra to a globally split octonion algebra (or vice versa) since the signature of a bilinear form is preserved under isomorphism.

    Using this perspective on the other side of the equivalence we see that it is possible to define a split $G_2$-structure on a vector bundle $E$ of rank 7 as a 3-form $\sigma\in\bigwedge^3\Gamma(E)^\vee$ which is locally in the $\GL_7(\BR)$-orbit of
    \begin{align*}
        \sigma_0=e^{123}-e^{145}-e^{167}-e^{246}+e^{257}-e^{347}-e^{356}.
    \end{align*}
    This will be fixed by the split version, sometimes denoted by $G_2^*$, of the real group of type $G_2$, which is non-compact. Denoting the category of split formal $G_2$-structures by $\catname{fG2*str}(M)$ can now state the most general version of the equivalence theorem:
\end{remark}

\begin{theorem}
    There is an equivalence of categories
    \begin{align*}
        \Octalg_{C^\infty(M)}&\overset{\sim}{\longrightarrow}\catname{fG2str}(M)\sqcup\catname{fG2*str}(M)
    \end{align*}
\end{theorem}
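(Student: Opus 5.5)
The plan is to split both sides into their non-split and split parts and to reuse Theorem \ref{generalequiv} for the non-split part. By Corollary \ref{splitness} and the remark after it, $\Octalg_{C^\infty(M)}=\Octalg^+_{C^\infty(M)}\sqcup\Octalg^-_{C^\infty(M)}$ as a disjoint union of full subcategories. There are no morphisms between the two pieces, because an isomorphism of octonion algebras pushes $b$ forward. Hence $b_p$ and $b'_p$ are isometric at every $p$, and Sylvester's law forbids a positive-definite form from being isometric to an indefinite one. On the target, $\catname{fG2str}(M)\sqcup\catname{fG2*str}(M)$ is a coproduct of categories by definition. An equivalence of coproducts can therefore be assembled summand by summand. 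Since Theorem \ref{generalequiv} already gives $\Octalg^+_{C^\infty(M)}\simeq\catname{fG2str}(M)$, it remains to prove $\Octalg^-_{C^\infty(M)}\simeq\catname{fG2*str}(M)$.

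For the split case I would copy the proof of Theorem \ref{generalequiv}. The functor sends $(C,b,m,e)$ to $E:=C\ominus eC^\infty(M)$, the $b$-orthogonal complement of the identity. This is a rank 7 projective module, equivalently a rank 7 bundle, and $b$ is nondegenerate on it because $b(e,e)=2$ is a unit. It carries the 3-form $\sigma_C:=\tfrac12 b(m(-\otimes-)\otimes-)$, and a morphism $f$ is sent to $f|_E$. The argument in Proposition \ref{varphi_Cconstr} uses only the relations of Proposition \ref{relationslemma} and $\bar\alpha=-\alpha$ on $E$. Both hold in any octonion algebra, so $\sigma_C$ is alternating. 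Functoriality, $\sigma_{C'}=(f|_E)_*\sigma_C$, is the calculation \eqref{pushfequivcalc} verbatim.

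For the inverse, I would start from a split definite 3-form $\sigma$ on $E$. Here $\sigma$ determines a metric $g_\sigma$ of signature $(3,4)$, exactly as $\varphi_0$ determines the Euclidean metric; this is classical for $G_2^*$. On $\Lambda^0\oplus E$ I would define
\[m_\sigma(a+\alpha,d+\delta)=ad-g_\sigma(\alpha,\delta)+a\delta+d\alpha+\sigma(\alpha,\delta,-)^\sharp,\]
with $b=2ad+2g_\sigma(\alpha,\delta)$. All the defining diagrams of an octonion algebra object can be checked fibrewise. In a local basis putting $\sigma_p$ into the normal form $\sigma_0$, the fibre becomes the real split octonions, and the diagrams commute there. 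The two constructions are mutually inverse by the argument of Remark \ref{crossprodmultrmk}, which recovers $\sigma$ from $m_\sigma$.

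The main obstacle is showing that $\sigma_C$ is a split definite form, i.e.\ pointwise in the $\GL_7(\BR)$-orbit of $\sigma_0$. I would work fibrewise, as in the proof of Proposition \ref{varphi_Cconstr}. By Lemma \ref{octalgobjfibre}, $C_p$ is a real octonion algebra with indefinite form. By Remark \ref{uniqueoctalg/field} there is only one split real octonion algebra up to isomorphism, so $C_p$ has a standard basis of the split octonions. I would then compute $\sigma_p$ on that basis and compare it with $\sigma_0$. Should the printed $\sigma_0$ fail to match exactly, a sign change of some basis vectors, i.e.\ a $\GL_7$ element, should repair it. I expect the real content to be this table check together with the normalisation of the factor $\tfrac12$ and of $g_\sigma$ from $b$.
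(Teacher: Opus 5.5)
Your plan follows the paper's own route. Its proof is the single sentence that the theorem ``is proven again in the same way'': decompose via Corollary~\ref{splitness}, apply Theorem~\ref{generalequiv} to $\Octalg^+_{C^\infty(M)}$, and repeat that proof for $\Octalg^-_{C^\infty(M)}$. The gap is at exactly the step you flag as the crux, and the fallback you propose there is false. The printed form
\[
\sigma_0=e^{123}-e^{145}-e^{167}-e^{246}+e^{257}-e^{347}-e^{356}
\]
equals $D^*\varphi_0$ for $D=\mathrm{diag}(-1,-1,1,1,1,1,1)\in\SO_7(\BR)$: flipping $e_1$ and $e_2$ changes the signs of precisely $e^{145},e^{167},e^{246},e^{257}$. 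So $\sigma_0$ lies in the \emph{compact} orbit. Its stabiliser is $DG_2D$ and its associated metric is positive definite.

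For a globally split $C$ the picture at a point $p$ is different. The stabiliser of $(\sigma_C)_p$ in $\GL(E_p)$ contains the closed non-compact subgroup $\Aut(C_p)\cong G_2^*$, and $\tfrac12 b_p|_{E_p}$ is indefinite. Points of a single $\GL_7(\BR)$-orbit have conjugate stabilisers. Hence no element of $\GL_7(\BR)$, and in particular no sign change of basis vectors, carries $(\sigma_C)_p$ to $\sigma_0$. Your table check will fail. So will your claim, in the inverse construction, that a $\sigma$ locally equivalent to $\sigma_0$ has a metric of signature $(3,4)$. With the literal definition, $\catname{fG2*str}(M)$ is just a second copy of the definite $3$-forms, and your functor $C\mapsto(E,\sigma_C)$ does not land in it for split $C$.

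The repair is to the definition, not to your argument. Take $\sigma_0=e^{123}-e^{145}-e^{167}-e^{246}+e^{257}+e^{347}+e^{356}$, which is what the split octonions actually produce. To see this, substitute $e_j\mapsto\sqrt{-1}\,e_j$ for $j=4,\dots,7$ in the compact table. Because $\mathrm{span}(e_0,e_1,e_2,e_3)$ is a quaternion subalgebra, all structure constants stay real. The substitution gives $\tfrac12 b=\mathrm{diag}(1;1,1,1,-1,-1,-1,-1)$ and negates every term of $\varphi_0$ except $e^{123}$.

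With this $\sigma_0$ your proof goes through, provided you add two short points.
\begin{enumerate}
\item Remark~\ref{crossprodmultrmk} only shows that one of the two composites is the identity. For the other, note that $m$ is recovered from $b|_E$ and $\sigma_C$. The real part of $\alpha\delta$ is $-\tfrac12 b(\alpha,\delta)$ by \eqref{3formrels}, and nondegeneracy of $b|_E$ then fixes the imaginary part.
\item The metric attached to a split $\sigma$ must be normalised with the sign that gives $g_{\sigma_C}=\tfrac12 b|_E$. With the opposite sign, $a^2+g_\sigma(\alpha,\alpha)$ has signature $(5,3)$, which is not an octonion norm. Naturality of $\sigma\mapsto g_\sigma$ then also gives fullness: any $f$ with $f_*\sigma_C=\sigma_{C'}$ preserves $b$.
\end{enumerate}
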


\begin{remark}
    This is proven again in the same way. We now have that any octonion algebra over the ring $C^\infty(M)$, for $M$ a smooth manifold, can be realised as a formal either split or non-split $G_2$-structure. Due to this equivalence, in fact only using the correspondence on the level of objects, we get that the classification of split $G_2$-structures is the same as that of non-split $G_2$-structures, by passing to the associated octonion algebra. In particular, we get that if $\varphi_C$ is any split or non-split $G_2$-structure with associated octonion algebra $C_\varphi=C$, any other such $G_2$-structure lying in the same metric class as $\varphi_C$ is realised as the twist $\varphi_{C, A}$ defined by
    \begin{align*}
        \varphi_{C, A}(B, C, D)=\varphi_C(BA, A^{-1}C, D)
    \end{align*}
    by some octonion $A\in C$ with $b_{C_\varphi}(A, A)=2$.
\end{remark}

\begin{example}
    An instance of a formal split $G_2$-structure is given by the trivial bundle $(\Lambda^0)^{\oplus8}$. We can define a 3-form $\sigma$ on the bundle by picking a basis and letting it remain constant, so that we can identify $\sigma_0$ with $\sigma$ everywhere. This is equivalent to giving $(\Lambda^0)^{\oplus8}_p$ the structure of the standard split octonions for every point $p\in M$.
\end{example}

\printbibliography

\end{document}